\documentclass{article}

% =========================================================
% 1. CODIFICA, FONT E LINGUA
% =========================================================
\usepackage[utf8]{inputenc}
\usepackage[T1]{fontenc}
\usepackage[english]{babel}
\usepackage{mathpazo} % Font Palatino
\usepackage{sansmath} % Font sans-serif per la matematica

% =========================================================
% 2. LAYOUT E FORMATTAZIONE TESTO
% =========================================================
\usepackage[left=2.3cm, right=2.3cm, top=3cm, bottom=3cm]{geometry}
\usepackage[small, sc]{titlesec}
\usepackage{placeins}
\usepackage{environ}
\usepackage{verbatimbox}
\usepackage{longtable}
\usepackage{caption}
\usepackage{subcaption}
\usepackage[active]{srcltx} % Ricerca inversa PDF->TeX

% =========================================================
% 3. MATEMATICA (Caricamento base)
% =========================================================
\usepackage{amsmath}
\usepackage{amssymb}
\usepackage{amsthm}      % Fondamentale caricare prima di aliascnt/hyperref
\usepackage{mathtools}
\usepackage{empheq}
\usepackage{bigints}
\usepackage{mathrsfs}
\usepackage{cancel}
\usepackage{bm}          % Grassetto matematico
\usepackage{xfrac}

% =========================================================
% 4. GRAFICA E TIKZ
% =========================================================
\usepackage{graphicx}
\usepackage{xcolor}
\usepackage{colorspace}
\usepackage{tikz}
\usepackage{pgfplots}
\usepackage{tkz-euclide}

% Impostazioni TikZ/PGF
\pgfplotsset{compat=1.10}
\usepgfplotslibrary{fillbetween}
\usetikzlibrary{patterns, intersections, calc, angles, arrows, automata, positioning, through, shadings}
\pgfdeclarelayer{pre main}

% Command per scalare tikz picture
\makeatletter
\newsavebox{\measure@tikzpicture}
\NewEnviron{scaletikzpicturetowidth}[1]{%
	\def\tikz@width{#1}%
	\begin{lrbox}{\measure@tikzpicture}%
		\BODY
	\end{lrbox}%
	\pgfmathparse{#1/\wd\measure@tikzpicture}%
	\BODY
}
\makeatother

\tikzset{
	state/.style={
		rectangle,
		rounded corners,
		draw=black, very thick,
		minimum height=2em,
		inner sep=2pt,
		text centered,
	},
}

% =========================================================
% 5. TEOREMI E ALIASCNT (FIX PER ARXIV/CLEVEREF)
% =========================================================
% Questo blocco risolve il problema "Tutto viene chiamato Theorem"
\usepackage{aliascnt}

\theoremstyle{plain}
\newtheorem{theorem}{Theorem}[section]

% Lemma
\newaliascnt{lemma}{theorem}
\newtheorem{lemma}[lemma]{Lemma}
\aliascntresetthe{lemma}

% Proposition
\newaliascnt{proposition}{theorem}
\newtheorem{proposition}[proposition]{Proposition}
\aliascntresetthe{proposition}

% Corollary
\newaliascnt{corollary}{theorem}
\newtheorem{corollary}[corollary]{Corollary}
\aliascntresetthe{corollary}

% Conjecture
\newaliascnt{conjecture}{theorem}

\aliascntresetthe{conjecture}

% Axiom
\newaliascnt{axiom}{theorem}

\aliascntresetthe{axiom}

\theoremstyle{definition}
% Definition
\newaliascnt{definition}{theorem}

\aliascntresetthe{definition}

% Example (senza aliascnt se non serve referenziarlo spesso, o aggiungilo come sopra)

\newtheorem*{notation*}{Notation}

\newtheorem*{theorem*}{Theorem}
\newtheorem*{corollary*}{Corollary}

\theoremstyle{remark}
% Remark
\newaliascnt{remark}{theorem}
\newtheorem{remark}[remark]{Remark}
\aliascntresetthe{remark}

% Environment personalizzato OSS

% Numerazioni
\numberwithin{equation}{section}
\numberwithin{figure}{section}

% =========================================================
% 6. BIBLIOGRAFIA (Configurazione Safe per arXiv)
% =========================================================
\usepackage[
backend=bibtex,     % ArXiv preferisce bibtex
style=numeric,
sorting=nty,
maxbibnames=50,
giveninits=true
]{biblatex}
\usepackage{csquotes}

% Patch bibliografia
\apptocmd{\thebibliography}{\fontsize{11}{15}\selectfont}{}{}%
\renewbibmacro{in:}{\ifentrytype{article}{}{}}
\addbibresource{refs.bib}

\DeclareFieldFormat[article]{volume}{\textbf{#1}}
\DeclareFieldFormat[article]{title}{\textit{#1}}
\DeclareFieldFormat[article]{journaltitle}{#1}
\DeclareFieldFormat[article]{number}{\textbf{#1}}
\renewbibmacro*{volume+number+eid}{%
	\printfield{volume}%
	\setunit*{\textbf{\addcolon}}%
	\printfield{number}\setunit{\addcomma\space}%
	\printfield{eid}}

% =========================================================
% 7. MACRO UTENTE (DEFINIZIONI)
% =========================================================

\newcommand{\E}{\mathcal{E}}
\newcommand{\F}{\mathcal F}

\newcommand{\R}{\mathbb{R}}

\newcommand{\vect}[1]{\boldsymbol{#1}}
\newcommand{\tens}[1]{\mathsf{#1}}
\newcommand{\abs}[1]{\left\lvert#1\right\rvert}
\newcommand{\n}{\mathbf{n}}
\DeclareMathOperator{\diver}{div}

% Custom accents
\newcommand{\adjustedaccent}[1]{%
	\mathchoice{}{}
	{\mbox{\raisebox{-.5ex}[0pt][0pt]{$\scriptstyle#1$}}}
	{\mbox{\raisebox{-.35ex}[0pt][0pt]{$\scriptscriptstyle#1$}}}
}
\newcommand\smileacc[1]{\overset{\adjustedaccent{\smallsmile}}{#1}}
\newcommand\frownacc[1]{\overset{\adjustedaccent{\smallfrown}}{#1}}

% Colori custom
\definecolor{myred}{rgb}{0.9,0,0}

\definecolor{vargreen}{rgb}{0.0, 0.5, 0.0}

% Altre macro

% Enumerazioni

\newcommand{\at}[3][]{#1|_{#2}^{#3}}

\renewcommand{\rho}{\varrho}
\renewcommand{\theta}{\vartheta}

% =========================================================
% 8. HYPERREF E CLEVEREF (ULTIMI ASSOLUTI)
% =========================================================

% 1. Hyperref
\usepackage{hyperref} 
\hypersetup{colorlinks=true, linkcolor=blue, citecolor=red}

% 2. Cleveref (DEVE ESSERE L'ULTIMO)
\usepackage[capitalise, nameinlink, noabbrev]{cleveref}

% 3. Nomi per cleveref (collegati agli alias definiti sopra)
\crefname{lemma}{lemma}{lemmas}
\Crefname{lemma}{Lemma}{Lemmas}

\crefname{proposition}{proposition}{propositions}
\Crefname{proposition}{Proposition}{Propositions}

\crefname{corollary}{corollary}{corollaries}
\Crefname{corollary}{Corollary}{Corollaries}

\crefname{definition}{definition}{definitions}
\Crefname{definition}{Definition}{Definitions}

\crefname{remark}{remark}{remarks}
\Crefname{remark}{Remark}{Remarks}

\crefname{axiom}{axiom}{axioms}
\Crefname{axiom}{Axiom}{Axioms}

\crefname{conjecture}{conjecture}{conjectures}
\Crefname{conjecture}{Conjecture}{Conjectures}

\crefname{theorem}{theorem}{theorems}
\Crefname{theorem}{Theorem}{Theorems}
\begin{document}
\title{\textsc{Existence and uniqueness of minimizers\\ for axisymmetric nematic films}}
\author{\textsc{G.\ Bevilacqua}$^1$\thanks{\href{mailto:giulia.bevilacqua@dm.unipi.it}{\texttt{giulia.bevilacqua@dm.unipi.it}}}\,\,\,$-$\,\, \textsc{C.\ Lonati}$^2$\thanks{\href{mailto:chiara.lonati@polito.it}{\texttt{chiara.lonati@polito.it}}}\,\,\,$-$\,\, \textsc{L.\ Lussardi}$^2$\thanks{\href{mailto:luca.lussardi@polito.it}{
\texttt{luca.lussardi@polito.it}}}\,\,\,$-$\,\,\textsc{A.\  Marzocchi}$^3$\thanks{\href{mailto:alfredo.marzocchi@unicatt.it}{
\texttt{alfredo.marzocchi@unicatt.it}}}
\bigskip\\
\normalsize$^1$ Dipartimento di Matematica, Università di Pisa, Largo Bruno Pontecorvo 5, I–56127 Pisa, Italy.\\
\normalsize$^2$  DISMA, Politecnico di Torino, c.so Duca degli Abruzzi 24, I-10129 Torino, Italy.\\
\normalsize$^3$ Dipartimento di Matematica e Fisica ``N. Tartaglia", Università Cattolica del Sacro Cuore,\\
\normalsize via della Garzetta 48, I-25133 Brescia, Italy\\
}
\date{}

\maketitle

\begin{abstract}
\noindent Nematic surfaces are thin liquid films endowed with in-plane orientational order.
We study a variational model in which the nematic director is constrained to lie in the tangent space of an axisymmetric surface, and the associated surface energy accounts for both surface tension and elastic nematic contributions.
Here we adopt the surface gradient as the differential operator on the surface, we restrict our analysis to revolution surfaces spanning two coaxial rings, and we assume that the nematic director is aligned along parallels. In this setting, the energy functional reduces to a one-dimensional variational problem. We rigorously prove the existence and uniqueness of minimizers and we provide their complete geometric characterization.  Finally, we run some numerical simulations.
\end{abstract}

\bigskip
\bigskip

\textbf{Mathematics Subject Classification (2020)}: 49J05, 49Q10, 49J45, 76A15.

\textbf{Keywords}: thin films, nematic surfaces, one-dimensional variational problems.

\bigskip

\tableofcontents

\section{Introduction}

Liquid crystals are peculiar materials that share properties of both liquids and solids. 
Since their discovery in 1888 \cite{lehmann1889fliessende}, they have been found in many natural systems: 
proteins and cell membranes are examples of liquid crystalline structures \cite{saw2017topological}. 
More recently, synthetic versions, like for instance polymeric liquid crystals, have attracted great attention thanks to their technological applications, such as display devices \cite{wissbrun2006orientation} and optical switches \cite{goodby1989eutectic}. 

A useful mathematical approach to model these phenomena is the \emph{Oseen--Frank theory} which introduces an elastic energy required to deform a liquid crystal from a uniform ground state to, for example, a nematic one \cite{Virga1994}. Such a theory has been widely employed by characterizing equilibrium configurations on different geometries \cite{Biscari_2006,RossoVirga2011, LopezLeon2011FrustratedNO, Napoli-turzi2020, Segatti-veneroni2014}, by considering different boundary conditions \cite{lonberg1985new, napoli2005weak} and as static limit \cite{yue2004diffuse, cates2018theories, bevilacqua2023global} of nematic flows introduced by Ericksen and Leslie \cite{ericksen1976equilibrium}.

Precisely, the Oseen-Frank theory is mainly adopted to study the most common phase of a liquid crystal, i.e. the {\em nematic phase}. It is made up of rod-like molecules that are on average arranged following an orientational order: their longest axes are approximately pointing in a direction specified by a unit vector called {\em director} and usually denoted by $\n$. In such a context, $\n$ and $-\n$ are equivalent because of head-tail symmetry due to lack of polarity. The most general quadratic form of the energy is given by
$$
W(x, \n, \nabla \n) = k_1 \abs{\diver \n}^2 + k_2 \left(q + \n\cdot {\rm curl}\,\n\right)^2 + k_3 \abs{\n \times {\rm curl}\,\n}^2 + \left(k_4+k_5\right)\left[ {\rm tr}\, (\nabla \n)^2 - (\diver \n)^2\right],
$$
where $k_i$ and $q$ are material constants, typically depending on the temperature. 
Considering the energy functional $\int_\Omega W(x, \n, \nabla \n)\, dx$ on a regular domain $\Omega \subset \mathbb{R}^3$, 
the theory of existence and characterization of minimizers is rather well understood \cite{hardt1986existence, lin1989nonlinear}.  

However, much less is known in the case where the nematic director $\n$ is constrained to lie on a \emph{free surface}.
Here the situation becomes particularly rich: the system must balance the area of the surface, penalized by surface tension, 
with the nematic energy, which tends to favor flat configurations with uniform orientation. 
This creates an interesting coupling: as the surface deforms, it influences the orientation of $\n$, 
and conversely, the orientation of $\n$ modifies the optimal shape of the surface. 
Understanding this interplay is not only mathematically challenging but also crucial for future applications, 
such as flexible screens and adaptive lenses.  

We consider the {\em one-constant approximation} of the Oseen-Frank theory for liquid crystals \cite{napoli2012surface}, i.e. $k_1 = k_2= k_3 = \kappa$, $k_4 = k_5 = 0$ and $q = 0$ and we require that the unit nematic vector $\n$ is constrained to lie in the tangent space of the surface $S$. Precisely, the energy functional we aim to study is given by
\begin{equation}
\label{eq:funzionale_generale}
    \mathcal{E}(\n, S)=\int_S \left(\gamma+\frac{\kappa}{2}|\nabla \n|^2\right)dA,
\end{equation}
where $S$ is a smooth surface with a prescribed boundary, $\gamma$ is the constant surface tension, $\kappa$ is the nematic constant and $\nabla \n$ is a gradient of $\n$ (the choice of the differential operator on the surface will be more precise later). Notice that when $\kappa=0$, the problem \eqref{eq:funzionale_generale} reduces to the classical Plateau problem (see \cite{david2014chapter} and for some recent developments \cite{giusteri2017solution, bevilacqua2019soap, bevilacqua2020dimensional, palmer2021minimal}).

Our first attempt in studying \eqref{eq:funzionale_generale} was done in \cite{bllm}: we
chose as differential operator on $S$ the covariant derivative $\nabla = \tens D$.
Moreover, we rewrote $\mathcal{E}(\n, S)$ for revolution surfaces $S$ parametrized by the profile curve $\rho:[-h,h] \to (0, +\infty)$ and constrained to span two coaxial rings of radius $r$ placed at distance $2h$, namely $\rho(\pm h) = r$. Thus, since in the minimization process it is convenient to choose a constant angle $\alpha$, which is the one formed by $\n$ with the parallels, $\mathcal{E}(\n,S)$ reduced to a purely geometric energy functional of the form
$$
\mathcal{E}^{\rm cov}(\rho) = \bigintsss_{-h}^h\left(\rho\sqrt{1+(\rho')^2}+ c\,\frac{(\rho')^2}{\rho\sqrt{1+(\rho')^2}} \right)\,dx,
$$
where the superscript ``cov'' refers to the choice of the differential operator (covariant derivative) on $S$ and the parameter
$$
c := \frac{\kappa}{2 \gamma}
$$
affects the shape of solutions. In \cite[Theorem 1.1]{bllm}, we rigorously proved that for $\sfrac{h}{r}$ sufficiently small and for all $c \geq 0$, the energy functional $\mathcal{E}^{\rm cov}(\rho)$ admits at least a minimizer.

However, the covariant derivative
neglects the extrinsic curvature of 
the surface $S$.
For this reason, in \cite{napoli2012surface, napoli2018influence}, Napoli and Vergori studied the energy functional $\mathcal{E}(\n, S)$ when $\nabla=\nabla_S$, namely they considered  the surface gradient defined as follows (for more details see \cite{gurtin1975continuum}). Let $x \in S$ and let $\pi_x \colon \mathcal N_x\to S$ be the projection on $S$, where $\mathcal N_x$ is a neighborhood of $0$ on $T_xS$. Thus, $\nabla_S\n(x) := \nabla (\n \circ \pi_x)(0)$ and 
$$
|\nabla_S\n|^2 = \abs{\tens{D}\n}^2 + \abs{\tens{L}\n}^2,
$$
where $\tens{L}$ is the extrinsic curvature tensor of the surface $S$. More precisely, in \cite{napoli2018influence} the case of revolution surfaces spanning two equal coaxial circles of radius $r$ placed at distance $2 h$ is considered. The Authors derived the Euler-Lagrange equations and they studied the equilibrium configurations in terms of the parameter $c$, see for instance Figures 3 and 4 of \cite{napoli2018influence}.

Therefore, motivated by \cite{napoli2018influence}, in this paper, we prove that their equilibrium solutions are actually minimizers as well as we rigorously justify the mentioned pictures and some of their numerical results.

In \Cref{app:derivazione}, we rewrite $\E(\n, S)$ with $\nabla = \nabla_S$ for our specific case of revolution surfaces. Similarly as done in \cite{bllm}, $S$ is parametrized by the profile curve $\rho\colon [-h,h]\to (0,+\infty)$ subjected to the constraint $\rho(\pm h)=r$, and the nematic director is defined as $\n=\cos \alpha \vect e_1+\sin \alpha \vect e_2$, where $\vect e_1$ is the parallel direction, while $\vect e_2$ is the meridian one. Here, as done in \cite{napoli2018influence}, we focus on the case $\alpha = 0$ allowing the nematic director $\n$ to be aligned with the parallels. 
Thus, again in this context, the minimization problem of $\E(\n, S)$ reduces to the minimization of a purely geometric energy functional given by
\begin{equation*}
    \mathcal{E}(\rho)=\bigintsss_{-h}^h\left(\rho + \frac{c}{\rho}\right)\sqrt{1+(\rho')^2} \,dx,
\end{equation*}
where $c$ is as before.
Similarly as for the covariant derivative case \cite{bllm}, even if the energy functional does not depend on $\alpha$, it is reminiscent of the presence of the nematic director through the term
$$
\frac{\sqrt{1+(\rho')^2}}{\rho}.
$$

In the next section, we set the minimization problem in $W^{1,1}$ and we state our main result.

\subsection{Setting of the problem and main result}

Let $h,r>0$ and let 
$$X:=\left\{\rho \in W^{1,1}(-h,h) : \rho> 0,\,\rho(-h)=\rho(h)=r\right\}.$$
Let $c\ge 0$ and let $\mathcal{F}_c \colon X \to \mathbb R$ be the energy functional given by 
\begin{equation*}
     \mathcal{F}_c(\rho)=\bigintsss_{-h}^hf(\rho)\sqrt{1+(\rho')^2}\,dx,\ 
\end{equation*}
where 
\[
f(s)=s+\frac{c}{s}.
\]
Let $\Xi$ be the unique positive solution of the equation 
\[
\Xi \tanh \frac{1}{\Xi}+\text{sech}^2\frac{1}{\Xi}=\Xi 
\]
and let 
\begin{equation*}
    \omega=\frac{1}{\Xi\cosh(\sfrac{1}{\Xi})}. 
\end{equation*}
It turns out (for details, see \cite[Theorem 3.3]{bllm}) that if 
\begin{equation}\label{cond_catenaria}
\frac{h}{r}\le \omega,
\end{equation}
then the equation 
\[
r=\Pi \cosh\frac{h}{\Pi}
\]
has two distinct strictly positive solutions $\Pi_0(h,r)>\Pi_1(h,r)$. We remark that the function 
\begin{equation*}
 \left\{
 \begin{aligned}
    [-h,h]&\to \R\\
 x &\mapsto\rho_0(x)=\Pi_0\cosh\frac{x}{\Pi_0}
 \end{aligned}
 \right.
\end{equation*}
belongs to $X$ and it is the unique minimizer of $\F_0$ (the classical catenary problem). 

Throughout this paper, $h,r$ will always be fixed and they will always satisfy \eqref{cond_catenaria}. Moreover, we underline that also the parameter $c>0$ is fixed. 

We are in position to state our main result.

\begin{theorem}\label{thm_existence}
The functional $\mathcal F_c$ admits a unique minimizer $\rho_c \in X$. Moreover, the following complete characterization of the minimizer $\rho_c$ holds true.
\begin{itemize}
\item[a)] If $c<r^2$, in the equation 
\begin{equation*}
\cosh\frac{2h}{E}=\frac{2r^2-E^2+2c}{E\sqrt{E^2-4c}},
\end{equation*}
there exists only one solution $\smileacc{E}(c)>\Pi_0(h,r)$ and 
\[
\rho_c(x)=\sqrt{
    \frac{\smileacc{E}}{2}\sqrt{\smileacc{E}^2-4c}\cosh\frac{2x}{\smileacc{E}}+\frac{\smileacc{E}^2}{2}-c}.
\]
In this case, $\rho_c\in C^\infty([-h,h])$, it is even, strictly convex and $\rho_0<\rho_c<r$ on $(-h,h)$.
\item[b)] If $c=r^2$, then $\rho_c(x)=r$ for all $x \in [-h,h]$.
\item[c)] If $c>r^2$, the equation 
\begin{equation*}
\cosh\frac{2h}{E}=\frac{-2r^2+E^2-4c}{E\sqrt{E^2-4c}}
\end{equation*}
has a unique solution $\frownacc{E}(c)$ which is positive and 
\[
\rho_c(x)=\sqrt{
    -\frac{\frownacc{E}}{2}\sqrt{\frownacc{E}^2-4c}\cosh\frac{2x}{\frownacc{E}}+\frac{\frownacc{E}^2}{2}-c}.
\]
In this case, $\rho_c\in C^\infty([-h,h])$, it is even, strictly concave and $r<\rho_c<\rho_\infty$ on $(-h,h)$ where 
$$\rho_\infty(x)=\sqrt{h^2+r^2-x^2}.$$
Moreover, $\rho_c\to \rho_\infty$ uniformly on $[-h,h]$ as $c\to +\infty$. 
\end{itemize}
Finally, $\rho_{c_1}<\rho_{c_2}$ on $(-h,h)$ whenever $c_1<c_2$.
\end{theorem}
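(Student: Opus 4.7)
I will combine a direct-method existence argument with a complete explicit integration of the Euler--Lagrange equation, and derive uniqueness and the geometric properties from a careful analysis of the resulting implicit equation for the Beltrami constant $E$.

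\emph{Existence.} Since $f(\rho) \geq 2\sqrt{c}$ for $c > 0$, any minimizing sequence $\{\rho_n\} \subset X$ satisfies $\mathcal F_c(\rho_n) \geq 2\sqrt{c}\int_{-h}^{h}\sqrt{1+(\rho_n')^2}\,dx$, which gives a uniform bound on $\|\rho_n'\|_{L^1}$ and, via the boundary condition $\rho_n(\pm h) = r$, on $\|\rho_n\|_{L^\infty}$. The complementary control $\int_{-h}^h (c/\rho_n)\sqrt{1+(\rho_n')^2}\,dx \leq M$ prevents the sequence from collapsing onto $\{\rho = 0\}$. A classical lower semicontinuity result for integrands convex in the gradient variable and continuous in $\rho$ on $(0,+\infty)$, combined with Fatou on the $c/\rho$ term, yields a minimizer $\rho_c \in X$.

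\emph{Explicit integration.} Interior $C^\infty$ regularity follows from strict convexity in $\rho'$ and autonomy of the Lagrangian, and the Beltrami first integral reads $f(\rho_c) = E\sqrt{1+(\rho_c')^2}$ for some $E>0$. Squaring and setting $u = \rho_c^2$ yields $(u')^2 = (4/E^2)(u^2+(2c-E^2)u+c^2)$, which is equivalent to the linear equation $v'' = (4/E^2)v$ for $v := u - E^2/2 + c$. By symmetry of the boundary data and uniqueness for the Cauchy problem at $x=0$, $\rho_c$ is even, so $v(x) = A\cosh(2x/E)$; the first integral forces $A^2 = E^2(E^2-4c)/4$, and in particular $E^2 \geq 4c$. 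The three cases of the statement correspond exactly to $A>0$, $A=0$, $A<0$, and the boundary condition $u(\pm h) = r^2$ produces the implicit equations written in (a), (b), (c).

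\emph{Case analysis, monotonicity and asymptotics.} Case (b) is immediate: $A=0$ forces $E^2 = 4c$ and $u \equiv c$, hence $c = r^2$ and $\rho_c \equiv r$. In case (a) one studies
\[
G_a(E) := \frac{2r^2 - E^2 + 2c}{E\sqrt{E^2-4c}}
\]
on $(2\sqrt{c},+\infty)$ and shows, via the sign of $G_a'$, that $\cosh(2h/E) = G_a(E)$ admits a unique solution $\smileacc E(c)$; letting $c \to 0$ recovers the classical catenary equation $r = \Pi\cosh(h/\Pi)$, whose larger root is $\Pi_0(h,r)$, and a monotonicity argument in $c$ then places $\smileacc E(c) > \Pi_0(h,r)$. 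Case (c) is handled analogously with its own function $G_c$. The convexity/concavity of $\rho_c$ is read off from the sign of $u''(x) = (4A/E^2)\cosh(2x/E)$ together with $\rho_c = \sqrt{u}$, and the strict pointwise bounds $\rho_0 < \rho_c < r$, respectively $r < \rho_c < \rho_\infty$, follow from a maximum principle comparison with the constant $r$ and with $\rho_0$, resp.\ $\rho_\infty$. The monotonicity $\rho_{c_1}<\rho_{c_2}$ for $c_1<c_2$ is obtained by differentiating the implicit equations or by a direct comparison on the ODE, and $\rho_c \to \rho_\infty$ as $c\to+\infty$ follows from an asymptotic analysis of $\frownacc E(c)$ reducing the explicit formula to $\rho_c^2 \to h^2+r^2-x^2$. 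The main obstacle will be precisely this qualitative study of $G_a$ and its case~(c) counterpart: their monotonicity in $E$ is not manifest, and placing $\smileacc E(c)$ above the classical catenary parameter $\Pi_0(h,r)$ requires an essential use of the standing smallness assumption~\eqref{cond_catenaria}, which is exactly what makes the catenary comparison function available.
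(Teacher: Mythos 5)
There are two genuine gaps, both located exactly where the real difficulties of this problem live. First, the existence argument does not work as written: a uniform bound on $\|\rho_n'\|_{L^1}$ gives no weak compactness, since bounded sets in $L^1$ are not weakly sequentially precompact (Dunford--Pettis requires equi-integrability, which you do not have). At best you extract a weak-$*$ limit in $BV$, and then the boundary conditions $\rho(\pm h)=r$ can be lost in the limit and the relaxed functional acquires boundary penalty terms — this is the Goldschmidt phenomenon, and it is precisely why the paper routes the existence proof through the Botteron--Marcellini relaxation in $W^{1,1}_{\rm loc}$ and then proves the truncation inequalities $\F_c(\rho\vee\rho_0)\le\F_c(\rho)$ and $\F_c(\rho\wedge\rho_\infty)\le\F_c(\rho)$ to pull the relaxed minimizer back into $X$. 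Tellingly, your existence argument never uses the standing hypothesis $h/r\le\omega$, whereas for $c=0$ existence in $X$ genuinely fails without it; the truncation lemma above $\rho_0$ is exactly where that hypothesis enters (via $\sinh(h/\Pi_0)<1$), and it is also what guarantees that every minimizer lies above the stable catenary — a fact you need later and cannot get from a "maximum principle comparison with $\rho_0$", since $\rho_0$ and $\rho_c$ solve different ODEs.

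Second, the uniqueness claim is not established. The equation $\cosh(2h/E)=G_a(E)$ is \emph{not} uniquely solvable in general: for small $c$ it has up to three roots (the paper's numerics exhibit exactly this), so "via the sign of $G_a'$" cannot work; what is true, and what must be proved, is that there is a unique root exceeding $\Pi_0(h,r)$, and combined with the fact that every minimizer lies above $\rho_0$ this yields uniqueness of the minimizer. That step is the most delicate part of the paper (an inflection-point analysis of $\Psi_\gamma(e)=\tfrac{e}{2}\,{\rm arccosh}\,G_a$, the identification of a threshold $\gamma^*$, and some symbolic/numerical verification), and your proposal explicitly defers it as "the main obstacle" rather than resolving it. The explicit integration of the first integral (your substitution $v=u-E^2/2+c$ is a clean variant of the paper's), the case split by the sign of $A$, and the idea of reading the $c\to+\infty$ limit off the asymptotics of $\frownacc{E}(c)$ (a legitimate alternative to the paper's $\Gamma$-convergence argument, though only sketched) are all sound; but as it stands the proposal establishes neither existence in $X$ nor uniqueness.
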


    The proof is a combination of the following steps obtained in the next three sections:
    \begin{itemize}
\item in \Cref{sec:esistenza_minimo_rhoc}, for all $c>0$, we prove the existence of a minimizer $\rho_c\in X$ for $\mathcal{F}_c$;
\item in \Cref{sec:studio_bvp}, we show that above $\rho_0$, the unique stable catenary for $c = 0$, the minimizer $\rho_c$ is unique; we provide the monotonicity property with respect to the parameter $c$ and we characterize the shape of the minimizer $\rho_c$ solving a boundary value problem;
\item in \Cref{sec:c_tinfty}, by essentially a $\Gamma$-convergence procedure, we characterize the behavior of minimizers as $c \to \infty$.
\end{itemize}
Finally, in \Cref{sec:numerica} we run some numerical simulations.

\subsection{Strategy of the proof}

Similarly as done in \cite{bllm}, we decide to minimize the energy functional $\mathcal{F}_c$ in $W^{1,1}$ and not in $BV$ and this requires a more delicate analysis.
Differently from \cite{bllm}, it is not true a posteriori that the energy decreases by convexification: numerical simulations performed in \cite{napoli2018influence} (see their Figure 3) suggest that the shape of solutions strongly depends on the regime of the parameter $c$. For instance, in the case $c=r^2$, it is easy to see that the constant function $\rho=\sqrt c$ is the unique minimizer. If \eqref{cond_catenaria} holds, namely we are in the regime where the catenary $\rho_0$ is the unique minimizer for $\mathcal F_0$, if $c < r^2$, then the profile is convex while it is concave for $c>r^2$. For this reason, we proceed in a different way to show \Cref{thm_existence}.

Since the energy density of $\mathcal {F}_c$ is not coercive and it has linear growth in $\rho'$, to look for minimizers of $\mathcal{F}_c$ over $X$ we follow \cite[Theorem 5.1]{bm1991}. In \Cref{sec:relaxation} we relax $\F_c$ weakly in $W^{1,1}_{\rm loc}(-h,h)$. The relaxed functional is defined on the weak closure of $X$, which is $W^{1,1}_{\rm loc}(-h,h)$. The main problem in minimizing $\F_c$ on such a set realizes on the fact that a generic $\rho\in W^{1,1}_{\rm loc}(-h,h)$ may vanish at some point. In order to prevent this, we need to distinguish two cases. 
\begin{itemize}
    \item If $c\in (0,r^2)$, in \Cref{sec:c<r2}, we show that it is energetically convenient to lie above $\sqrt c$: we are able to prove that $\F_c(\rho \vee \sqrt c)\le \F_c(\rho)$. This allows (see \Cref{thm:minimo_rilassato}) to minimize the relaxed energy functional on 
$$
\left\{\rho\in W^{1,1}_{\rm loc}(-h,h):\rho \geq \sqrt c\right\}.
$$
Using a first variation argument, we prove that minimizers obtained by this procedure are convex and they satisfy the boundary conditions $\rho(\pm h)=r$ in the usual Sobolev sense. Finally, by \Cref{tilli1} in the regime where \eqref{cond_catenaria} holds true, we show that $\F_c(\rho \vee \rho_0)\le \F_c(\rho)$, implying that minimizers belong to $W^{1,1}(-h,h)$. 
    \item If $c>r^2$, in \Cref{sec:c>r^2}, by  \Cref{thm:minimo_rilassato}, we minimize the relaxed energy functional on 
$$
\left\{\rho\in W^{1,1}_{\rm loc}(-h,h):\rho \geq c_0\right\},
$$
where $c_0=\min \rho_0$. Using again a first variation argument, we prove that minimizers obtained by this procedure are concave and they satisfy the boundary conditions $\rho(\pm h)=r$ in the classical Sobolev sense. In order to conclude, we need to show that it is convenient to have a suitable bound from above. Precisely, by \Cref{tilli2} we show that $\F_c(\rho \wedge \rho_\infty)\le \F_c(\rho)$, where $\rho_\infty$ is the unique minimizer (obtained in \Cref{lemma:esiste_minimo_Finfty}) of 
\[
\mathcal{F}_\infty(\rho) = \bigintsss_{-h}^h \frac{\sqrt{1+ (\rho')^2}}{\rho}\, dx,
\]
and this yields the conclusion.
\end{itemize}

By deriving the Euler-Lagrange equation, in \Cref{sec:studio_bvp}, we get that any minimizer of $\F_c$ satisfies the boundary value problem 
\[
\left\{
\begin{aligned}
    &\rho''=\frac{(\rho^2-c)(1+(\rho')^2)}{\rho(\rho^2+c)}\\
    &\rho(-h) = r\\
&\rho(h)=r.
\end{aligned}
\right.
\]
In \Cref{prop:proprieta_bvp} we get the explicit expressions of solutions:
$$
\rho_c(x)=\sqrt{ \frac{E}{2}\sqrt{E^2-4c}\cosh\left(\frac{2x}{E}\right)+\frac{E^2}{2}-c}, \qquad \hbox{if } c<r^2,
$$
and
$$
\rho_c(x)=\sqrt{-\frac{E}{2}\sqrt{E^2-4c}\cosh\left(\frac{2x}{E}\right)+\frac{E^2}{2}-c}, \qquad \hbox{if }  c>r^2,
$$
where $E=E(c)>0$ is a solution of a transcendental equation coming from the imposition of boundary conditions for $\rho_c$. We point out  that these solutions were already found by Napoli and Vergori \cite{napoli2012surface}. Due to the fact that the transcendental equation has multiple solutions (in general), we do not have uniqueness of critical points. Nevertheless, in both regimes $c<r^2$ \Cref{lemma:unicita_soluzioni_1} and $c>r^2$ in \Cref{lemma:unicita_soluzioni_2}, we prove that there exists a unique critical point lying above the stable catenary $\rho_0$, and this gives the uniqueness of the minimizer $\rho_c$. 

Finally, in \Cref{sec:c_tinfty}, we remark that $c\mapsto \rho_c$ is an increasing function and $\rho_c\to \rho_\infty$ as $c\to +\infty$ uniformly on $[-h,h]$; this is reasonable since we expect that $\sfrac{\F_c}{c}$ $\Gamma$-converges to $\F_\infty$.

\section{Existence of minimizers}
\label{sec:esistenza_minimo_rhoc}

In this section we show the following proposition, which proves exactly the first part of \Cref{thm_existence}.
\begin{proposition}
\label{prop:esistenza_minimo_Ec}
    The energy functional $\mathcal{F}_c$ admits a minimizer $\rho_c \in C^2([-h,h])$.
\end{proposition}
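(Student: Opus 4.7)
The plan is to follow the three-step strategy sketched in the Introduction, splitting the argument according to the regime of $c$. The trivial regime $c=r^2$ is handled directly: by the AM--GM inequality, $f(\rho)=\rho+c/\rho\ge 2\sqrt c$ with equality iff $\rho=\sqrt c=r$, and since $\sqrt{1+(\rho')^2}\ge 1$, we deduce $\mathcal F_c(\rho)\ge 4hr$ with equality forced exactly by the constant $\rho\equiv r$, which lies in $X$. For $c\neq r^2$, the genuine difficulty is that the integrand has only linear growth in $\rho'$, so the direct method in $W^{1,1}$ fails. I therefore appeal to the relaxation result of \cite[Theorem 5.1]{bm1991} to pass to the relaxed functional $\overline{\mathcal F}_c$ on $W^{1,1}_{\rm loc}(-h,h)$, which is lower semicontinuous with respect to weak convergence and agrees with $\mathcal F_c$ on $X$.

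The next step is to set up a direct method for $\overline{\mathcal F}_c$ on a suitably restricted set that rules out the catastrophic behavior $\rho\to 0^+$. For $c\in(0,r^2)$ I would minimize over $\{\rho\in W^{1,1}_{\rm loc}(-h,h):\rho\ge \sqrt c\}$: the key observation, to be proved via a direct truncation computation, is that $\mathcal F_c(\rho\vee \sqrt c)\le \mathcal F_c(\rho)$ (since $f$ is decreasing on $(0,\sqrt c)$ and truncation does not increase the length element). This yields the lower bound $\rho\ge\sqrt c$, which together with the prescribed boundary values gives uniform $L^\infty$ bounds and equi-integrability of $f(\rho)$; standard weak compactness then delivers a minimizer. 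For $c>r^2$ one uses the analogous lemma \textsc{(tilli2)} to get the upper bound $\rho\le\rho_\infty$, together with the lower bound $\rho\ge c_0=\min \rho_0$, and again extracts a minimizer by direct method on the relaxed functional.

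To upgrade the minimizer from the relaxed problem to one in $X$, I would invoke the truncation lemmas $\mathcal F_c(\rho\vee \rho_0)\le \mathcal F_c(\rho)$ (in the regime \eqref{cond_catenaria}) and $\mathcal F_c(\rho\wedge \rho_\infty)\le \mathcal F_c(\rho)$, which force the minimizer to live in a compact slab not touching the degenerate set $\{\rho=0\}$ and attaining the boundary values $r$ at $\pm h$ in the usual Sobolev sense. Once this is established, a first variation argument against compactly supported perturbations shows that the minimizer satisfies the Euler--Lagrange equation
\[
\rho''=\frac{(\rho^2-c)(1+(\rho')^2)}{\rho(\rho^2+c)},
\]
which is a regular second-order ODE on the compact interval $[-h,h]$ since $\rho$ is bounded away from $0$. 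A standard bootstrap then gives $\rho_c\in C^\infty([-h,h])$, in particular $C^2$, as required.

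The main obstacle is genuinely the truncation step: the lack of coercivity in $\rho'$ means that a weak limit in $W^{1,1}_{\rm loc}$ a priori may lose mass near $\pm h$ or touch zero in the interior, and neither outcome is compatible with the functional being finite. The truncation inequalities (\textbf{tilli1}) and (\textbf{tilli2}) encode precisely the geometric monotonicity of $f$ needed to rule these out, and their careful proof is the technical heart of the argument; everything else is a standard combination of relaxation, direct method, and ODE regularity.
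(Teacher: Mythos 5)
Your overall strategy coincides with the paper's: handle $c=r^2$ by AM--GM, relax via \cite[Theorems 2.1 and 5.1]{bm1991} on $W^{1,1}_{\rm loc}$, minimize the relaxed functional over the obstacle set $\{\rho\ge\sqrt c\}$ (resp.\ a bounded slab for $c>r^2$), and use the truncation inequalities of \Cref{tilli1} and \Cref{tilli2} to return to $X$, finishing with first variation and ODE regularity. However, two steps are passed over in a way that leaves genuine gaps. First, you assert that the truncation lemmas ``force the minimizer to \dots\ attain the boundary values $r$ at $\pm h$ in the usual Sobolev sense.'' They do not: the relaxed functional $\overline{\F}_c$ only \emph{penalizes} the deviation $|F(\rho(\pm h))-F(r)|$, and a minimizer of a relaxed problem may well fail to attain the prescribed traces (this is exactly what happens for the Goldschmidt solution when $c=0$). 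The paper needs a dedicated competitor argument: one modifies the candidate minimizer on $[h-\eta,h]$ by a segment reaching a value $\zeta\in(\rho_c(h),r)$, computes $p(\eta)=\overline{\F}_c(\rho_\eta)$, and shows $p'(0)<0$, contradicting minimality; this computation in turn requires knowing that $\rho_c'(\pm h)$ is finite, which is where convexity (resp.\ \Cref{tilli2} in the concave regime) actually enters. Without some version of this step your argument does not produce an element of $X$.

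Second, your first-variation step silently assumes that the minimizer of the obstacle problem stays strictly above the obstacle. Free outer variations $\rho_c+t\psi$ with $t$ of both signs are only admissible on intervals where $\rho_c>\sqrt c$; at a touching point one would only obtain a variational inequality, not the Euler--Lagrange equation, and the $C^2$ conclusion would fail there. The paper rules out touching points by noting that at such a point $x_0$ one would have $\rho_c(x_0)=\sqrt c$, $\rho_c'(x_0)=0$ (using the $C^1$ regularity obtained from the Du Bois-Reymond argument), and then the Cauchy problem for the Euler--Lagrange ODE with these data admits both $\rho_c$ and the constant $\sqrt c$ as local solutions, contradicting uniqueness. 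A related, smaller point: once you have the lower bound $\rho\ge\sqrt c$, ``standard weak compactness'' does \emph{not} deliver a minimizer in $W^{1,1}$ (bounded sets there are not weakly sequentially compact); the existence on the obstacle set is precisely the content of \cite[Theorem 5.1]{bm1991}, which requires checking the hypothesis $g''(\overline x)>0$ at the critical point $\overline x=\sqrt c$ of $f$. These are all repairable within your framework, but as written they are missing ideas rather than omitted routine details.
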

\begin{remark}
\label{rem:c=r2}
    We remark that if $c = r^2$, then the constant function $\rho_c(x) = \sqrt{c}$ is the unique minimizer for $\mathcal{F}_c$. Indeed, since $\sqrt{1+ (\rho')^2}\geq 1$ and since $\min f=f(\sqrt c)$, we directly get 
    $$\mathcal{F}_c(\rho) \geq \mathcal{F}_c(\sqrt{c}).$$
    Moreover $\rho_c(\pm h) = \sqrt{c} = r$, namely boundary conditions are satisfied. Finally, if $\rho_c(\pm h) = \sqrt{c} $ and $\F_c(\rho)=\F_c(\sqrt c)$ we get 
    \[
    4h\sqrt c=\F_c(\sqrt c)=\bigintsss_{-h}^h f(\rho)\sqrt{1+ (\rho')^2}\,dx\ge 2\sqrt c \bigintsss_{-h}^h \sqrt{1+ (\rho')^2}\,dx.
    \]
    As a consequence, 
    \[
    \bigintsss_{-h}^h\sqrt{1+ (\rho')^2}\,dx\le 2h
    \]
    which means that $\rho'=0$ a.e.\,on $(-h,h)$, implying that $\rho=\sqrt c$.
\end{remark}
By \Cref{rem:c=r2}, from now on we can assume either $0<c<r^2$ or $c>r^2$.
To prove \Cref{prop:esistenza_minimo_Ec} we need some auxiliary results.

\subsection{Relaxation of non coercive energy functionals}
\label{sec:relaxation}
Let $a,b \in \R$ with $a<b$ and let $g\colon \R \to [0,+\infty)$ be a continuous function. Let us consider energy functionals of the form
    \[
    \label{e:tipico_funzionale}
    \mathcal{G}(\rho)= \bigintsss_a^b g(\rho)\sqrt{1+ (\rho')^2}\, dx,
\]
defined on
\[
   \label{e:tipico_insieme}
    \mathcal W:= \left\{\rho \in W^{1,1}(a,b):\ \rho(a) =c_1,\, \rho(b) = c_2\right\},
\]
for some 
$c_1,c_2\in \R$. 
Let $\overline{\mathcal W}\subseteq W^{1,1}_{\rm loc}(a,b)$ be the closure of $\mathcal W$ in $W^{1,1}_{\rm loc}(a,b)$ with respect to the weak topology of $W^{1,1}_{\rm loc}(a,b)$. For any $\rho \in \overline{\mathcal W}$, let 
\[
\overline{\mathcal G}(\rho)=\inf\left\{\liminf_{j\to+\infty}\mathcal G(\rho_j) : \rho_j\rightharpoonup \rho \,\text{in}\, W^{1,1}_{\rm loc},\,\rho_j\in \mathcal W\right\}.
\]
The following representation theorem holds true.
\begin{theorem}[{\cite[Theorem 2.1]{bm1991}}]\label{thm:rilassato}
We have $\overline{\mathcal W}=W^{1,1}_{\rm loc}(a,b)$. Moreover, the functional $\overline{\mathcal G}$ is given by
\begin{equation*}
    \overline{\mathcal{G}}(\rho)=\mathcal{G}(\rho)
    +\left|G(\rho(a))-G(c_1)\right|
    +\left|G(\rho(b))-G(c_2)\right|,
\end{equation*}
where $G$ is a primitive of $g$. Here, the traces of $\rho$ are defined in a weak sense, namely
\[
\rho(a)=\inf\left\{\liminf_{j\to+\infty}\rho(x_j) : x_j\to a\right\}, \quad \rho(b)=\inf\left\{\liminf_{j\to+\infty}\rho(x_j) : x_j\to b\right\}.
\]
\end{theorem}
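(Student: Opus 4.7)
The plan is to prove the identification of $\overline{\mathcal W}$ and the formula for $\overline{\mathcal G}$ in three independent steps: (i) the density $\overline{\mathcal W}=W^{1,1}_{\rm loc}(a,b)$; (ii) the liminf inequality $\overline{\mathcal G}(\rho)\ge \mathcal G(\rho)+|G(\rho(a))-G(c_1)|+|G(\rho(b))-G(c_2)|$; (iii) a matching recovery sequence. The recurring technical ingredient is the elementary pointwise bound $g(\rho)\sqrt{1+(\rho')^2}\ge g(\rho)|\rho'|=|(G\circ\rho)'|$, which trades a ``vertical'' excursion of $\rho$ for a controlled variation of its primitive $G$, and which becomes asymptotically sharp on intervals where $|\rho'|\to+\infty$.

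Density is immediate: given $\rho\in W^{1,1}_{\rm loc}(a,b)$, take $a_n\downarrow a$, $b_n\uparrow b$ and set $\rho_n:=\rho$ on $[a_n,b_n]$, extended linearly to the prescribed boundary values $c_1,c_2$ on $[a,a_n]$ and $[b_n,b]$. Then $\rho_n\in \mathcal W$, and $\rho_n\equiv\rho$ on every compact $K\subset\subset(a,b)$ eventually, so $\rho_n\to\rho$ strongly in $W^{1,1}(K)$, whence weakly in $W^{1,1}_{\rm loc}(a,b)$. For the liminf inequality, fix $\rho_j\in\mathcal W$ with $\rho_j\rightharpoonup\rho$ in $W^{1,1}_{\rm loc}(a,b)$; extracting a subsequence, we may assume $\rho_j\to\rho$ pointwise a.e.\ on $(a,b)$. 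For $\delta>0$ small, split $\mathcal G(\rho_j)$ over $(a,a+\delta)$, $(a+\delta,b-\delta)$ and $(b-\delta,b)$: the middle piece is handled by classical lower semicontinuity of convex-in-gradient integrals (Ioffe), yielding $\int_{a+\delta}^{b-\delta}g(\rho)\sqrt{1+(\rho')^2}\,dx$ in the limit, and monotone convergence as $\delta\to 0$ gives $\mathcal G(\rho)$. For the left boundary piece, any $x\in(a,a+\delta)$ in the pointwise-convergence set satisfies $\int_a^x g(\rho_j)\sqrt{1+(\rho_j')^2}\,dy\ge |G(\rho_j(x))-G(c_1)|$ by the key inequality together with $\rho_j(a)=c_1$, whence $\liminf_j\int_a^{a+\delta}\ge|G(\rho(x))-G(c_1)|$ by continuity of $G$. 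Letting $x\to a^+$ along a sequence that realizes the weak trace $\rho(a)$ yields $|G(\rho(a))-G(c_1)|$; the analogous estimate at $b$ and summing all pieces complete the liminf.

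For the upper bound, assume $\mathcal G(\rho)<+\infty$ (otherwise the bound is trivial) and, by the weak trace definition, select sequences $x_n^-\downarrow a$ and $x_n^+\uparrow b$ with $\rho(x_n^-)\to\rho(a)$ and $\rho(x_n^+)\to\rho(b)$. Define $\rho_n=\rho$ on $[x_n^-,x_n^+]$, constant $c_1$ on $[a,x_n^--\varepsilon_n]$, and a monotone linear interpolation between $c_1$ and $\rho(x_n^-)$ on $[x_n^--\varepsilon_n,x_n^-]$ (and symmetrically near $b$), for $\varepsilon_n\to 0$ fast enough. A change of variable $u=\rho_n(y)$ on the steep piece gives
\[
\int_{x_n^--\varepsilon_n}^{x_n^-} g(\rho_n)\sqrt{1+(\rho_n')^2}\,dy=\int_{c_1}^{\rho(x_n^-)} g(u)\sqrt{1+1/(\rho_n')^2}\,du\longrightarrow |G(\rho(a))-G(c_1)|
\]
since $|\rho_n'|\to+\infty$, while the constant part contributes $O(x_n^--a)\to 0$ and the interior part converges to $\mathcal G(\rho)$ by monotone convergence. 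Since $\rho_n\equiv \rho$ eventually on every compact $K\subset\subset(a,b)$, $\rho_n\rightharpoonup\rho$ in $W^{1,1}_{\rm loc}(a,b)$, and the energies match the right-hand side, so $\overline{\mathcal G}(\rho)$ is at most the claimed value. The most delicate point is the boundary matching in the liminf: because the weak trace is an infimum over possible accumulation values, one must verify that the lower bound is indeed saturated at $\rho(a)$ rather than some strictly larger accumulation level, which forces a careful choice of the sequence $x\to a^+$ in conjunction with the monotonicity of $G$ inherited from $g\ge 0$.
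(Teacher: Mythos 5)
This statement is not proved in the paper: it is quoted verbatim from \cite[Theorem 2.1]{bm1991} and used as a black box, so there is no in-paper argument to compare yours against. Judged on its own, your three-step scheme (density, liminf inequality via the pointwise bound $g(\rho)\sqrt{1+(\rho')^2}\ge |(G\circ\rho)'|$ plus Ioffe semicontinuity in the interior, and a recovery sequence with a steep affine ramp whose energy is computed by the change of variables $u=\rho_n(y)$) is the standard relaxation argument and is essentially correct, including the observation that weak $W^{1,1}_{\rm loc}$ convergence upgrades to local uniform convergence so that $\rho_j(x)\to\rho(x)$ at the points where you evaluate $G$.

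The one point you flag but do not actually close is the consistency between your two bounds. Your liminf argument gives $\liminf_j\int_a^{a+\delta}\ge |G(\rho(x))-G(c_1)|$ for \emph{every} $x$ near $a$, hence a lower bound by $\sup_\lambda|G(\lambda)-G(c_1)|$ over \emph{all} accumulation values $\lambda$ of $\rho$ at $a$, while your recovery sequence produces exactly $|G(\rho(a))-G(c_1)|$ at the \emph{infimum} trace; if these two quantities could differ, the stated formula would fail. They cannot: either $\mathcal G(\rho)=+\infty$ and the identity is trivial, or $\int_a^b|(G\circ\rho)'|\,dx\le\mathcal G(\rho)<+\infty$, so $G\circ\rho\in W^{1,1}(a,b)$ and therefore $G(\rho(x))$ admits a genuine limit as $x\to a^+$; consequently $G(\lambda)$ takes the same value at every accumulation value $\lambda$ of $\rho$ at $a$, the supremum and the infimum-trace value coincide, and the choice of the sequence $x\to a^+$ is immaterial. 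Adding this two-line remark (rather than appealing vaguely to ``monotonicity of $G$'') makes the proof complete.
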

The existence of a minimizer for the relaxed energy functional $\overline{\mathcal{G}}$ is guaranteed by the next theorem.
\begin{theorem}[\protect{\cite[Theorem 5.1]{bm1991}}]\label{thm:minimo_rilassato}
    \noindent Assume that $g\in C^2([a_0, + \infty))$, for some $a_0>0$, satisfies one of the following conditions: 
    \begin{itemize}
    \item[i)] $g>0$ is monotone;
    \item[ii)] if there exists $\overline{x} \in [a_0,+\infty)$ such that $g'(\overline{x}) = 0$ then $g''(\overline{x})>0$.
    \end{itemize}
    Then the problem
    $$
    \min\left\{\overline{\mathcal{G}}(\rho): \rho\in W_{\rm loc}^{1,1}(a,b),\,\rho\geq a_0\right\}
    $$
    admits a solution.
\end{theorem}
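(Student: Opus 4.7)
The plan is to apply the direct method of the calculus of variations, leveraging the preceding representation theorem, which identifies $\overline{\mathcal G}$ with the sequential weak-$W^{1,1}_{\rm loc}$ relaxation of $\mathcal G$ from $\mathcal W$. In particular $\overline{\mathcal G}$ is automatically sequentially weakly lower semicontinuous on $W^{1,1}_{\rm loc}(a,b)$. Thus it suffices to take a minimizing sequence $(\rho_j)\subseteq\{\rho\in W^{1,1}_{\rm loc}(a,b):\rho\ge a_0\}$ with $\overline{\mathcal G}(\rho_j)\le M$, extract a subsequence weakly convergent in $W^{1,1}_{\rm loc}(a,b)$ to some $\rho$ with $\rho\ge a_0$, and pass to the $\liminf$.

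The whole substance is the compactness. Conditions (i) and (ii) are designed to rule out two pathologies that would otherwise destroy compactness in $W^{1,1}_{\rm loc}$: unboundedness of $\rho_j$, and concentration of $\rho_j$ on regions where $g$ is so small that the energy decouples from $|\rho_j'|$. First, the boundary penalty terms $|G(\rho_j(a))-G(c_1)|$ and $|G(\rho_j(b))-G(c_2)|\le M$, combined with the strict monotonicity of the primitive $G$ (since $g\ge 0$ is $C^2$ and, under either hypothesis, not identically zero), give two-sided bounds on the weak traces $\rho_j(a^+),\rho_j(b^-)$. Next, the structural condition on $g$: under (i), $g$ is bounded below by a positive constant $g_0$ on any bounded sub-range of $[a_0,+\infty)$, so $g_0|\rho_j'|\le g(\rho_j)\sqrt{1+(\rho_j')^2}$ integrates to $\int_a^b|\rho_j'|\,dx\le M/g_0$; under (ii), the strict local-minimum condition at any critical point of $g$ forbids $g\circ\rho_j$ from being uniformly small on intervals of positive measure, and a localization argument recovers the same $L^1_{\rm loc}$ bound on $\rho_j'$.

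Once $(\rho_j)$ is bounded in $W^{1,1}_{\rm loc}(a,b)$, a diagonal extraction yields $\rho_j\rightharpoonup \rho$ in $W^{1,1}_{\rm loc}$ along a subsequence, and the pointwise bound $\rho_j\ge a_0$ passes to the limit via a.e.\ convergence. Sequential weak lower semicontinuity of $\overline{\mathcal G}$ then gives $\overline{\mathcal G}(\rho)\le\liminf_j\overline{\mathcal G}(\rho_j)=\inf\overline{\mathcal G}$, so $\rho$ is the desired minimizer. The main obstacle is the quantitative handling of hypothesis (ii): the theorem allows $g$ to have a critical point (possibly with $g=0$ there), and the $g''>0$ condition is precisely what prevents $g\circ\rho_j$ from degenerating on long plateaus; translating this pointwise convexity into a genuine $L^1_{\rm loc}$ estimate on $\rho_j'$ is the delicate part of the proof, and it is here that the interaction between the coercivity encoded in (i)-(ii) and the boundary penalty in $\overline{\mathcal G}$ does its essential work.
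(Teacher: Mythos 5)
First, a remark on scope: the paper does not prove this statement at all. It is \Cref{thm:minimo_rilassato}, imported verbatim from \cite[Theorem 5.1]{bm1991}, so there is no in-paper proof to compare against; your proposal has to be judged on its own merits as a proof of the cited result.

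On those merits there is a genuine gap, and it sits exactly where the theorem's difficulty lies: the compactness step. From $\overline{\mathcal G}(\rho_j)\le M$ and a positive lower bound on $g$ you correctly extract $\int_a^b|\rho_j'|\,dx\le M/g_0$, but a bound in $L^1$ does \emph{not} give weak sequential compactness in $L^1$ (Dunford--Pettis requires equi-integrability, which you have no way to produce here). A minimizing sequence can concentrate derivative mass, and then the only available limit is weak-$*$ in $BV_{\rm loc}$: the limit function may have jump or Cantor parts and hence fail to lie in $W^{1,1}_{\rm loc}(a,b)$, so it is not an admissible competitor. This is precisely what ``non-coercive'' means for this functional, it is why the relaxation machinery of \Cref{thm:rilassato} is set up in the first place, and it is the phenomenon that hypotheses i) and ii) are designed to control --- not, as you suggest, to prevent $g\circ\rho_j$ from being ``uniformly small'' (in the paper's applications $g$ is bounded below by a positive constant and condition ii) is still doing real work). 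Your sentence ``a localization argument recovers the same $L^1_{\rm loc}$ bound on $\rho_j'$'' therefore does not address the actual obstruction: the bound itself is not the problem; the problem is that the bound is too weak a form of compactness. A correct argument must either show that the $BV_{\rm loc}$ limit has no singular part (using i)/ii), e.g.\ via the structure of the relaxed energy on jumps, which costs $|G(\rho^+)-G(\rho^-)|$), or replace the minimizing sequence by one with better structure (monotone/unimodal competitors, Helly-type selection) for which the limit is provably Sobolev. A secondary, lesser issue: the sequential weak-$W^{1,1}_{\rm loc}$ lower semicontinuity of $\overline{\mathcal G}$ is not ``automatic'' from its being a relaxation, since weak $L^1$ convergence does not admit a routine diagonalization; here one should instead verify l.s.c.\ directly from the explicit representation formula of \Cref{thm:rilassato}, including the trace penalty terms, which are themselves only semicontinuous in a delicate liminf sense.
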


\subsection{Some preliminary estimates}

We first investigate an auxiliary functional related, in a suitable sense, to the case $c\to+\infty$. Let 
%$\mathcal{F}_\infty \colon X \to \R$ be the functional defined as
\begin{equation}
    \label{e:F_infty}
     \mathcal{F}_\infty(\rho):\left\{
    \begin{aligned}
        X &\to \R\\
        \rho&\mapsto\bigintsss_{-h}^h\frac{1}{\rho}\sqrt{1+(\rho')^2}\,dx.
    \end{aligned}
    \right.
\end{equation}

\begin{lemma}
\label{lemma:esiste_minimo_Finfty}
    The energy functional $\mathcal{F}_\infty$ defined in \eqref{e:F_infty} admits a unique minimizer which is given by 
    \[
    \rho_\infty(x)=\sqrt{h^2+r^2-x^2}.
    \]
\end{lemma}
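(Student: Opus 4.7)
The key observation is that $\mathcal{F}_\infty(\rho)$ is exactly the length of the graph of $\rho$ measured in the Poincaré upper half-plane metric $(dx^2+dy^2)/y^2$, so the problem reduces to finding the hyperbolic geodesic from $(-h,r)$ to $(h,r)$ that can be written as a graph over $[-h,h]$. Such a geodesic is uniquely the upper semicircle $x^2+y^2=h^2+r^2$, which is precisely the graph of $\rho_\infty$; this already identifies $\rho_\infty$ as the unique candidate.

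To extract this from the variational problem, I would work with the Euler--Lagrange equation of $\mathcal{F}_\infty$. Since the Lagrangian $L(\rho,\rho')=\sqrt{1+(\rho')^2}/\rho$ is autonomous, the Beltrami first integral $L-\rho'L_{\rho'}=\text{const}$ collapses to
\[
\rho\sqrt{1+(\rho')^2}=K
\]
for some $K>0$. Squaring and introducing $u=\rho^2$ (so that $\rho^2(\rho')^2=(u')^2/4$) turns this into $u+(u')^2/4=K^2$, whose general solution is $u+(x-x_0)^2=K^2$, i.e. a circular arc centered on the $x$-axis. Imposing the symmetric boundary conditions $\rho(\pm h)=r$ fixes $x_0=0$ and $K^2=h^2+r^2$, producing exactly $\rho_\infty(x)=\sqrt{h^2+r^2-x^2}$; in particular $\rho_\infty\in X$ and $\rho_\infty\ge r>0$ on $[-h,h]$.

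For existence, I would apply \Cref{thm:minimo_rilassato} with $g(\rho)=1/\rho$, strictly monotone on $(0,+\infty)$: hypothesis (i) is satisfied and yields a minimizer $\rho_*$ of the relaxed functional $\overline{\mathcal{F}_\infty}$ on $\{\rho\ge a_0\}$ for some $a_0\in(0,r]$. To lift $\rho_*$ to an actual minimizer in $X$, I would use a truncation with $\rho_\infty$: since $g$ is decreasing and $\rho_\infty\ge r\ge a_0$ on $[-h,h]$, the pointwise cut $\rho\wedge\rho_\infty$ satisfies $\overline{\mathcal{F}_\infty}(\rho\wedge\rho_\infty)\le\overline{\mathcal{F}_\infty}(\rho)$, which forces the weak boundary traces of $\rho_*$ to equal $r$ and hence $\rho_*\in X$. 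The main obstacle is uniqueness: the Lagrangian $L$ is not jointly convex in $(\rho,\rho')$, since its Hessian determinant equals $(2-(\rho')^2)/[\rho^4(1+(\rho')^2)]$, negative for $|\rho'|>\sqrt 2$, so standard convex-analytic uniqueness arguments are unavailable. Uniqueness must instead be extracted from the ODE structure above: any minimizer satisfies the Beltrami relation, the circular ansatz has only two free parameters $(x_0,K)$, and these are pinned down by the two boundary conditions, leaving $\rho_\infty$ as the only admissible critical point.
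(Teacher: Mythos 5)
Your route to the explicit formula is essentially the paper's: the Beltrami first integral $\rho\sqrt{1+(\rho')^2}=K$ followed by the substitution $u=\rho^2$ is exactly how the paper obtains $\rho_\infty=\sqrt{h^2+r^2-x^2}$, and existence via \Cref{thm:minimo_rilassato} with $g(s)=\sfrac{1}{s}$ is the paper's Step 1 (the paper works on $\{\rho\ge r\}$ rather than $\{\rho\ge a_0\}$ with $a_0\le r$, which spares it from handling dips below $r$ in the interior; it then removes the constraint at the end by the elementary estimate $\F_\infty(\rho\vee r)\le\F_\infty(\rho)$). The paper also devotes a step, via Du Bois--Reymond, to showing that a minimizer of the relaxed problem is $C^2$ and concave before the first integral is invoked; you gloss over this, but it is a standard reconstruction.

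The genuine gap is the boundary-trace step. You claim that $\overline{\mathcal{F}}_\infty(\rho\wedge\rho_\infty)\le\overline{\mathcal{F}}_\infty(\rho)$, deduced from ``$g$ decreasing and $\rho_\infty\ge r$'', forces the weak traces of $\rho_*$ to equal $r$. This fails on two counts. First, the inequality does not follow from monotonicity of $g$ alone: truncation at a \emph{constant} level from \emph{below} is what monotonicity gives; truncation from \emph{above} at the non-constant $\rho_\infty$ is the content of the paper's \Cref{tilli2}, whose proof uses the minimality of $\rho_\infty$ for $\F_\infty$ on subintervals --- precisely the statement you are proving --- so invoking it here is circular. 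Second, even granting the inequality, cutting from above only yields $\rho_*\le\rho_\infty$, hence traces $\le r$; the actual danger with the relaxed functional is that the trace is \emph{strictly less} than $r$, with the deficit paid through the penalty $\left|\log\rho_*(h)-\log r\right|$ rather than by rising continuously to $r$. Ruling this out requires a separate argument: the paper builds a competitor $\rho_\eta$ that linearly lifts the profile to a value $\zeta\in(\rho_*(h),r)$ on $[h-\eta,h]$ and computes that the one-sided derivative of the energy at $\eta=0$ equals $\frac{1}{\rho_*(h)}\bigl(\rho_*'(h)-\sqrt{1+(\rho_*'(h))^2}\bigr)<0$ (or $-\infty$), contradicting minimality. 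Without this (or an equivalent) step your proof does not show that the relaxed minimizer actually lies in $X$, and the subsequent uniqueness argument via the two-parameter circular ansatz has nothing to apply to.
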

\begin{proof}
    We divide the proof into some steps.\\
    \\
    {\em Step 1.} We claim that the relaxed energy functional $\overline{\mathcal{F}}_\infty$ admits a minimizer on 
    \[
    X_r=\left\{\rho \in W^{1,1}_{\rm loc}(-h,h) : \rho\ge r\right\}.
    \]
    It is sufficient to apply \Cref{thm:minimo_rilassato} with the choice $g \colon [r,+\infty) \to \R$ given by $g(s)=\sfrac{1}{s}$. 
\\
\\
{\em Step 2.} Let $\rho_\infty \in X_r$ be a minimizer for $\overline{\mathcal{F}}_\infty$. We claim that $\rho_\infty\in C^2(-h,h)$ is strictly concave and $\rho_\infty>r$ on $(-h,h)$. Let $x_0 \in (-h,h)$ be a point such that $\rho_\infty > r$. By continuity, there exists an interval $(a,b) \ni x_0$ such that $\rho(x) >r$ for all $x \in (a,b)$.
Let $\phi\in C^1_{c}(a,b)$. For any $\sigma >0$ small enough, we have that $\rho_\infty+t\phi \in X_r$ for each $t \in (- \sigma, \sigma)$. Then, we easily obtain 
\[
0=\frac{d}{dt}_{\big|_{t=0}} \overline{\mathcal{F}}_\infty(\rho_\infty + t\phi) =\bigintsss_{a}^b \left(\frac{\rho_\infty'}{\rho_\infty  \sqrt{\left(\rho_\infty'\right)^2+1}} -\Theta\right)\phi'\, dx
\]
where
    $$\Theta(x):= \bigintsss_{a}^x -\frac{\sqrt{\left(\rho_\infty'\right)^2+1}}{\rho_\infty^2}\, d\vartheta.
    $$
    Notice that 
    \[
    |\Theta(x)|\le \frac{1}{r}\min_{X_r}\overline{\F}_\infty
    \]
    and therefore $\Theta$ is bounded and continuous. Applying Du Bois-Reymond Lemma, we deduce that 
\begin{equation}
    \label{eq:integrale_primo_variazionale}
    \frac{\rho_\infty'}{\rho_\infty  \sqrt{\left(\rho_\infty'\right)^2+1}} -\Theta = \Gamma, \quad \text{a.e.\,on $(a,b)$},
\end{equation}
where $\Gamma\in \R$.
If $\varphi \colon [0,+\infty) \to \R$ is given by 
\[
\varphi(x)=:\frac{x}{\sqrt{1+x^2}},
\]
then $\varphi^{-1}\in C^1([0, +\infty))$ and 
\begin{equation}\label{r}
{\rho}_\infty' = f^{-1}\left(\left(\Gamma+ \Theta\right)\rho_\infty\right)  \quad \hbox{a.e. on } (a,b).
\end{equation}
As a consequence, $\rho_\infty'\in W^{1,1}(a,b)$, which means $\rho_\infty\in W^{2,1}(a,b)$. Differentiating \eqref{eq:integrale_primo_variazionale}, we get  
\begin{equation}
      \label{e:rho''_rho_infty}
    \rho_\infty  \rho_\infty''+(\rho_\infty')^2+1=0, \quad \text{a.e.\,on $(a,b)$}.
\end{equation}
 Hence, $\rho_\infty\in C^2(a,b)$ and 
    \begin{equation*}
        \label{e:EL_infty}
        \rho_\infty''=-\frac{1+(\rho_\infty')^2}{\rho_\infty}<0,
    \end{equation*}
    implying that $\rho_\infty$ is strictly concave. Notice that equation \eqref{r} implies also that $\rho_\infty \in C^1(-h,h)$. Moreover, by concavity, it must be $\rho_\infty>r$ on $(-h,h)$.\\
    \\
    {\em Step 3.} We 
    check that boundary conditions are satisfied, namely $\rho_\infty(\pm h) = r$. Notice that by concavity the traces of $\rho_\infty$ are well defined in the usual Sobolev sense. We assume by contradiction that $\rho_\infty(h)\in(0,r)$. Let $\eta \in (0,h)$ and let $\zeta \in (\rho_\infty(h),r)$. Let $\rho_\eta\in X_r$ be given by 
\[
\rho_\eta(x)=\left\{
\begin{array}{ll}
\rho_\infty(x)& \text{if $x\in (-h,h-\eta)$}\\
\\
\displaystyle\frac{\zeta-\rho_\infty(h-\eta)}{\eta}(x-h)+\zeta& \text{if $x\in [h-\eta,h)$}
\end{array}\right.
\]
and let $p\colon(0,h)\to \R$ be  $p(\eta):=\overline{\mathcal{F}}_\infty(\rho_\eta)$. Since $\rho_\eta \to \rho_\infty$ uniformly on any $[a,b]\subset (-h,h)$, we get that $p$ is continuous on $[0,h)$ with $p(0) = \overline{\mathcal{F}}_\infty(\rho_\infty)$ and $p$ is differentiable on $(0,h)$.
By definition, for any $\eta>0$ we have 
\[
p(\eta)=\underbrace{\bigintsss_{-h}^{h-\eta}\frac{\sqrt{1+(\rho_\infty')^2}}{\rho_\infty}\,dx}_{\mathcal{T}_1}+\underbrace{\bigintsss_{h-\eta}^h
\frac{\sqrt{1+(\rho_\eta')^2}}{\rho_\eta}\,dx}_{\mathcal{T}_2}+\abs{\log(\rho_\infty(-h)) - \log r} + \abs{\log \zeta - \log r}.
\]
As a consequence we have 
\[
\lim_{\eta\to 0}\mathcal{T}_1'(\eta)=-\frac{\sqrt{1+(\rho'_\infty(h))^2}}{\rho_\infty(h)}\in [-\infty,0),
\] 
noticing that $\rho_\infty'(h) \in [-\infty,0)$ exists by the concavity of $\rho_\infty$ deduced in Step 3. Concerning $\mathcal{T}_2$, we have
\begin{align*}
\mathcal{T}_2(\eta) &=\bigintsss_{h-\eta}^h\frac{\sqrt{1+(\rho_\eta')^2}}{\rho_\eta}\,dx=
\frac{\sqrt{(\zeta-\rho_\infty(h-\eta))^2+\eta^2}}{\zeta-\rho_\infty(h-\eta)}\left(\log \zeta-\log\rho_\infty(h-\eta)\right)
\end{align*}
from which 
\begin{align*}
\lim_{\eta\to 0}\mathcal{T}_2'(\eta)=\frac{\rho_\infty'(h)}{\rho_\infty(h)}\in [-\infty,0).
\end{align*}
If $\rho_\infty'(h)=-\infty$, then 
\[
\lim_{\eta\to 0}p'(\eta)=-\infty.
\]
Otherwise, 
$$
p'(0) = \frac{1}{\rho_\infty(h)}\left(\rho'_\infty(h)- \sqrt{1+ (\rho'_\infty(h))^2}\right) <0.
$$
In any case, this contradicts the minimality of $\rho_\infty$. Then $\rho_\infty(h)=r$. Using the same argument we can prove that $\rho_\infty(-h)=r$.
\\
\\
{\em Step 4.} Finally, putting together all the information from the previous steps we can conclude: $\rho_\infty$ is a minimizer of $\F_\infty$ on
\[
\left\{\rho \in W^{1,1}_{\rm loc}(-h,h)\cap C^0([-h,h]) : \rho\ge r,\,\rho(\pm h)=r\right\}.
\]
Observe that for any $\rho\in X$ we have $\mathcal{F}_\infty(\rho \vee r) \leq \mathcal{F}_\infty(\rho)$. Indeed, if 
$$A=\left\{x\in (-h,h) : \rho(x)<r\right\},$$ 
then 
\[
\begin{aligned}
\mathcal{F}_\infty(\rho)&=\bigintsss_a^b \frac{1}{\rho}\sqrt{1+(\rho')^2} \,dx\\
&\ge \bigintsss_A \frac{1}{\rho}\,dx+\bigintsss_{(-h,h)\setminus A}\frac{1}{\rho}\sqrt{1+(\rho')^2} \,dx\\
&\ge \bigintsss_A \frac{1}{r}\,dx+\bigintsss_{(-h,h)\setminus A}\frac{1}{\rho}\sqrt{1+(\rho')^2} \,dx=\mathcal F_\infty(\rho\vee r).
\end{aligned}
\]
This implies that $\rho_\infty$ minimizes $\F_\infty$ also on the set 
\[
\left\{\rho \in W^{1,1}_{\rm loc}(-h,h)\cap C^0([-h,h]) : \rho(\pm h)=r\right\}.
\]
As a consequence, the Euler-Lagrange equation \eqref{e:rho''_rho_infty} holds true on $(-h,h)$. Moreover, since the energy functional does not explicitly depend on $x$, $\F_\infty$ has the first integral 
\begin{equation}
    \label{e:integrale_primo_rhoinfty}
    \frac{1}{\rho_\infty \sqrt{1+(\rho_\infty')^2}}=E_\infty,
\end{equation}
where $E_\infty$ is a positive constant. Moreover, \eqref{e:integrale_primo_rhoinfty} can be rewritten as 
\[
\rho_\infty^2(1+(\rho_\infty')^2)=\frac{1}{E_\infty^2}.
\]
Let $v=\rho_\infty^2$. Then 
\[
4v+(v')^2=\frac{4}{E_\infty^2}.
\]
Since $v'\geq 1$, we have $v\le \sfrac{1}{E_\infty^2}$. Setting 
\[
v=\frac{1}{E_\infty^2}-u^2,
\]
we obtain $(u')^2=1$. Since both $v$ and $u$ are smooth functions, either $u(x)=x+k$ or $u(x)=-x+k$. In both cases,  boundary conditions $\rho_\infty(\pm h)=r$ give $k=0$, implying that $u^2=x^2$ and that
\[
E_\infty=\frac{1}{\sqrt{h^2+r^2}}.
\]
Hence, we obtain
$$\rho_\infty(x)=\sqrt{h^2+r^2-x^2}$$ 
yielding the conclusion since $\rho_\infty\in W^{1,1}(-h,h)$, and thus $\rho_\infty\in X$.
\end{proof}
\begin{remark}
We point out that the functional $\F_\infty$ arises in the study of the geodesics on the  Poincar\'e half-plane. It is well known that the geodesics are either vertical half-lines or arcs of circumference meeting orthogonally on the $x$-axes. Nevertheless, in classical differential geometry geodesics are just smooth critical points of the energy functional, not necessarily minimizers of the energy functional in a suitable Sobolev space.
\end{remark}

Next, we prove that for any $c>0$, it is energetically convenient to stay below $\rho_\infty$.

\begin{lemma}\label{tilli2}
For all $c>0$ and for all $\rho \in X$, we have $\mathcal F_c(\rho \wedge \rho_\infty)\le \mathcal F_c(\rho)$.
\end{lemma}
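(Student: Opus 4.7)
The plan is to reduce, as is standard for such truncation inequalities, to a componentwise statement. Setting $A:=\{x\in(-h,h):\rho(x)>\rho_\infty(x)\}$, outside $A$ the two functionals agree because $\rho\wedge\rho_\infty=\rho$ there, so it is enough to prove, on each connected component $(a,b)$ of the open set $A$, that
\[
\int_a^b f(\rho)\sqrt{1+(\rho')^2}\,dx\ge \int_a^b f(\rho_\infty)\sqrt{1+(\rho_\infty')^2}\,dx.
\]
By absolute continuity of both $\rho$ and $\rho_\infty$ on $[-h,h]$, together with the boundary conditions $\rho(\pm h)=r=\rho_\infty(\pm h)$ and the maximality of $(a,b)$, one has $\rho(a)=\rho_\infty(a)$ and $\rho(b)=\rho_\infty(b)$.

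The core of the argument is a divergence-theorem trick based on the observation that $\rho_\infty$ parametrizes the upper half of the circle $x^2+y^2=R^2$, with $R:=\sqrt{h^2+r^2}$. I will introduce the radial vector field $V(x,y):=\frac{f(y)}{\sqrt{x^2+y^2}}(x,y)$ on the upper half plane. Using the algebraic identity $yf(y)=y^2+c$, a direct calculation gives
\[
\Diver V=\frac{(yf(y))'}{\sqrt{x^2+y^2}}=\frac{2y}{\sqrt{x^2+y^2}}\ge 0.
\]
Moreover, plugging $\rho_\infty'=-x/\rho_\infty$ and $x^2+\rho_\infty^2=R^2$ into the flux integrand along $\gamma_{\rho_\infty}$ yields the key identity
\[
\frac{\rho_\infty-x\rho_\infty'}{\sqrt{x^2+\rho_\infty^2}}=\sqrt{1+(\rho_\infty')^2}.
\]

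Applying Green's formula on the Lipschitz region $\Omega:=\{(x,y):a<x<b,\ \rho_\infty(x)<y<\rho(x)\}$ (whose boundary has no vertical pieces, thanks to $\rho=\rho_\infty$ at both $a$ and $b$) produces
\[
\int_a^b \frac{f(\rho)(\rho-x\rho')}{\sqrt{x^2+\rho^2}}\,dx -\int_a^b f(\rho_\infty)\sqrt{1+(\rho_\infty')^2}\,dx =\int_\Omega \frac{2y}{\sqrt{x^2+y^2}}\,dA \ge 0.
\]
To conclude it suffices to bound pointwise the integrand on the left by $f(\rho)\sqrt{1+(\rho')^2}$: the elementary inequality $(\rho-x\rho')^2\le (1+(\rho')^2)(x^2+\rho^2)$, which after expansion is nothing but $(x+\rho\rho')^2\ge 0$, gives $\frac{\rho-x\rho'}{\sqrt{x^2+\rho^2}}\le \sqrt{1+(\rho')^2}$, and since $f(\rho)>0$ one chains
\[
\int_a^b f(\rho)\sqrt{1+(\rho')^2}\,dx\ge \int_a^b \frac{f(\rho)(\rho-x\rho')}{\sqrt{x^2+\rho^2}}\,dx\ge \int_a^b f(\rho_\infty)\sqrt{1+(\rho_\infty')^2}\,dx.
\]
Summing over the (at most countably many) components of $A$ yields $\mathcal{F}_c(\rho\wedge\rho_\infty)\le \mathcal{F}_c(\rho)$.

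The hard part will be identifying the vector field $V$. Its form is dictated by two requirements: (i) along the circular arc $\gamma_{\rho_\infty}$ the normal flux $V\cdot\nu$ must equal $f(y)$, where $\nu$ is the outward radial unit normal, which forces $V$ to be purely radial with magnitude $f(y)$; and (ii) $\Diver V\ge 0$ throughout $\Omega$, which is automatic thanks to the identity $yf(y)=y^2+c$. A secondary technicality is the justification of Green's formula for merely $W^{1,1}$ competitors, handled either by a smooth approximation argument or, more directly, by Fubini combined with a one-dimensional integration by parts in the $x$-variable, in which the boundary contributions vanish precisely because $\rho(a)=\rho_\infty(a)$ and $\rho(b)=\rho_\infty(b)$.
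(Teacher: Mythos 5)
Your proof is correct, but it follows a genuinely different route from the paper's. The paper splits the integrand $f(\rho)\sqrt{1+(\rho')^2}=(\rho+c/\rho)\sqrt{1+(\rho')^2}$ into two pieces on each component $(a_\infty,b_\infty)$: the $c/\rho$ piece is controlled by invoking the minimality of $\rho_\infty$ for $\F_\infty$ (established in \Cref{lemma:esiste_minimo_Finfty}), while for the $\rho$ piece it uses $\rho\ge\rho_\infty$ to replace $\rho\sqrt{1+(\rho')^2}$ by $\rho_\infty\sqrt{1+(\rho')^2}$ and then a one-dimensional convexity/integration-by-parts argument, checking the sign of $\bigl(\rho_\infty\rho_\infty'/\sqrt{1+(\rho_\infty')^2}\bigr)'$. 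You instead build a single calibration: the radial field $V=f(y)(x,y)/\sqrt{x^2+y^2}$ has $\Diver V=(yf(y))'/\sqrt{x^2+y^2}=2y/\sqrt{x^2+y^2}\ge 0$ (here the specific form $yf(y)=y^2+c$ is what makes the sign work), its flux through the circular arc reproduces exactly $f(\rho_\infty)\sqrt{1+(\rho_\infty')^2}$, and Cauchy--Schwarz bounds its flux through any competitor graph by $f(\rho)\sqrt{1+(\rho')^2}$. Your argument treats both terms of $f$ at once, does not rely on the variational characterization of $\rho_\infty$ (only on its explicit formula, so it is logically more self-contained), and the singular point of $V$ at the origin is harmlessly away from $\Omega$ since $y\ge\rho_\infty\ge r>0$; the price is the two-dimensional divergence theorem on a region bounded by a merely $W^{1,1}$ graph, which, as you note, is handled by Fubini in $y$ plus a one-dimensional integration by parts in $x$ with vanishing boundary terms. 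Both proofs are sound; the paper's stays entirely within one-dimensional convexity estimates, yours is shorter once the field $V$ is found and would generalize to any weight $f>0$ with $(yf(y))'\ge0$ on the relevant range.
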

\begin{proof}
    If $\rho\le \rho_\infty$ everywhere, there is nothing to prove. 
    Otherwise, there is $x_0 \in (-h,h)$ such that $\rho(x_0)>\rho_\infty(x_0)$.
    By continuity, such a strict inequality is still valid around $x_0$. Let 
\[
a_\infty=\sup\left\{x \in (-h,x_0): \rho(x)>\rho(x_0)\right\}, \qquad b_\infty=\inf\left\{x \in (x_0,h): \rho(x)>\rho(x_0)\right\}.
\]
By construction, $\rho(a_\infty)=\rho_\infty(a_\infty)$, $\rho(b_\infty)=\rho_\infty(b_\infty)$ and $\rho(x)>\rho_\infty(x)$ for every $x\in (a_\infty,b_\infty)$. To conclude, it is sufficient to prove that 
\begin{equation}\label{stimalocale_E_infty}
\bigintsss_{a_\infty}^{b_\infty}\left(\rho_\infty+\frac{c}{\rho_\infty}\right)\sqrt{1+(\rho_\infty')^2}\,dx \le \bigintsss_{a_\infty}^{b_\infty}\left(\rho+\frac{c}{\rho}\right)\sqrt{1+(\rho')^2}\,dx.
\end{equation}
Since $\rho_\infty$ minimizes $\F_\infty$ we have 
$$
\bigintsss_{a_\infty}^{b_\infty} \frac{\sqrt{1+ (\rho'_\infty)^2}}{\rho_\infty}\, dx \leq \bigintsss_{a_\infty}^{b_\infty} \frac{\sqrt{1+ (\rho')^2}}{\rho}\, dx \qquad \forall\, \rho \in X.
$$
Since $\rho \geq \rho_\infty$ on $(a_\infty, b_\infty)$, in order to prove \eqref{stimalocale_E_infty} it is sufficient to show that 
\begin{equation}
    \label{e:condizione_toshow_tilli2}
    \bigintsss_{a_\infty}^{b_\infty} \rho_\infty\sqrt{1 + (\rho'_\infty)^2}\, dx \leq 
\bigintsss_{a_\infty}^{b_\infty} \rho_\infty \sqrt{1 + (\rho')^2}\, dx.
\end{equation}
Using the convexity of the function $x\mapsto \sqrt{1+x^2}$ and integrating by parts, we deduce that 
\[
\begin{aligned}
    \bigintsss_{a_\infty}^{b_\infty} \rho_\infty\left(\sqrt{1+(\rho')^2}- \sqrt{1+(\rho'_\infty)^2}\right)\, dx&\geq 
\bigintsss_{a_\infty}^{b_\infty}  \frac{\rho_\infty \rho'_\infty}{\sqrt{1 + (\rho_\infty')^2}}(\rho'-\rho_\infty')\, dx \\
&= - \bigintsss_{a_\infty}^{b_\infty}\left(\frac{\rho_\infty \rho'_\infty}{\sqrt{1 + (\rho_\infty')^2}}\right)' (\rho-\rho_\infty)\, dx.
\end{aligned}
\]
Using the expression of $\rho_\infty = \sqrt{h^2+r^2- x^2}$, we get 
$$
\left(\frac{\rho_\infty \rho'_\infty}{\sqrt{1 + (\rho_\infty')^2}}\right)' = \frac{2 x^2 - (h^2+ r^2)}{\sqrt{h^2+r^2- x^2}}\le 0
$$
since $-\sqrt{(h^2+ r^2)}< - h \leq x \leq h < \sqrt{(h^2+ r^2)}$, and this proves \eqref{e:condizione_toshow_tilli2}.
\end{proof}

Then, we prove that it is energetically convenient to stay above $\rho_0$; remember that $\rho_0$ is the unique minimizer of $\F_0$, namely the stable catenary connecting the given boundary data.

\begin{lemma}\label{tilli1}
For all $c>0$ and for all $\rho \in X$ we have $\mathcal F_c(\rho \vee \rho_0)\le \mathcal F_c(\rho)$.
\end{lemma}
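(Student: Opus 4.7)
The plan is to mirror the structure of the proof of \Cref{tilli2}. If $\rho \ge \rho_0$ throughout $[-h,h]$, the inequality is trivial; otherwise I decompose the open set $\{\rho < \rho_0\}$ into its maximal connected components $(a_i,b_i)$, on each of which $\rho(a_i)=\rho_0(a_i)$ and $\rho(b_i)=\rho_0(b_i)$ (using $\rho(\pm h)=\rho_0(\pm h)=r$ when an endpoint coincides with $\pm h$). Summing over the components, matters reduce to the local inequality
\[
\int_a^b f(\rho_0)\sqrt{1+(\rho_0')^2}\,dx \le \int_a^b f(\rho)\sqrt{1+(\rho')^2}\,dx
\]
on a single component $(a,b)$ with $\rho\le\rho_0$ in the interior and matching boundary data.

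Next I split $f(s)=s+c/s$. For the $s$-piece, the global $\mathcal{F}_0$-minimality of $\rho_0$ in $X$ combined with a standard gluing argument---one obtains a competitor in $X$ by replacing $\rho_0|_{(a,b)}$ with $\rho|_{(a,b)}$ and keeping $\rho_0$ elsewhere---immediately yields $\int_a^b \rho_0\sqrt{1+(\rho_0')^2}\,dx \le \int_a^b \rho\sqrt{1+(\rho')^2}\,dx$. For the $c/s$-piece, the pointwise inequality $c/\rho\ge c/\rho_0$ (since $\rho\le\rho_0$) reduces the task to proving
\[
\int_a^b \frac{\sqrt{1+(\rho_0')^2}}{\rho_0}\,dx \le \int_a^b \frac{\sqrt{1+(\rho')^2}}{\rho_0}\,dx.
\]
I plan to establish this via the tangent-line estimate $\sqrt{1+(\rho')^2} \ge \sqrt{1+(\rho_0')^2} + \rho_0'(\rho'-\rho_0')/\sqrt{1+(\rho_0')^2}$ (convexity of $p\mapsto\sqrt{1+p^2}$), dividing by $\rho_0>0$, integrating over $(a,b)$, and performing a single integration by parts. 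Since $\rho=\rho_0$ at $a$ and $b$, the boundary contributions vanish and the remaining volume term becomes $\int_a^b (\rho_0'/(\rho_0\sqrt{1+(\rho_0')^2}))'(\rho_0-\rho)\,dx$.

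The hard part will be justifying the sign of this last integrand. A direct computation using $\rho_0(x)=\Pi_0\cosh(x/\Pi_0)$ gives
\[
\bigg(\frac{\rho_0'}{\rho_0\sqrt{1+(\rho_0')^2}}\bigg)' = \frac{1-\sinh^2(x/\Pi_0)}{\Pi_0^2\cosh^3(x/\Pi_0)},
\]
which is non-negative iff $|x/\Pi_0|\le \sinh^{-1}(1)=\ln(1+\sqrt{2})$. This is where the stability hypothesis \eqref{cond_catenaria} enters decisively. Writing $r=\Pi_0\cosh(h/\Pi_0)$, the condition $h/r\le\omega=(1/\Xi)/\cosh(1/\Xi)$ becomes $(h/\Pi_0)/\cosh(h/\Pi_0)\le (1/\Xi)/\cosh(1/\Xi)$, and since $t\mapsto t/\cosh t$ is strictly increasing on the range relevant to the stable catenary (up to its maximum at $t^*$ determined by $t^*\tanh t^*=1$, with $1/\Xi<t^*$) one deduces $h/\Pi_0\le 1/\Xi$. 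Combined with the analytic fact $1/\Xi<\sinh^{-1}(1)$---a consequence of the defining equation $\Xi\tanh(1/\Xi)+\operatorname{sech}^2(1/\Xi)=\Xi$, and implicit in the analysis of $\Xi$ in \cite{bllm}---one obtains $|\sinh(x/\Pi_0)|<1$ throughout $[-h,h]$. Hence the derivative above is strictly positive on $(a,b)$ and, together with $\rho_0-\rho\ge 0$, the integration-by-parts term is non-negative, closing the estimate and completing the proof.
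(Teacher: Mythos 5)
Your proposal is correct and follows essentially the same route as the paper: reduce to a maximal component of $\{\rho<\rho_0\}$, handle the $s$-piece of $f$ by the $\mathcal F_0$-minimality of $\rho_0$ with a gluing competitor, reduce the $c/s$-piece via $c/\rho\ge c/\rho_0$ to a weighted length comparison, and close it by convexity, integration by parts, and the sign of $\bigl(\rho_0'/(\rho_0\sqrt{1+(\rho_0')^2})\bigr)'$. In fact your justification of that final sign — deriving $h/\Pi_0\le 1/\Xi<\sinh^{-1}(1)$ from \eqref{cond_catenaria} via the monotonicity of $t/\cosh t$ on the stable branch — is cleaner than the paper's, whose stated chain passes through the claim $\min_{x>0}\cosh x/x<1$, which is false as written (the minimum is $\approx 1.509$) even though the needed conclusion $\sinh(h/\Pi_0)<1$ is true.
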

\begin{proof}
If $\rho\ge \rho_0$ everywhere there is nothing to prove. Otherwise, we argue as in the previous proof. Let us assume that there is $x_0 \in (-h,h)$ such that $\rho(x_0)<\rho_0(x_0)$. By continuity this strict inequality remains valid around $x_0$. Let 
\[
a_0=\sup\left\{x \in (-h,x_0): \rho(x)<\rho(x_0)\right\}, \qquad b_0=\inf\left\{x \in (x_0,h): \rho(x)<\rho(x_0)\right\}.
\]
By construction, $\rho(a_0)=\rho_0(a_0)$, $\rho(b_0)=\rho_0(b_0)$ and $\rho(x)<\rho_0(x)$ for every $x\in (a_0,b_0)$. Repeating the argument of the previous proof, we need to prove that  
\begin{equation}\label{stimalocale_E_1}
\bigintsss_{a_0}^{b_0}\frac{1}{\rho_0}\sqrt{1+(\rho_0')^2}\,dx \le \bigintsss_{a_0}^{b_0}\frac{1}{\rho_0}\sqrt{1+(\rho')^2}\,dx.
\end{equation}
If $u(x):=\sqrt{1+x^2}$,  condition \eqref{stimalocale_E_1} reads as 
\begin{equation}\label{stimalocale_E_2}
\bigintsss_{a_0}^{b_0}\frac{1}{\rho_0}\left(u(\rho')-u(\rho_0')\right)\,dx\geq 0.
\end{equation}
Using the convexity of $u$ and integrating by parts, we obtain 
\[
\bigintsss_{a_0}^{b_0}\frac{1}{\rho_0}\left(u(\rho')-u(\rho_0')\right)\,dx\geq\bigintsss_{a_0}^{b_0}\frac{1}{\rho_0}u'(\rho_0')(\rho'-\rho_0')\,dx=
-\bigintsss_{a_0}^{b_0}\left(\frac{1}{\rho_0}u'(\rho_0')\right)'(\rho-\rho_0)\,dx.
\]
Since $\rho<\rho_0$ on $(a_0,b_0)$, to get \eqref{stimalocale_E_2}, we must show that 
\begin{equation}\label{stimalocale_E_3}
\left(\frac{1}{\rho_0}u'(\rho_0')\right)'\geq 0.
\end{equation}
Since $\rho_0(x)=\Pi_0\cosh\left(\sfrac{x}{\Pi_0}\right)$, we directly have  
\[
\frac{1}{\rho_0}u'(\rho_0')=\frac{\sinh\left(\sfrac{x}{\Pi_0}\right)}{\Pi_0\left(1+\sinh^2\left(\sfrac{x}{\Pi_0}\right)\right)}.
\]
The function 
\[
(0,+\infty)\ni x\mapsto \frac{\sinh x}{1+\sinh^2 x}
\]
is increasing on $(0,\sinh^{-1}1)$. 

Let $\beta$ be such that
\[
\min_{x>0}\frac{\cosh x}{x}=\frac{\cosh \beta}{\beta}<1.
\]
In particular, since $\sfrac{h}{\Pi_0}<\beta$, we deduce that 
\begin{equation}\label{hpi0}
\sinh \frac{h}{\Pi_0}<\sinh \beta=\frac{\cosh \beta}{\beta}<1.
\end{equation}
Applying \eqref{hpi0}, we obtain
\[
\sinh\frac{x}{\Pi_0}<\sinh\frac{h}{\Pi_0}<1,
\]
which ensures that \eqref{stimalocale_E_3} holds true and this concludes the proof.
\end{proof}

\subsection{Proof of \Cref{prop:esistenza_minimo_Ec}}

In order to prove the existence of minimizers for the energy functional $\mathcal{F}_c$, it is necessary to consider two different cases: $0<c <r^2$ and $c >r^2$. 
\subsubsection{Case 1: \texorpdfstring{$c<r^2$}{uno}}
\label{sec:c<r2}
In this section we take $c\in (0,r^2)$. The argument is similar to the one used in the proof of \Cref{lemma:esiste_minimo_Finfty}. First of all, by \Cref{thm:rilassato}, we define the extended energy functional as
\begin{equation*}
    \overline{\mathcal{F}}_c(\rho)=\mathcal{F}_c(\rho)+\left|F(\rho(-h))-F(r)\right|+\left|F(\rho(h))-F(r)\right|,
\end{equation*}
where $F$ is a primitive of $f = x + \sfrac{c}{x}$.  Precisely, the energy functional $\overline{\F}_c$ reads 
\begin{equation*}
    \overline{\mathcal{F}}_c(\rho)=\mathcal{F}_c(\rho)
+\left|\frac{(\rho(-h))^2}{2}+c\log(\rho(-h))-\frac{r^2}{2}-c\log r\right|
+\left|\frac{(\rho(h))^2}{2}+c\log(\rho(h))-\frac{r^2}{2}-c\log r\right|.
\end{equation*}
\begin{lemma} 
\label{prop:minimo_rilassato}
   Let 
   \[
   X_c= \left\{\rho\in W^{1,1}_{\rm loc}(-h,h):\rho \geq \sqrt{c}\right\}.
   \]
   The variational problem
    $$
    \min_{\rho \in X_c}\overline{\mathcal{F}}_c(\rho)
    $$
    admits a solution. Moreover, if $\rho_c$ is a minimizer of $\overline\F_c$, then there exists $x_0\in (-h,h)$ such that $\rho_c(x_0) > \sqrt{c}$.
\end{lemma}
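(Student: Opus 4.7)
The plan is to verify the two assertions separately.

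\textbf{Existence.} I would apply \Cref{thm:minimo_rilassato} to $\overline{\F}_c$ on $X_c$ with the choice $a_0=\sqrt c$ and $g=f$. Since $f\in C^\infty([\sqrt c,+\infty))$ with $f(s)=s+c/s>0$ and $f'(s)=1-c/s^2\ge 0$ on $[\sqrt c,+\infty)$, the function $f$ is strictly positive and monotone non-decreasing, so hypothesis (i) of that theorem is satisfied. (One could alternatively invoke hypothesis (ii), since $f'(\sqrt c)=0$ while $f''(\sqrt c)=2/\sqrt c>0$.) The theorem then delivers a minimizer $\rho_c\in X_c$ of $\overline{\F}_c$.

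\textbf{Non-triviality.} For the second assertion I would rule out the constant configuration $\rho\equiv\sqrt c$ by a direct one-dimensional comparison inside $X_c$. Recall that we are in the regime $c<r^2$, so $\sqrt c<r$. For every constant $a\in[\sqrt c,r]$ the function $\rho\equiv a$ lies in $X_c$, its derivative vanishes and its weak traces at $\pm h$ equal $a\le r$, so the representation formula for $\overline{\F}_c$ collapses to
\[
\Psi(a):=\overline{\F}_c(a)=2h\,f(a)+2\bigl(F(r)-F(a)\bigr).
\]
Differentiating gives $\Psi'(a)=2h f'(a)-2f(a)$, and at $a=\sqrt c$ we have $f'(\sqrt c)=0$ and $f(\sqrt c)=2\sqrt c$, so $\Psi'(\sqrt c)=-4\sqrt c<0$. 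Hence for every sufficiently small $\eta>0$ the constant $\sqrt c+\eta$ still belongs to $X_c$ (and lies below $r$), and it strictly decreases the energy: $\overline{\F}_c(\sqrt c+\eta)<\overline{\F}_c(\sqrt c)$. Consequently $\sqrt c$ cannot be a minimizer.

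\textbf{Conclusion.} Any minimizer $\rho_c$ furnished by the first step therefore satisfies $\overline{\F}_c(\rho_c)<\overline{\F}_c(\sqrt c)$, so $\rho_c$ cannot agree with $\sqrt c$ almost everywhere on $(-h,h)$. Since $\rho_c\in W^{1,1}_{\rm loc}(-h,h)$ admits a (locally) absolutely continuous representative, the set $\{x\in(-h,h):\rho_c(x)>\sqrt c\}$ is a nonempty open subset of $(-h,h)$, and any point $x_0$ in it gives the desired conclusion.

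The main conceptual point, which I expect to be the only delicate piece, is the non-triviality step: although $\sqrt c$ simultaneously minimizes $f$ pointwise on $[\sqrt c,+\infty)$ and annihilates $\rho'$ (so it is the unique unconstrained minimizer of $\F_c$ on $X_c$), the boundary penalty $2(F(r)-F(\sqrt c))$ it has to pay in the relaxed functional is strictly improvable by lifting the constant infinitesimally, precisely because $f(\sqrt c)=2\sqrt c>0$. The sign $\Psi'(\sqrt c)<0$ encodes exactly this competition between the bulk and boundary parts of $\overline{\F}_c$.
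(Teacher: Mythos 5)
Your proposal is correct and follows essentially the same route as the paper: existence via \Cref{thm:minimo_rilassato} applied to $f$ restricted to $[\sqrt c,+\infty)$ (the paper invokes hypothesis \emph{ii)} where you primarily cite \emph{i)}, but both apply), and non-triviality by the identical computation showing that raising the constant $\sqrt c$ strictly decreases $\overline{\F}_c$, i.e.\ $\frac{d}{dt}\overline{\F}_c(\sqrt c+t)\big|_{t=0}=-4\sqrt c<0$. Your final remark that a minimizer must then differ from $\sqrt c$ on a nonempty open set is a harmless elaboration of the paper's contradiction argument.
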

\begin{proof}
The existence of a minimizer follows just by applying \Cref{thm:minimo_rilassato} with $g=f_{\big|_{[\sqrt c,+\infty)}}$: indeed the function $g$ has a unique critical point $\overline{x} = \sqrt c$, and 
        $$
        g''(\sqrt c) = \frac{2}{\sqrt{c}} >0,
        $$
        satisfying $ii)$ of \Cref{thm:minimo_rilassato}. Let $\rho_c$ be a minimizer of $\overline\F_c$. Assume by contradiction that $\rho_c=\sqrt c$. For any $t\ge 0$ we have $\rho_c+t\in X_c$ and 
   \[
   \overline\F_c(\rho_c+t)=2h\left(\sqrt c+t+\frac{c}{\sqrt c+t}\right)+r^2-(\sqrt c+t)^2+2c\log r-2c\log(\sqrt c+t).
   \]
   Then 
   \[
   \frac{d}{dt}\overline\F_c(\rho_c+t)_{\big|_{t=0}}=-4\sqrt c<0
   \]
   which contradicts the fact that $\rho_c$ is a minimizer.
\end{proof}

\begin{remark}
We remark that, by \Cref{prop:minimo_rilassato}, the function $\rho_c = \sqrt{c}$ is not a critical point of $\overline{\F}_c$ whenever $c>0$. This is in contrast with the case $c=0$, where the problem reduces to that of minimal surfaces of revolution. Indeed, in that setting, the Goldschmidt solution $\rho = 0$ is a local minimizer of $\overline{\F}_0$ (see \cite[Step 6 of Theorem 3.3]{bllm}).
\end{remark}

To pass from a minimizer of the extended energy functional $\overline{\F}_c$ to a minimizer for the energy functional $\mathcal{F}_c$, we need more regularity.

\begin{lemma}
\label{lemma:variazione_prima}
    Let $\rho_c$ be a minimizer for $\overline{\mathcal{F}}_c(\rho)$ on $X_c$. Then $\rho_c \in C^1(-h,h)$. Moreover, let $(a,b)\subset (-h,h)$ be an interval with $\rho_c(x)>\sqrt c$ for any $x\in (a,b)$. Then $\rho_c \in C^2(a,b)$ and 
\begin{equation}
    \label{e:rho''}
    \rho_c''=\frac{((\rho_c)^2-c)(1+(\rho_c')^2)}{\rho_c((\rho_c)^2+c)}.
\end{equation}
    In particular, $\rho_c''>0$ for all $x \in (a,b)$, namely $\rho_c$ is a strictly convex function on $(a,b)$. 
\end{lemma}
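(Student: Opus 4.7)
The approach parallels Step 2 of the proof of \Cref{lemma:esiste_minimo_Finfty}. Fix an interval $(a,b)\subset(-h,h)$ on which $\rho_c>\sqrt c$. For every $\phi\in C^1_c(a,b)$ and every $t$ with $|t|$ sufficiently small, $\rho_c+t\phi\in X_c$ (the obstacle is inactive on the support of $\phi$), and since $\phi$ vanishes near $\pm h$ the boundary trace contributions to $\overline{\F}_c$ do not change. Introducing the auxiliary function
\[
\Theta(x):=\int_a^x f'(\rho_c)\sqrt{1+(\rho_c')^2}\,d\vartheta,
\]
which is bounded and continuous on $[a,b]$ (since $\rho_c\ge\sqrt c$ forces $0\le f'(\rho_c)\le 1$ and $\sqrt{1+(\rho_c')^2}\in L^1(a,b)$), integrating the $\phi$-term by parts and using $\phi(a)=\phi(b)=0$, the vanishing first variation $\frac{d}{dt}\overline{\F}_c(\rho_c+t\phi)|_{t=0}=0$ takes the form
\[
\int_a^b\left(\frac{f(\rho_c)\rho_c'}{\sqrt{1+(\rho_c')^2}}-\Theta\right)\phi'\,dx=0 \qquad \forall\,\phi\in C^1_c(a,b),
\]
so du Bois--Reymond yields $f(\rho_c)\rho_c'/\sqrt{1+(\rho_c')^2}=\Theta+\Gamma$ a.e.\ on $(a,b)$ for some $\Gamma\in\R$.

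Since $f(\rho_c)\ge 2\sqrt c>0$, inverting the smooth bijection $y\mapsto y/\sqrt{1+y^2}$ from $\R$ onto $(-1,1)$ yields the pointwise formula
\[
\rho_c'=\frac{\Theta+\Gamma}{\sqrt{f(\rho_c)^2-(\Theta+\Gamma)^2}}\qquad\text{a.e.\ on }(a,b),
\]
whose right-hand side is continuous in $x$; hence $\rho_c$ has a $C^1$ representative on $(a,b)$, and a standard bootstrap (once $\rho_c\in C^1$, both $\Theta$ and $f(\rho_c)$ are $C^1$) upgrades this to $\rho_c\in C^2(a,b)$. Differentiating the first-integral identity $f(\rho_c)\rho_c'/\sqrt{1+(\rho_c')^2}=\Theta+\Gamma$ and simplifying gives
\[
\frac{f(\rho_c)\,\rho_c''}{(1+(\rho_c')^2)^{3/2}}=\frac{f'(\rho_c)}{\sqrt{1+(\rho_c')^2}},
\]
i.e.\ $\rho_c''=f'(\rho_c)(1+(\rho_c')^2)/f(\rho_c)$. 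Substituting $f(s)=s+c/s$ and $f'(s)=(s^2-c)/s^2$ produces the announced formula, and strict positivity of $\rho_c''$ follows from $\rho_c^2>c$ on $(a,b)$, yielding strict convexity.

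For the global regularity $\rho_c\in C^1(-h,h)$, the set $U:=\{\rho_c>\sqrt c\}$ is open and nonempty by \Cref{prop:minimo_rilassato}; on each of its components the preceding analysis applies and $\rho_c'$ is continuous, while on $(-h,h)\setminus U$ one has $\rho_c\equiv\sqrt c$ and $\rho_c'\equiv 0$. At any transition point $x_1\in\partial U\cap(-h,h)$, test functions $\phi\in C^1_c(-h,h)$ supported in a small neighborhood of $x_1$ and vanishing both at $x_1$ and on the constant component adjacent to $x_1$ are admissible with either sign of $t$; the resulting equality first variation forces the Weierstrass--Erdmann continuity of the momentum $f(\rho_c)\rho_c'/\sqrt{1+(\rho_c')^2}$ across $x_1$. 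Since this quantity is zero on the constant side and $f(\sqrt c)=2\sqrt c>0$, it follows that $\rho_c'(x_1^\pm)=0$, giving continuity of $\rho_c'$ through $x_1$ and hence $\rho_c\in C^1(-h,h)$. The main technical subtlety is precisely this last matching step at a free-boundary point of the obstacle $\rho\ge\sqrt c$: the interior first-variation analysis on $\{\rho_c>\sqrt c\}$ is routine, but recovering the corner condition in the presence of the unilateral constraint requires the careful choice of admissible test functions described above.
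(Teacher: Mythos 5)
Your interior analysis is exactly the paper's argument: two-sided variations on an interval where $\rho_c>\sqrt c$, the auxiliary primitive $\Theta$, du Bois--Reymond, inversion of $y\mapsto y/\sqrt{1+y^2}$ to get continuity of $\rho_c'$, bootstrap to $C^2$, and differentiation of the first integral to obtain \eqref{e:rho''}; the algebra $\rho_c''=f'(\rho_c)(1+(\rho_c')^2)/f(\rho_c)$ and the convexity conclusion are correct. (The paper itself dispatches the global statement $\rho_c\in C^1(-h,h)$ with a one-line remark, so your attempt to treat the contact set $\{\rho_c=\sqrt c\}$ explicitly is in principle a welcome addition.)

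However, the mechanism you propose at a free-boundary point $x_1\in\partial\{\rho_c>\sqrt c\}$ does not work as stated. You test with $\phi$ vanishing at $x_1$ and on the adjacent constancy component, and claim the resulting \emph{equality} of the first variation forces continuity of the momentum $p:=f(\rho_c)\rho_c'/\sqrt{1+(\rho_c')^2}$ across $x_1$. But the Weierstrass--Erdmann jump term produced by integration by parts is $\bigl(p(x_1^+)-p(x_1^-)\bigr)\phi(x_1)$, which is annihilated precisely because you imposed $\phi(x_1)=0$; on the open component where $\rho_c>\sqrt c$ the Euler--Lagrange equation already holds, so such test functions yield $0=0$ and no corner condition. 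To close the argument you need test functions that do not vanish at $x_1$, and these are only admissible as \emph{one-sided} perturbations: taking $\phi\ge 0$ supported near $x_1$ with $\phi(x_1)=1$ and $t\ge 0$ (so that $\rho_c+t\phi\ge\sqrt c$ trivially), the inequality $\frac{d}{dt}\overline{\F}_c(\rho_c+t\phi)\big|_{t=0^+}\ge 0$, combined with the identity $p'=f'(\rho_c)\sqrt{1+(\rho_c')^2}$ on the non-contact side and the vanishing of all integrands on the constant side, reduces to $-p(x_1^+)\ge 0$. On the other hand, convexity of $\rho_c$ on the adjacent component together with $\rho_c>\rho_c(x_1)=\sqrt c$ there gives $\rho_c'(x_1^+)\ge 0$, i.e.\ $p(x_1^+)\ge 0$. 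Hence $p(x_1^+)=0$ and, since $f(\sqrt c)=2\sqrt c>0$, $\rho_c'(x_1^+)=0$, which matches the zero derivative on the constant side and yields $C^1$ regularity across $x_1$. With this replacement your proof is complete; as written, the matching step is a genuine gap.
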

\begin{proof}
Since $\rho_c$ is a minimizer for the extended energy functional $\overline{\mathcal{F}}_c(\rho)$, we can compute free outer variations in $(a,b) \subset (-h,h)$.
Let $\psi \in C^\infty_c(a,b)$ and let $\sigma >0$ be small enough. Then, for all $t \in (-\sigma, \sigma)$, we have $\rho_c + t \psi \in X_c$. Thus, we get
$$
0 = \frac{d}{dt}\overline{\mathcal{F}}_c(\rho_c+t\psi)\at[\Big]{t = 0}{}=
\bigintsss_{a}^b \psi \left(1- \frac{c}{(\rho_c)^2}\right)\sqrt{1+(\rho_c')^2}\, dx+
\bigintsss_{a}^b \frac{\psi'\rho_c'}{\sqrt{1+(\rho_c')^2}} \left(\rho_c + \frac{c}{\rho_c}\right)\, dx.
$$
Let
\begin{equation}
    \label{e:Theta}
    \left\{\begin{aligned}
    (a, b)&\to \R\\
    x&\mapsto \Theta(x):=\bigintsss_{a}^x \left(1- \frac{c}{(\rho_c)^2}\right)\sqrt{1+(\rho_c')^2}\, d\vartheta.
\end{aligned}
\right.
\end{equation}
Then, we obtain
$$
\bigintsss_{a}^b\left(\frac{\rho_c'}{\sqrt{1+(\rho_c')^2}} \left(\rho_c + \frac{c}{\rho_c}\right)-\Theta\right)\psi'\, dx=0.
$$
By Du Bois-Reymond lemma, we deduce that
\begin{equation}
\label{e:EL}
    \frac{\rho_c'}{\sqrt{1+(\rho_c')^2}} \left(\rho_c + \frac{c}{\rho_c}\right)-\Theta=\Gamma \quad \hbox{a.e.\,on } (a,b),
\end{equation}
where $\Gamma$ is a constant. Let $\varphi \colon [0,+\infty) \to \R$ be given by 
$$
\varphi(x):= \frac{x}{\sqrt{1+x^2}}.
$$
Then, 
since $\varphi^{-1}\in C^1(0, +\infty)$, we get
\begin{equation}\label{r1}
\rho_c' = \varphi^{-1}\left(\frac{\Gamma+ \Theta}{\rho_c + \sfrac{c}{\rho_c}}\right)  \quad \hbox{a.e.\,on } (a,b).
\end{equation}
Thus, $\rho_c' \in W^{1,1}(a,b)$ implying that $\rho_c \in W^{2,1}(a,b)$ and since we are in a one-dimensional setting, $\rho_c \in C^1([a,b])$. Moreover, we remark that equation \eqref{r1} implies also that $\rho_c \in C^1(-h,h)$. Integrating \eqref{e:EL} and using the expression of $\Theta$ in \eqref{e:Theta}, we get
$$
(1+(\rho_c')^2)\left(1-\frac{c}{(\rho_c)^2}\right)-\left(\rho_c+\frac{c}{\rho_c}\right)\rho_c''=0
$$
getting 
\[
    \rho_c''=\frac{((\rho_c)^2-c)(1+(\rho_c')^2)}{\rho_c((\rho_c)^2+c)}
\]
which is \eqref{e:rho''}, and this yields the conclusion. 
\end{proof}
We are finally in position to prove \Cref{prop:esistenza_minimo_Ec}.

\begin{proof}[Proof of \Cref{prop:esistenza_minimo_Ec}]
Let $\rho_c$ be a minimizer for $\overline{\mathcal{F}}_c$ on $X_c$. 
We divide the proof in two steps.\\
\\
{\sl Step 1.} We claim that there are no points $x_0\in (-h,h)$ such that $\rho_c(x_0) = \sqrt{c}$. Let us assume by contradiction that there exists $x_0\in (-h,h)$ such that $\rho_c(x_0) = \sqrt{c}$. Without loss of generality, we can assume $\rho_c(x)>\sqrt c$ on $(x_0-\delta,x_0)$ for some $\delta>0$.
For sure $\rho_c'(x_0)= 0$ since $\rho_c \in C^1(-h,h)$. However, neither $\rho_c'(x_0)$ can vanish. Indeed, if we consider the Cauchy problem 
\[
\left\{\begin{array}{ll}
\displaystyle \rho_c''=\frac{(\rho_c^2-c)(1+(\rho_c')^2)}{\rho_c(\rho_c^2+c)},\\
\\
\rho_c(x_0)=\sqrt c,\\
\\
\rho_c'(x_0)=0,
\end{array}\right.
\]
this has two different local solutions for $x\ge x_0$: $\rho_c$ and the constant $\sqrt c$,  which is a contradiction. Hence, $\rho_c > \sqrt{c}$ for all $x \in (-h,h)$.\\
\\
{\sl Step 2.} We claim that $\rho_c(\pm h) = r$ in the Sobolev sense. Combining \Cref{lemma:variazione_prima} with Step 1, we can deduce that the function $\rho_c$ is convex on $(-h,h)$. In particular, the traces $\rho_c(\pm h)$ exist in the usual Sobolev sense. Furthermore, $\rho_c(\pm h)\ge \sqrt c$.
We prove now that $\rho_c(h)=r$; the other case follows similarly. 
We assume by contradiction that $\rho_c(h)\in [\sqrt c,r)$. Let $\eta \in (0,h)$ and let $\zeta \in (\rho_c(h),r)$.
Let $\rho_\eta$ be given by 
\[
\rho_\eta(x)=\left\{\begin{array}{ll}
\rho_c(x)& \text{if $x\in (-h,h-\eta)$}\\
\\
\displaystyle\frac{\zeta-\rho_c(h-\eta)}{\eta}(x-h)+\zeta& \text{if $x\in [h-\eta,h)$}
\end{array}\right.
\]
and let $p\colon(0,h)\to \R$ be given by $p(\eta)=\overline{\mathcal{F}}_c(\rho_\eta)$. 
By definition, for any $\eta>0$ we have 
\begin{equation}
    \label{e:Geta}
    \begin{aligned}
         p(\eta)&=\underbrace{\int_{-h}^{h-\eta}\left(\rho_c+\frac{c}{\rho_c}\right)\sqrt{1+(\rho'_{\color{black}c\color{black}})^2}\,dx}_{\mathcal{T}_1}+\underbrace{\int_{h-\eta}^h\left(\rho_\eta+\frac{c}{\rho_\eta}\right)\sqrt{1+(\rho_\eta')^2}\,dx}_{\mathcal{T}_2}\\
&+\left|\frac{\rho_{\color{black}c\color{black}}(-h)^2}{2}+c\log\rho_{\color{black}c\color{black}}(-h)-\frac{r^2}{2}-c\log r\right|+\left|\frac{\zeta^2}{2}+c\log \zeta-\frac{r^2}{2}-c\log r\right|.
    \end{aligned}  
\end{equation}
Since $\rho_\eta \to \rho_c$ uniformly on any $[a,b]\subset (-h,h)$, we deduce that we have
    $$\lim_{\eta\to 0^+}\overline{\mathcal{F}}_c(\rho_\eta)=\overline{\mathcal{F}}_c(\rho_c),$$
   implying that the function $p$ is continuous on $[0,h)$ with $p(0)=\overline{\mathcal{F}}_c(\rho_c)$. Moreover, the function $p$ is differentiable on $(0,h)$. Differentiating each term of \eqref{e:Geta}. We obtain
\[
\mathcal{T}_1'(0)=-\left(\rho_c(h)+\frac{c}{\rho_c(h)}\right)\sqrt{1+(\rho'_c(h))^2},
\] 
where we stress that $\rho'_c(h)$ exists and it is finite by \Cref{lemma:variazione_prima} and by \Cref{tilli1}. 
Computing $\mathcal{T}_2$, we get
\begin{align*}
\mathcal{T}_2(\eta) &=\bigintsss_{h-\eta}^h\left(\rho_\eta+\frac{c}{\rho_\eta}\right)\sqrt{1+(\rho_\eta')^2}\,dx\\ 
&=\frac{\sqrt{(\zeta-\rho_c(h-\eta))^2+\eta^2}}{\zeta-\rho_c(h-\eta)}\left(\frac{\zeta^2}{2}-\frac{\rho_c(h-\eta)^2}{2}+c\log \zeta-c\log\rho_c(h-\eta)\right)
\end{align*}
from which 
\begin{align*}
\mathcal{T}_2'(0)= \frac{\left(c + (\rho_c(h))^2\right)\rho_c'(h)}{\rho_c(h)}.
\end{align*}
Then, collecting everything, it holds
$$
p'(0) = \left(\rho_c(h)+\frac{c}{\rho_c(h)}\right)\left(-\sqrt{1+(\rho'_c(h))^2} + \rho'_c(h)\right)<0
$$
contradicting the minimality of $\rho_c$.
\\
\\
{\sl Step 3}. We can now conclude the proof. We can say that $\rho_c$ is a minimizer of $\F_c$ on
\[
\left\{\rho \in W^{1,1}_{\rm loc}(-h,h)\cap C^0([-h,h]) : \rho\ge \sqrt c,\,\rho(\pm h)=r\right\}.
\]
Observe that for any $\rho\in X$, we have $\mathcal{F}_c(\rho \vee \sqrt c) \leq \mathcal{F}_c(\rho)$. Indeed, if 
$$A=\left\{x\in (-h,h) : \rho(x)<\sqrt c\right\},$$
then 
\[
\begin{aligned}
\mathcal{F}_c(\rho)&=\bigintsss_a^b f(\rho)\sqrt{1+(\rho')^2} \,dx\\
&\ge \bigintsss_A f(\rho)\,dx+\bigintsss_{(-h,h)\setminus A}f(\rho)\sqrt{1+(\rho')^2} \,dx\\
&\ge \bigintsss_A 2\sqrt c\,dx+\bigintsss_{(-h,h)\setminus A}f(\rho)\sqrt{1+(\rho')^2} \,dx=\mathcal F_c(\rho\vee \sqrt c).
\end{aligned}
\]
Then $\rho_c$ minimizes $\F_c$ also on the set 
\[
\left\{\rho \in W^{1,1}_{\rm loc}(-h,h)\cap C^0([-h,h]) : \rho(\pm h)=r\right\}.
\]
By \Cref{tilli1}, $\rho_c \ge \rho_0$, implying that actually $\rho_c \in X$, and this ends the proof.
\end{proof}

\subsubsection{Case 2: \texorpdfstring{$c>r^2$}{due}}
\label{sec:c>r^2}
The analysis of the previous section can be adapted to the case $c>r^2$. Indeed, it is sufficient to minimize the functional $\overline{\mathcal{F}}_c$ on the set 
   \[
   X_c= \left\{\rho\in W^{1,1}_{\rm loc}(-h,h):\rho \leq \sqrt{c}\right\}.
   \]
   In the relaxation process of $\F_c$, here we are considering as $f$ the function $f_{\big|_{[c_0,+\infty)}}$, where we recall that $c_0=\min\rho_0$. This choice prevents the fact that $f$ blows up when $x\to 0^+$. In this way, the variational problem
    $$
    \min_{\rho \in X_c}\overline{\mathcal{F}}_c(\rho)
    $$
    admits a solution $\rho_c \in X_c$. From now on, one can repeat the proof along the same lines. We deduce that $\rho_c<\sqrt c$ and this permits to say that $\rho_c$ is strictly concave computing a first variation around a point where $\rho_c<\sqrt c$. 
    We then perturb the boundary values proving that actually $\rho_c(\pm h)=r$ in the Sobolev sense: here, it is crucial to use \Cref{tilli2} that guarantees that $\rho_c'(\pm h)$ is finite. Finally, it holds 
    \[
    \F_c(\rho\vee \sqrt c)\le \F_c(\rho)
    \]
    for any $\rho \in X$ whenever $c>r^2$ and then $\rho_c\in X$ is a minimizer of $\F_c$ and $\rho_c \in C^2([-h,h])$.

\begin{remark}
\label{rem:integrale_primo}
    As a consequence of \Cref{prop:esistenza_minimo_Ec}, in both cases $c<r^2$ and $c>r^2$, we have obtained that there exists a minimizer $\rho_c \in W^{1,1}(-h,h) \cap C^2([-h,h])$ satisfying the boundary conditions $\rho_c(\pm h) = r$. Moreover, performing an outer first variation, we can say that $\rho_c $ is a strictly positive solution of the boundary value problem 
    \begin{equation}
    \label{bvp1}
        \left\{
        \begin{aligned}
            &\rho''=\frac{(\rho^2-c)(1+(\rho')^2)}{\rho(\rho^2+c)},\\
            &\rho(-h)=r,\\
            &\rho(h)=r.
        \end{aligned}\right.
    \end{equation}
We will investigate such a problem in the next section and this will complete the proof of \Cref{thm_existence}.
\end{remark}

\section{Solutions of a boundary value problem}
\label{sec:studio_bvp}

By \Cref{rem:integrale_primo}, to better geometrically characterize the minimizer of $\mathcal{F}_c$ on $X$, we need to study the 
second order boundary value problem coming from the Euler-Lagrange equations, namely \eqref{bvp1}.

\begin{proposition}
\label{prop:proprieta_bvp}
Let $\rho \in C^2([-h,h])$ be a strictly positive solution of the boundary value problem 
\begin{equation}\label{bvp}
\left\{\begin{array}{ll}
\displaystyle \rho''=\frac{(\rho^2-c)(1+(\rho')^2)}{\rho(\rho^2+c)}\\
\\
\rho(-h)=\rho(h)=r.
\end{array}\right.
\end{equation}
Then the following facts hold true.
\begin{itemize}
\item[a)] If $c<r^2$, then 
\begin{equation}\label{rho-}
\rho(x)=\sqrt{ \frac{E}{2}\sqrt{E^2-4c}\cosh\frac{2x}{E}+\frac{E^2}{2}-c}
\end{equation}
where $E= E(c)$ is a positive constant that satisfies 
\begin{equation}\label{p-}
\cosh\frac{2h}{E}=\frac{2r^2-E^2+2c}{E\sqrt{E^2-4c}}.
\end{equation}
Moreover, $\rho$ is even, strictly convex and $\rho_0<\rho<r$ on $(-h,h)$.
\item[b)] If $c>r^2$, then 
\begin{equation}\label{rho+}
\rho(x)=\sqrt{-\frac{E}{2}\sqrt{E^2-4c}\cosh\frac{2x}{E}+\frac{E^2}{2}-c}
\end{equation}
where $E= E(c)$ is a positive constant that satisfies 
\begin{equation}\label{p+}
\cosh\frac{2h}{E}=-\frac{2r^2-E^2+2c}{E\sqrt{E^2-4c}}.
\end{equation} 
Moreover, $\rho$ is even, strictly concave and $r<\rho$ on $(-h,h)$.
\end{itemize}
Moreover, both in cases a) and c), it holds 
\begin{equation}\label{Ec}
E^2-4c>0.
\end{equation}
\end{proposition}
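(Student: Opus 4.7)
My plan is to integrate the ODE in \eqref{bvp} explicitly by exploiting its autonomous structure, then identify the two signed branches of solutions with the two cases by an algebraic sign check.

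Since the Lagrangian $L(\rho,\rho')=f(\rho)\sqrt{1+(\rho')^2}$ is independent of $x$, Beltrami's identity produces the first integral
\[
\frac{\rho^2+c}{\rho\sqrt{1+(\rho')^2}}=E
\]
for some constant $E>0$; this can be verified by differentiating the left-hand side along any solution of \eqref{bvp}. Squaring and setting $u:=\rho^2$, so that $(u')^2=4u(\rho')^2$, the conservation law becomes
\[
\frac{E^2(u')^2}{4}=(u+c)^2-E^2 u=\left(u-\frac{E^2-2c}{2}\right)^2-\frac{E^2(E^2-4c)}{4}.
\]
If $E^2\le 4c$, the right-hand side is strictly positive on $[-h,h]$ except possibly at one point (the limiting case $E^2=4c$ would force $u\equiv c$ and hence $c=r^2$, excluded); then $u$ would be strictly monotone, contradicting $u(-h)=u(h)=r^2$, and this establishes \eqref{Ec}. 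With $E^2>4c$ the ODE is separable and integrates to
\[
u(x)=\frac{E^2-2c}{2}\pm\frac{E\sqrt{E^2-4c}}{2}\cosh\frac{2(x-x_0)}{E},
\]
and $u(-h)=u(h)=r^2$ together with the evenness of $\cosh$ forces $x_0=0$. Hence $\rho$ is even and has the form \eqref{rho-} or \eqref{rho+} according to the chosen sign; substituting $x=h$ yields the transcendental equation \eqref{p-} or \eqref{p+} respectively.

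Identification of the two signs with the two cases is a short algebraic check: using $E^2>4c$ one has
\[
\frac{E^2-2c}{2}+\frac{E\sqrt{E^2-4c}}{2}-c=\frac{\sqrt{E^2-4c}\,\bigl(\sqrt{E^2-4c}+E\bigr)}{2}>0,
\]
\[
\frac{E^2-2c}{2}-\frac{E\sqrt{E^2-4c}}{2}-c=\frac{\sqrt{E^2-4c}\,\bigl(\sqrt{E^2-4c}-E\bigr)}{2}<0.
\]
The $+$ sign thus forces $\rho^2>c$ on $[-h,h]$, and \eqref{bvp} yields $\rho''>0$; evaluating at $x=\pm h$ gives $c<r^2$, case a). Symmetrically, the $-$ sign yields $\rho^2<c$, $\rho''<0$, and $c>r^2$, case b). Strict convexity (resp. concavity) together with $\rho(\pm h)=r$ and $\rho\not\equiv r$ then gives $\rho<r$ (resp. $\rho>r$) on $(-h,h)$.

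The only non-algebraic claim is the lower bound $\rho_0<\rho$ in case a), which I regard as the main obstacle. The transcendental equation \eqref{p-} may a priori admit more than one positive root $E$ (for $c=0$ two of them degenerate into the stable and unstable catenaries $\Pi_0$ and $\Pi_1$), so the bound cannot follow from the explicit formula alone. I would deduce it by tracking the branch of solutions with $E(0)=\Pi_0$: from \eqref{p-} one extracts the strict inequality $E(c)>\Pi_0$ for $c>0$, and combining this with the monotone dependence of the profile \eqref{rho-} on $E$ yields $\rho>\rho_0=\Pi_0\cosh(x/\Pi_0)$ on $(-h,h)$. These monotonicity statements are precisely the content of the uniqueness-above-$\rho_0$ lemmas announced in the strategy section.
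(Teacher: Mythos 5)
Your derivation is correct and takes a slightly different route from the paper's. The paper first pins down the sign of $\rho^2-c$ by a qualitative ODE argument (a first touching point of $\rho$ with the level $\sqrt c$ together with uniqueness for the Cauchy problem), then uses this both to get $E>2\sqrt c$ from the value of the first integral at an interior critical point and to select the branch $v\ge E/2$ in the substitution $v=\tfrac{E}{2}\cosh u$. You instead integrate the conservation law first, obtaining both signed branches, derive \eqref{Ec} directly from the structure of the first-order equation for $u=\rho^2$ (strict monotonicity of $u$ would violate $u(\pm h)=r^2$ when $E^2<4c$, and $E^2=4c$ forces the excluded constant solution), and only afterwards match each sign to a regime of $c$ by the algebraic computation of $u-c$ and evaluation at the endpoints. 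Both arguments are sound; yours is somewhat more economical because the sign of $\rho^2-c$, the convexity/concavity, and the comparison with $r$ all fall out of the explicit formula, whereas the paper needs them as input. The small gluing issues (a solution sitting at the turning value of $u$, or the degenerate case $E^2=4c$) are resolved, as you implicitly use, by uniqueness for the second-order Cauchy problem, exactly as in the paper's Step 1.

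The one genuine gap is the inequality $\rho_0<\rho$ in case a), and you are right to single it out: it does \emph{not} follow from the explicit expression, because \eqref{p-} can have several roots $E$ and the corresponding profiles need not all dominate the catenary. Indeed, for $c=0$ the unstable catenary $\Pi_1\cosh(x/\Pi_1)$ solves the BVP and lies below $\rho_0$ at $x=0$, and by continuity the same happens for small $c>0$; the paper's own numerical section confirms the existence of BVP solutions below $\rho_0$. So the assertion can only be meant for the distinguished root $\smileacc{E}(c)>\Pi_0$ of \Cref{lemma:unicita_soluzioni_1}, and the paper's proof (which dismisses it as ``a direct consequence of the explicit expression'') is no more complete than yours on this point. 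Your proposed fix — tracking the branch with $E(0)=\Pi_0$ and invoking monotone dependence of the profile on $E$ — is the right idea but is only a sketch: the map $E\mapsto\rho(x;E,c)$ in \eqref{rho-} is not obviously monotone in $E$ for fixed $x$ (the prefactor grows while $\cosh(2x/E)$ shrinks), and the comparison with $\rho_0$ also crosses the parameter $c$, so one would need the quantitative content of \Cref{lemma:monotonia_inc} (monotonicity of $\rho_c(0)$ along branches) together with an argument that two distinct profiles of the family cannot cross twice in $(-h,h)$. As written, this step should either be restricted to the root $\smileacc{E}(c)$ and proved via those lemmas, or obtained for the minimizer from \Cref{tilli1} as the paper does elsewhere.
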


\begin{proof}
    We divide the proof into some steps.\\
    \\
    {\it Step 1.} First of all, the following holds: 
    \begin{itemize}
\item[i)] If $c<r^2$, then $\rho>\sqrt c$ everywhere. In particular, $\rho$ is strictly convex.
\item[ii)] If $c>r^2$, then $\rho<\sqrt c$ everywhere. In particular, $\rho$ is strictly concave.
\end{itemize}
Let us show i) and we remark that using the same technique ii) holds true. Let $c < r^2$. We proceed by contradiction. Let $x_0 \in (-h, h)$ be the first point such that $\rho(x_0) = \sqrt{c}$. If $\rho'(x_0) \neq 0$, then necessarily $\rho'(x_0) < 0$, implying that there exists $x_1 > x_0$ such that $\rho(x_1) < \sqrt{c}$.
Consequently, $\rho$ is concave in a neighborhood of $x_1$. This yields a contradiction because, to satisfy the boundary condition at $h$, $\rho$ would need to become convex while remaining strictly below $\sqrt{c}$. 
Thus, $\rho'(x_0) = 0$ and $\rho\ge \sqrt c$ everywhere. But also in this case we reach a contradiction. 
Indeed, in this case the Cauchy problem
\[
\left\{\begin{array}{ll}
\displaystyle \rho''=\frac{(\rho^2-c)(1+(\rho')^2)}{\rho(\rho^2+c)}\\
\\
\rho(x_0)=\sqrt c\\
\\
\rho'(x_0)=0
\end{array}\right.
\]
has two different local solutions for $x\ge x_0$: $\rho$ and the constant $\sqrt c$.
\\
\\
{\it Step 2.} By \Cref{rem:integrale_primo}, \eqref{bvp} comes from the Euler-Lagrange equation of $\mathcal{F}_c$. Moreover, we notice that $\mathcal{F}_c$ does not explicitly depend on $x$, then by a direct computation, it holds
\[
\left(\frac{\displaystyle \rho+\sfrac{c}{\rho}}{\sqrt{1+(\rho')^2}}\right)'=0.
\]
As a consequence, there exists a constant $E>0$, which may depend on $c$, such that 
\begin{equation}
\label{e:integrale_primo}
\rho+\frac{c}{\rho}=E\sqrt{1+(\rho')^2}.
\end{equation}
From now on, we will refer only to the case $c<r^2$. The case $c>r^2$ can be treated in a similar way.
Since $\rho(-h)=\rho(h)$, then there exists $x_0\in (-h,h)$ such that $\rho'(x_0)=0$. Hence, since by Step 1 $\rho>\sqrt c$ when $c<r^2$, we have  
\begin{equation}
    \label{e:stima_E}
    E=\rho(x_0)+\frac{c}{\rho(x_0)}>2\sqrt c.
\end{equation}
In particular, $E^2-4c>0$.  Let
\begin{equation}
    \label{e:exp_v}
    v=\frac{\rho^2+c-\sfrac{E^2}{2}}{\sqrt{E^2-4c}}.
\end{equation}
Substituting \eqref{e:exp_v} into \eqref{e:integrale_primo}, a straightforward computation shows that 
\[
v^2=\frac{E^2}{4}\left(1+(v')^2\right).
\]
As a consequence, either $v\le -\sfrac{E}{2}$ or $v\ge \sfrac{E}{2}$. By Step 1, since $\rho>\sqrt c$ if $c<r^2$, we get that 
\[
v>\frac{2c-\sfrac{E^2}{2}}{\sqrt{E^2-4c}}=-\frac{\sqrt{E^2-4c}}{2}>-\frac{E}{2},
\]
thus $v\ge \sfrac{E}{2}$. Hence, there exists a function $u\in C^2(-h,h)\cap C^0([-h,h])$ such that 
\[
v(x)=\frac{E}{2}\cosh u(x).
\]
In particular, $u$ solves the equation
\[
\cosh^2 \left(u(x)\right)=1+\frac{E^2}{4}\sinh^2 u(x)\left(u'(x)\right)^2,
\]
which reduces to 
\[
\sinh^2 u(x)\left(\frac{E^2}{4}(u'(x))^2-1\right)=0.
\]
Since $u$ is smooth, it must be 
\[
u(x)=\frac{2}{E}x+k
\]
for some $k\in \R$. Then, using \eqref{e:exp_v}, we have
\[
\rho^2(x)=\frac{E}{2}\sqrt{E^2-4c}\cosh\left(\frac{2x}{E}+k\right)+\frac{E^2}{2}-c.
\]
Since $\rho(\pm h)=r$, it follows that $k=0$. Hence, since $\rho$ must be positive, we end up with
\[
\rho(x)=\sqrt{\frac{E}{2}\sqrt{E^2-4c}\cosh\frac{2x}{E}+\frac{E^2}{2}-c}.
\] 
Finally, we notice that \eqref{p-} is a simple consequence of applying the boundary conditions $\rho(\pm h)=r$. Moreover, the other properties are a direct consequence of the explicit expression of $\rho$.
\end{proof}

From now on, let us denote by $\rho_c$ a function of the form \eqref{rho-} or  $\eqref{rho+}$ respectively if $c<r^2$ and $c>r^2$. We now investigate some properties of $\rho_c$. 

\begin{lemma}
\label{lemma:monotonia_inc}
The quantity $\rho_c(0)$ is strictly increasing in $c$. In particular, if $0\le c_1<c_2$ then $\rho_{c_1}<\rho_{c_2}$ on $(-h,h)$.
\end{lemma}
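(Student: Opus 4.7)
The plan is to first establish strict monotonicity of $c \mapsto \rho_c(0)$, and then deduce the pointwise comparison on $(-h, h)$. The monotonicity at the center naturally splits into three regimes matching the trichotomy of \Cref{thm_existence}: the mixed case $c_1 < r^2 < c_2$ is immediate, since the characterization gives $\rho_{c_1}(0) < r < \rho_{c_2}(0)$.

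In the concave regime $r^2 < c_1 < c_2$, a direct ODE comparison works. Assuming $\rho_{c_1}(0) \geq \rho_{c_2}(0)$ for contradiction, the difference $\phi := \rho_{c_1} - \rho_{c_2}$ is even, vanishes at $\pm h$, and is not identically zero (since two Euler-Lagrange equations with different values of $c$ share no common nontrivial solution). Hence $\phi$ attains a strict positive local maximum at some $x^* \in [0, h)$. At $x^*$ the Euler-Lagrange relation yields
\[
\phi''(x^*) = \bigl(1 + (\rho_{c_1}'(x^*))^2\bigr)\bigl[g(a_1, c_1) - g(a_2, c_2)\bigr], \quad g(s, c) := \frac{s^2 - c}{s(s^2 + c)},
\]
with $a_i := \rho_{c_i}(x^*)$ and $a_1 > a_2$. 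A direct computation gives $\partial_c g < 0$, and in the concave regime $a_i^2 < c_i$ implies $\partial_s g > 0$; hence $g(a_1, c_1) > g(a_2, c_2)$, forcing $\phi''(x^*) > 0$ and contradicting the local maximum.

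The convex regime $0 \leq c_1 < c_2 < r^2$ is more delicate, because $\partial_s g$ changes sign. I would instead exploit the explicit representation from \Cref{prop:proprieta_bvp}: separation of variables in the first integral $\rho_c + c/\rho_c = E_c \sqrt{1 + (\rho_c')^2}$, together with the boundary condition, yields the implicit relation
\[
(M_c^4 - c^2)\sinh^2\!\frac{hM_c}{M_c^2 + c} = M_c^2(r^2 - M_c^2), \qquad M_c := \rho_c(0),
\]
and introducing $\tau := (M_c^2 + c)/M_c^2 \in (1, 2)$ recasts it as $M_c^2[1 + \tau(2-\tau)\sinh^2(h/(M_c\tau))] = r^2$. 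Implicit differentiation in $\tau$ produces an expression for $dM_c/d\tau$ whose positivity reduces, via $\tau(2-\tau) \leq 1$, to the inequality $\sinh a (a\cosh a - \sinh a) < 1$ with $a := h/E_c$; this is equivalent to $a\tanh a < 1$, which follows from the hypothesis $h/r \leq \omega$ keeping $a$ below the critical threshold $\xi$ satisfying $\xi\tanh\xi = 1$. Since $c = M_c^2(\tau - 1)$ is also strictly increasing in $\tau$, we conclude $dM_c/dc > 0$. Finally, the pointwise comparison on $(-h, h)$ will follow either by the same maximum-principle argument applied to the first interior crossing (concave case) or by direct inspection of the explicit formula $\rho_c^2(x) = M_c^2 + (M_c^4 - c^2)/M_c^2 \cdot \sinh^2(x/E_c)$ together with the monotonicity of $M_c$ and $E_c$ (convex case). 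The main obstacle is the sign change of $\partial_s g$ in the convex regime, which forces reliance on the explicit form rather than a unified ODE comparison.
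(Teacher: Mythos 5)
Your proposal is correct in substance but follows a genuinely different route from the paper. The paper works entirely at the level of the transcendental equation \eqref{p-}: it observes that its right-hand side is strictly increasing in $c$, infers that each solution branch $E(c)^j$ is increasing in $c$, and then converts this into monotonicity of $\rho_c(0)$ through the relation $\rho_c(0)+c/\rho_c(0)=E(c)$; only the convex case is written out, and the pointwise comparison on $(-h,h)$ is not argued separately. Your three-regime decomposition buys more: the maximum-principle argument in the concave regime is clean and self-contained (your computations of $\partial_c g<0$ and, using $s^2<c$, of $\partial_s g>0$ are correct), and your convex-regime computation in the variables $(M_c,\tau)$ is an honest implicit-function-theorem argument — the identity $M_c^2[1+\tau(2-\tau)\sinh^2(h/(M_c\tau))]=r^2$ checks out, $\partial_\tau F<0$ and $\partial_M F>0$ follow as you say, and the reduction to $a\tanh a<1$ is valid because $E_c>\Pi_0$ (the minimizer lies above $\rho_0$ by \Cref{tilli1}) gives $a<h/\Pi_0<\sinh^{-1}1<\xi$ with $\xi\tanh\xi=1$; note this restricts your argument to the branch above the catenary, whereas the paper's statement nominally tracks all branches $E(c)^j$, though only the minimizer branch is ever used. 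Two small repairs are needed. First, in the concave case the inference ``$\phi\not\equiv 0$ hence $\phi$ has a strict positive maximum'' fails when $\phi(0)=0$; but there $\phi'(0)=0$ by evenness and $\phi''(0)=(1+(\rho'(0))^2)[g(a,c_1)-g(a,c_2)]>0$, so $\phi>0$ near $0$ and your maximum argument then applies. Second, the pointwise comparison in the convex regime is not literally a ``direct inspection'': writing $w=\rho_{c_2}^2-\rho_{c_1}^2=D_2\cosh(2x/E_2)-D_1\cosh(2x/E_1)+{\rm const}$, one sees that the competing monotonicities of $M_c$, $E_c$ and $\tau(2-\tau)$ do not settle the sign termwise; what does work is a one-crossing (Sturm-type) argument: since $E_2>E_1$, the ratio $\sinh(2x/E_2)/\sinh(2x/E_1)$ is decreasing, so $w'$ changes sign at most once on $(0,h)$, and combined with $w(0)>0$, $w(h)=0$ this forces $w>0$ on $[0,h)$. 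With these two patches your plan yields a complete proof, arguably more detailed than the paper's own.
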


\begin{proof}
Let $c<r^2$. Since the function 
\[
[0,+\infty) \ni c \mapsto \frac{2r^2-E^2+2c}{E\sqrt{E^2-4c}}
\]
is strictly increasing, then assigned a value $c >0$, \eqref{p-}, solved with respect to $E$, has solutions
\[
E(c)^1,E(c)^2,\dots,E(c)^{n_c}, \quad n_c\in \mathbb N.
\]
Precisely, $E(c)^j$ are strictly increasing functions in $c$. Moreover, each $E(c)^j$ gives a corresponding $\rho_c(0)^j$ that satisfies
\[
\rho_c(0)^j+\frac{c}{\rho_c(0)^j}=E(c)^j.
\]
  Hence, since the function 
  $$f: \left\{
  \begin{aligned}
      (\sqrt c,+\infty) &\to (2\sqrt c,+\infty)\\
      x&\mapsto x+\frac{c}{x}
  \end{aligned}
  \right.$$ 
  is a strictly increasing bijection and $E(c)^j>2\sqrt c$ for all $j$  by \eqref{e:stima_E}, we get the thesis: $\rho_c(0)^j$ is a strictly increasing function in $c$. 
\end{proof}

We are now in position to prove the main result of this section: both the transcendental equations 
\begin{align}
\label{e:caso_c<r^2}
    \hbox{if } c<r^2 \qquad &\cosh\frac{2h}{E}=\frac{2r^2-E^2+2c}{E\sqrt{E^2-4c}}\\
    \label{e:caso_c>r^2}
   \hbox{if } c>r^2 \qquad &\cosh\frac{2h}{E}=\frac{-2r^2+E^2-4c}{E\sqrt{E^2-4c}}
\end{align}
admit a unique solution $\smileacc{E}(c)>\Pi_0(h,r)$ and $\frownacc{E}(c)>0$ respectively.
Let us start with the convex case.

\begin{lemma}
\label{lemma:unicita_soluzioni_1}
Let $h,r>0$ be such that $\sfrac{h}{r}\in (0,\omega]$ and let $c\in (0,r^2)$. Then, there exists a unique
$$\smileacc{E}_c(h,r)>\Pi_0(h,r)$$
which solves \eqref{e:caso_c<r^2}.
\end{lemma}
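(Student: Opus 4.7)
I define the auxiliary function
\[
\Phi:(2\sqrt c,+\infty)\to\mathbb R,\qquad \Phi(E):=\frac{E\sqrt{E^2-4c}}{2}\cosh\frac{2h}{E}+\frac{E^2}{2}-c,
\]
which coincides with $[\rho_c^E(h)]^2$ for the profile \eqref{rho-} built from $E$, so that the transcendental equation~\eqref{e:caso_c<r^2} reads $\Phi(E)=r^2$. The strategy is to prove (i) $\Phi(\Pi_0)<r^2$, (ii) $\Phi(E)\to+\infty$ as $E\to+\infty$, and (iii) $\Phi$ is strictly increasing on $(\Pi_0,+\infty)$; combined these yield existence and uniqueness of the zero in $(\Pi_0,+\infty)$ via the intermediate value theorem.

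For (i), using the catenary identity $r=\Pi_0\cosh(h/\Pi_0)$ to rewrite $\cosh(2h/\Pi_0)=2r^2/\Pi_0^2-1$ and collecting terms I obtain
\[
2\Pi_0\bigl[\Phi(\Pi_0)-r^2\bigr]=\bigl(\sqrt{\Pi_0^2-4c}-\Pi_0\bigr)\bigl(2r^2-\Pi_0^2\bigr)-2c\Pi_0,
\]
which is strictly negative because $\sqrt{\Pi_0^2-4c}<\Pi_0$, $2r^2>\Pi_0^2$ (since $\Pi_0<r$), and $c,\Pi_0>0$. Part (ii) is immediate: the dominant term of $\Phi(E)$ at $E\to+\infty$ is $E^2$.

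For (iii), a direct computation followed by the substitutions $a=2h/E$ and $b=\sqrt{E^2-4c}$ yields
\[
E b\,\Phi'(E)=E\bigl\{b^2\bigl[\cosh a-(a/2)\sinh a\bigr]+2c\cosh a+Eb\bigr\}.
\]
The only potentially negative summand is $\cosh a-(a/2)\sinh a$, which is non-negative precisely when $a\le a^*$, where $a^*$ is the unique positive root of $2\coth a=a$. When $a\le a^*$ (equivalently $E\ge 2h/a^*$) positivity of $\Phi'$ is immediate. When $a>a^*$, which can occur if $h/\Pi_0$ is close to the critical catenary angle $t_c$ (with $\coth t_c=t_c$), the remaining positive terms $2c\cosh a+Eb$ must compensate the negative contribution; the standing hypothesis $h/r\le\omega$ enters here essentially, since it is equivalent to $h/\Pi_0\le t_c$ and provides, together with $r=\Pi_0\cosh(h/\Pi_0)$, the quantitative control needed for the domination.

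The main obstacle is precisely this quantitative domination in Step~(iii) when $a>a^*$: I expect to carry it out by a refined hyperbolic estimate tied to the defining relation of $\Pi_0$, possibly combined with tracking, via an implicit-function argument in $c$, how the unique branch of solutions bifurcating from $E=\Pi_0$ at $c=0$ evolves and ruling out that any second branch crosses into the region $E>\Pi_0$ for $c\in(0,r^2)$.
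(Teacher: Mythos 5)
Your reduction of \eqref{e:caso_c<r^2} to $\Phi(E)=r^2$ with $\Phi(E)=\tfrac{E}{2}\sqrt{E^2-4c}\cosh\tfrac{2h}{E}+\tfrac{E^2}{2}-c$ is sound, and your step (i) identity
$2\Pi_0\bigl[\Phi(\Pi_0)-r^2\bigr]=\bigl(\sqrt{\Pi_0^2-4c}-\Pi_0\bigr)\bigl(2r^2-\Pi_0^2\bigr)-2c\Pi_0$
checks out — but only when $\Pi_0^2>4c$. Since $\Pi_0<r$ while $c$ ranges over all of $(0,r^2)$, the regime $c\ge \Pi_0^2/4$ is nonempty, and there $\Pi_0$ lies outside the domain $(2\sqrt c,+\infty)$ of $\Phi$, so $\Phi(\Pi_0)$ is undefined. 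This is repairable (one uses $\lim_{E\to 2\sqrt c^+}\Phi(E)=c<r^2$ and notes that every solution automatically exceeds $2\sqrt c>\Pi_0$ in that regime), but you must say so.

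The genuine gap is step (iii). As you yourself compute, $b\,\Phi'(E)=b^2\bigl[\cosh a-\tfrac{a}{2}\sinh a\bigr]+2c\cosh a+Eb$ with $a=2h/E$, and the bracket is negative exactly when $a>a^*$ (root of $2\coth a=a$, $a^*\approx 2.07$). The hypothesis $h/r\le\omega$ gives $h/\Pi_0\le t_c$ with $\coth t_c=t_c$, i.e.\ $2h/\Pi_0\le 2t_c\approx 2.40>a^*$, so for $E$ just above $\Pi_0$ the bad regime genuinely occurs and the positive terms $2c\cosh a+Eb$ must be shown to dominate; in particular, for small $c$ the term $2c\cosh a$ is small, and this is precisely where the equation is known (numerically, see Section 5 of the paper) to have three solutions in total, so the estimate is delicate. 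You state that you ``expect to carry it out'' via a refined hyperbolic estimate or an implicit-function/branch-tracking argument — that is a plan, not a proof, and it is exactly the hard core of the lemma. For comparison, the paper works instead with $\Psi_\gamma(e)=\tfrac{e}{2}\operatorname{arccosh}\tfrac{2\gamma-e^2+2}{e\sqrt{e^2-4\gamma}}$ (so the equation becomes $h/r=\Psi_\gamma(e)$), locates its unique inflection point $e^F_\gamma$ in closed form, and shows there is a threshold $\gamma^*$ below which $e^F_\gamma<e_0$ and above which $\Psi_\gamma$ is globally decreasing — and even then it leans on symbolic and numerical computation for the monotonicity of the auxiliary functions. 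Until you supply the quantitative domination in the regime $a>a^*$, your argument does not establish uniqueness of the solution above $\Pi_0$.
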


\begin{proof}
For simplicity of notation, let us define 
\[
e = \frac{E}{r}, \qquad \gamma = \frac{c}{r^2}.
\]
For $\gamma \in [0,1)$, let 
\begin{equation*}
    \Psi_\gamma: \left\{
    \begin{aligned}
        (2\sqrt\gamma,\gamma+1] &\to \R\\
        e&\mapsto \Psi_\gamma(e):=\frac{e}{2}{\rm arccosh}\frac{2\gamma - e^2 +2}{ e\sqrt{e^2 - 4\gamma}}.
    \end{aligned}
    \right.
\end{equation*}
First of all, we notice that $\Psi_{\gamma_1}<\Psi_{\gamma_2}$ on $(2\sqrt{\gamma_2},\gamma_1+1)$ whenever $\gamma_1<\gamma_2$.
Indeed, it is sufficient to notice that
$$
\frac{\partial}{\partial \gamma} \Psi_\gamma = \frac{(1-\gamma ) e}{\left(e^2-4 \gamma \right)
   \sqrt{(\gamma +1)^2-e^2}} >0.
$$ 
Moreover, considering $\gamma = 0$, the equation $\Psi_0(e)=\sfrac{h}{r}$ has two solutions: we denote by $e_0(h,r)$ the largest one.
To make the notation easier, from now on we will omit the dependence on $h,r$. To get the thesis, we have to show that the equation 
\begin{equation}
    \label{e:tesi}
    \frac{h}{r}=\Psi_\gamma(e) \qquad \hbox{has a unique solution} \quad \smileacc{e}_\gamma>e_0.
\end{equation}
Indeed, if \eqref{e:tesi} holds true, then we have
\[
\smileacc{E}_c:=r\smileacc{e}_{\sfrac{c}{r^2}}>r e_0=\Pi_0.
\]
We divide the rest of the proof into two steps.
\\
\\
    {\it Step 1. Study of $\Psi_\gamma''$.} 
    We claim that for any $\gamma \in (0,1)$, the function $\Psi_\gamma$ has a unique inflection point $e_\gamma^F$.
    Moreover, once $e_\gamma^F$ exists, we also show that there exists $\gamma^\ast \in (0,1)$ such that:
    \begin{itemize}
    \item[(a)] for any $\gamma \in (0,\gamma^\ast)$ it holds $e_\gamma^F<e_0$;
    \item[(b)] for any $\gamma \in [\gamma^\ast,1)$ the function $\Psi_\gamma$ is strictly decreasing.
    \end{itemize}
    First of all, let us start computing the first derivative of $\Psi_\gamma$ with respect to $e$, getting
    $$
\begin{aligned}
   \Psi_\gamma'(e)  &= 
    \frac{2 \gamma  (\gamma +1)-e^2}{\left(e^2-4 \gamma \right) \sqrt{(\gamma
   +1)^2-e^2}}+\frac{1}{2} {\rm arccosh}
   \frac{2 \gamma
   -e^2+2}{e\sqrt{e^2-4 \gamma }}\\
   &=\frac{2 \gamma  (\gamma +1)-e^2}{\left(e^2-4 \gamma \right) \sqrt{(\gamma
   +1)^2-e^2}}+\frac{1}{2} \log
   \left(\frac{2 \gamma
   -e^2+2}{e\sqrt{e^2-4 \gamma }} + \sqrt{\left(\frac{2 \gamma
   -e^2+2}{e\sqrt{e^2-4 \gamma }}\right)^2 -1 }\right).
\end{aligned}
    $$
   Then, the second derivative reads  
    $$
    \Psi_\gamma''(e) = -\frac{8 \gamma ^2 (\gamma +1)^3+(1-3 (\gamma -2) \gamma ) e^4+2 (\gamma -5)
   \gamma  (\gamma +1)^2 e^2}{e \left(e^2-4 \gamma \right)^2
   \left((\gamma +1)^2-e^2\right)^{\sfrac{3}{2}}}.
    $$
    It turns out that the equation $\Psi''_\gamma(e)=0$ has a unique solution $e_\gamma^F\in (2\sqrt\gamma,\gamma+1)$ explicitly given by 
   \[
    e_\gamma^F=\sqrt{\frac{\gamma ^4-3 \gamma ^3-9 \gamma ^2-\sqrt{\gamma ^8+18 \gamma ^7+15 \gamma ^6-36
   \gamma ^5-33 \gamma ^4+18 \gamma ^3+17 \gamma ^2}-5 \gamma }{3 \gamma ^2-6 \gamma -1}}.
    \]
    It is not difficult to see that the function $(0,1)\ni \gamma \mapsto e_\gamma^F$ is an increasing function. 
    
    We now claim that there exists a unique $\gamma^\ast \in (0,1)$ such that $\Psi_{\gamma^\ast}'(e_{\gamma^\ast}^F)=0$. Indeed, let us rewrite $\Psi'_\gamma$ as follows
    $$
    \Psi'_\gamma(e) = g_\gamma(e) + h_\gamma(e),
    $$
    where
    \begin{align*}
         &g_\gamma(e)=\frac{2 \gamma  (\gamma +1)-e^2}{\left(e^2-4 \gamma \right) \sqrt{(\gamma
   +1)^2-e^2}}, &&&h_\gamma(e)=-\frac{1}{2} \log
   \left(\frac{2 \gamma
   -e^2+2}{e\sqrt{e^2-4 \gamma }} + \sqrt{\left(\frac{2 \gamma
   -e^2+2}{e\sqrt{e^2-4 \gamma }}\right)^2 -1 }\right).
    \end{align*}
    A standard computation, performed by symbolic Mathematica software ver. 14.3, shows that the functions $(0,1)\ni \gamma \mapsto g_\gamma(e_\gamma^F)$ and $(0,1)\ni\gamma \mapsto h_\gamma(e_\gamma^F)$ are strictly monotone with 
    \[
    \lim_{\gamma\to 0^+}g_\gamma(e_\gamma^F)<0, \qquad
    \lim_{\gamma\to 0^+}h_\gamma(e_\gamma^F)>0, \qquad
    \lim_{\gamma\to 1^-}g_\gamma(e_\gamma^F)>0, \qquad
    \lim_{\gamma\to 1^-}h_\gamma(e_\gamma^F)<0.
    \]
    This implies that there is a unique $\gamma^\ast \in (0,1)$ with $g_\gamma(e_\gamma^F)=h_\gamma(e_\gamma^F)$ leading to the claim.
    Moreover, by a direct numerical computation, one can prove that $e_{0.0026}^F<e_0$ and $\Psi_{0.0026}(e_{0.0026}^F)<0$. This means that $\gamma^\ast<0.0026$ and  $e_\gamma^F<e_0$ for any $\gamma \in (0,\gamma^\ast)$. Finally, assume that there is $\widetilde \gamma \in (\gamma^\ast,1)$ with $\Psi_{\widetilde \gamma}(e_{\widetilde \gamma}^F)>0$. This immediately contradicts the uniqueness of $\gamma^\ast$. As a consequence, $\Psi_\gamma(e_\gamma^F)<0$ for any $\gamma \in (\gamma^\ast,1)$, implying that $\Psi_\gamma$ is strictly decreasing for any $\gamma \in (\gamma^\ast,1)$.
    \\
    \\
    {\it Step 2. Uniqueness of $\smileacc{e}_\gamma$.}
    We can conclude the proof. Assume by contradiction that for some $\gamma \in (0,1)$ the equation $\sfrac{h}{r}=\Psi_\gamma(e)$ has two solutions $\smileacc{e}_\gamma^{(1)}$ and $\smileacc{e}_\gamma^{(2)}$ with $\smileacc{e}_\gamma^{(1)}\ne \smileacc{e}_\gamma^{(2)}$ and with $\smileacc{e}_\gamma^{(i)}>e_0$ for $i=1,2$. In particular, $e_\gamma^F>e_0$. Then $\gamma\ge \gamma^\ast$ but this leads to a contradiction since $\Psi_\gamma$ is an invertible function. Hence, necessarily $\smileacc{e}_\gamma^{(1)}= \smileacc{e}_\gamma^{(2)}$.
\end{proof}

Then, for the concave case $c>r^2$, we have a stronger result.

\begin{lemma}
\label{lemma:unicita_soluzioni_2}
Let $h,r>0$ be such that $\sfrac{h}{r}\in (0,\omega]$ and let $c>r^2$. Then, there exists a unique 
$$\frownacc{E}_c(h,r)>0$$
solution of \eqref{e:caso_c>r^2}.
\end{lemma}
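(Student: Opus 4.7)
The plan is to mimic the normalization used in the proof of Lemma 3.2 and then, in contrast to that lemma (where $\Psi_\gamma$ was non-monotone), establish the stronger claim that the corresponding function is strictly monotone on the whole admissible interval. I would set $e = E/r$ and $\gamma = c/r^2 > 1$. A direct analysis of the constraint that the argument of $\mathrm{arccosh}$ lies in $[1,+\infty)$ shows the natural domain is $(2\sqrt\gamma,\gamma+1]$, and one checks that
$$
\Phi_\gamma(e) := \frac{e}{2}\,\mathrm{arccosh}\frac{e^2-2(\gamma+1)}{e\sqrt{e^2-4\gamma}}
$$
satisfies $\Phi_\gamma(\gamma+1)=0$ and $\Phi_\gamma(e)\to+\infty$ as $e\to 2\sqrt\gamma^+$. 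Hence the lemma reduces to showing $\Phi_\gamma$ is strictly decreasing on $(2\sqrt\gamma,\gamma+1)$, which by the intermediate value theorem yields a unique $\frownacc{e}_\gamma$ solving $\Phi_\gamma(\frownacc{e}_\gamma)=h/r$; then $\frownacc{E}_c := r\,\frownacc{e}_\gamma$ is the desired unique positive solution.

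To prove monotonicity I would change variables via $t = \sqrt{(\gamma+1)^2 - e^2} \in (0,\gamma-1)$ and set $a = \gamma - 1$, $b = \gamma + 1$, so that $b - a = 2$. Using the identity $(ab - t^2)^2 - (a^2 - t^2)(b^2 - t^2) = 4 t^2$ together with the factorizations $(a+t)(b-t) = ab - t^2 + 2t$ and $(a-t)(b+t) = ab - t^2 - 2t$, a short computation gives
$$
\Phi_\gamma(e(t)) = \frac{\sqrt{b^2 - t^2}}{4}\, L(t), \qquad L(t) := \ln \frac{(a+t)(b-t)}{(a-t)(b+t)}.
$$
Since $e(t)$ is a decreasing bijection, monotonicity of $\Phi_\gamma$ is equivalent to $\Phi'(t) > 0$ on $(0,a)$. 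Differentiating and using the elementary identity $L'(t) = \frac{4(ab+t^2)}{(a^2-t^2)(b^2-t^2)}$, this reduces to the single inequality
$$
\frac{4(ab+t^2)}{a^2 - t^2} > t\, L(t).
$$

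The decisive step is a sharp bound on $L$. The addition formula for $\mathrm{arctanh}$ gives $L(t) = 2\,\mathrm{arctanh}(2t/(ab-t^2))$, and the strict power-series inequality $\mathrm{arctanh}(v) < v/(1-v^2)$ for $v \in (0,1)$, combined with $(ab-t^2)^2 - 4t^2 = (a^2-t^2)(b^2-t^2)$, yields
$$
t\,L(t) < \frac{4 t^2 (ab - t^2)}{(a^2 - t^2)(b^2 - t^2)}.
$$
It therefore suffices to verify the algebraic inequality $(ab+t^2)(b^2-t^2) \geq t^2(ab - t^2)$, which expands to $ab^2 + (b - 2a)t^2 \geq 0$. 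For $\gamma \leq 3$ one has $b \geq 2a$ and both summands are nonnegative. For $\gamma > 3$, the bound $t < a$ together with the factorization $b^2 + ab - 2a^2 = (b-a)(b+2a)$ gives $ab^2 - (2a - b) t^2 > ab^2 - (2a - b) a^2 = a(b-a)(b+2a) > 0$.

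I expect the main obstacle to be identifying the correct sharp bound on $L(t)$: weaker estimates such as $\mathrm{arctanh}(v) \leq v$ produce an algebraic inequality that fails, whereas $v/(1-v^2)$ is precisely the bound that reduces the problem to the clean polynomial identity above. Note that the hypothesis $h/r \in (0,\omega]$ is inherited from the ambient setting but is not actually needed in this lemma: monotonicity of $\Phi_\gamma$ holds on the entire admissible interval, and the existence and uniqueness of $\frownacc{E}_c$ follow for arbitrary $h,r>0$ with $c > r^2$.
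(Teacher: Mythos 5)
Your proposal is correct, and it takes a genuinely different route from the paper. The paper works directly with $\Psi_\gamma(e)=\tfrac{e}{2}\,\mathrm{arccosh}\,\tfrac{e^2-2\gamma-2}{e\sqrt{e^2-4\gamma}}$ on $(2\sqrt\gamma,\gamma+1]$, computes its unique inflection point $e_\gamma^F$ in closed form, splits $\Psi_\gamma'(e_\gamma^F)$ into two explicit pieces $f_1-f_2$, and shows this difference never vanishes by studying limits at $\gamma\to1^+$ and $\gamma\to+\infty$ together with monotonicity checks carried out with symbolic software; strict monotonicity of $\Psi_\gamma$ is then deduced. Your argument reaches the same monotonicity conclusion but is fully elementary and self-contained: the substitution $t=\sqrt{(\gamma+1)^2-e^2}$, $a=\gamma-1$, $b=\gamma+1$ turns $\Psi_\gamma$ into $\tfrac{\sqrt{b^2-t^2}}{4}\,L(t)$ with $L(t)=\ln\tfrac{(a+t)(b-t)}{(a-t)(b+t)}$ (I checked the identity $(ab-t^2)^2-(a^2-t^2)(b^2-t^2)=(b-a)^2t^2=4t^2$ on which this rests), the derivative computation reduces monotonicity to $\tfrac{4(ab+t^2)}{a^2-t^2}>t\,L(t)$, and the sharp bound $\mathrm{arctanh}\,v<\tfrac{v}{1-v^2}$ collapses this to the polynomial inequality $ab^2+(b-2a)t^2\ge 0$, which your two-case analysis ($\gamma\le3$ and $\gamma>3$) settles cleanly. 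What your approach buys is a proof with no reliance on Mathematica and no unexplained monotonicity claims; what the paper's approach buys is uniformity of method with the harder convex case $c<r^2$ (\Cref{lemma:unicita_soluzioni_1}), where the analogous function genuinely fails to be monotone and the inflection-point bookkeeping is unavoidable. Your closing observations are also accurate: the endpoint values $\Psi_\gamma(\gamma+1)=0$ and $\Psi_\gamma(e)\to+\infty$ as $e\to2\sqrt\gamma^+$ give existence for every $h,r>0$, so the hypothesis $\sfrac{h}{r}\in(0,\omega]$ is indeed not used in this lemma (it is only needed elsewhere, to have the catenary $\rho_0$ available). One cosmetic remark: your normalized equation carries the (correct) numerator $E^2-2c-2r^2$ coming from \eqref{p+}, whereas \eqref{e:caso_c>r^2} as printed has $-4c$; this is a typo in the paper, not a discrepancy in your argument.
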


\begin{proof}
As in the previous proof, let 
\[
e = \frac{E}{r}, \qquad \gamma = \frac{c}{r^2}.
\]
For $\gamma >1$, let 
\begin{equation*}
    \Psi_\gamma:\left\{
    \begin{aligned}
        (2\sqrt\gamma,\gamma+1] &\to \R\\
        e&\mapsto \Psi_\gamma(e) :=  \frac{e}{2}{\rm arccosh}\frac{-2\gamma + e^2 -2}{ e\sqrt{e^2 - 4\gamma}}.
    \end{aligned}\right. 
\end{equation*}
Again, we have to show that the equation $\sfrac{h}{r}=\Psi_\gamma(e)$ has a unique solution.
Differently from the previous case, here, we are able to prove directly that $\Psi_\gamma$ is a strictly decreasing function.
We remark that all explicit computations are performed by symbolic Mathematica software ver. 14.3. 
Mimicking Step 1 of \Cref{lemma:unicita_soluzioni_1}, we show that $\Psi_\gamma$ has a unique inflection point $e_\gamma^F$ given by 
\[
e_\gamma^F = \sqrt{\frac{\gamma ^4-3 \gamma ^3-9 \gamma ^2+\sqrt{\gamma ^8+18 \gamma ^7+15 \gamma ^6-36
   \gamma ^5-33 \gamma ^4+18 \gamma ^3+17 \gamma ^2}-5 \gamma }{3 \gamma ^2-6 \gamma -1}}.
\]
Moreover, we can rewrite
$$
\Psi_\gamma'(e_\gamma^F)=f_1(\gamma)-f_2(\gamma)
$$
where 
\[
\begin{aligned}
f_1(\gamma)&:=-\frac{1}{2}{\rm arccosh}(g_1(\gamma)),\\
g_1(\gamma)&:=\frac{\gamma ^4-9 \gamma ^3-3 \gamma ^2+\sqrt{(\gamma -1)^2 \gamma ^2 (\gamma +1)^3 (\gamma +17)}+9 \gamma +2}{\left(3 \gamma ^2-6
   \gamma -1\right) \sqrt{\frac{\gamma ^4-15 \gamma ^3+15 \gamma ^2+\sqrt{(\gamma -1)^2 \gamma ^2 (\gamma +1)^3 (\gamma +17)}-\gamma
   }{3 \gamma ^2-6 \gamma -1}} \sqrt{\frac{\gamma ^4-3 \gamma ^3-9 \gamma ^2+\sqrt{(\gamma -1)^2 \gamma ^2 (\gamma +1)^3 (\gamma
   +17)}-5 \gamma }{3 \gamma ^2-6 \gamma -1}}},\\
f_2(\gamma)&:= \frac{-5 \gamma ^4+3 \gamma ^3+5 \gamma ^2+\sqrt{(\gamma -1)^2 \gamma ^2 (\gamma +1)^3 (\gamma +17)}-3 \gamma }{\sqrt{\frac{-2
   \gamma ^4-3 \gamma ^3+\gamma ^2+\sqrt{(\gamma -1)^2 \gamma ^2 (\gamma +1)^3 (\gamma +17)}+3 \gamma +1}{-3 \gamma ^2+6 \gamma +1}}
   \left(\gamma ^4-15 \gamma ^3+15 \gamma ^2+\sqrt{(\gamma -1)^2 \gamma ^2 (\gamma +1)^3 (\gamma +17)}-\gamma \right)}.
\end{aligned}
\]
It turns out that 
\begin{equation}
\label{e:limite_a1}
    \lim_{\gamma\to 1^+}f_1(\gamma)=\frac{1}{4}(\log 2-2\log(1+\sqrt 3))<0, \qquad \lim_{\gamma\to 1^+}f_2(\gamma)=-\infty,
\end{equation}
and
\begin{equation*}
    \lim_{\gamma\to +\infty}f_1(\gamma)=\lim_{\gamma\to +\infty}f_2(\gamma)=0.
\end{equation*}
Thus, by \eqref{e:limite_a1}, we have that $f_1-f_2>0$ on $(1,1+\delta)$ for some $\delta>0$. 
Moreover, by a change of variable $p\mapsto \sfrac{1}{\gamma}$, we prove that there are no $\gamma \in (1,+\infty)$ such that $f_1'(\gamma)-f_2'(\gamma)=0$.
This implies that $f_1-f_2$ is a strictly decreasing function and tends to $0$ as $\gamma\to+\infty$, hence $f_1-f_2>0$ everywhere. As a consequence, $\Psi_\gamma'(e_\gamma^F)\ne 0$ for any $\gamma \in (1,+\infty)$ implying that $\Psi_\gamma$ must be strictly decreasing on $(1,+\infty)$.
\end{proof}

The following corollary is immediate.

\begin{corollary}\label{cor:bvp}
The boundary value problem \eqref{bvp} has a unique solution $\rho_c \in  C^2([-h,h])$ lying above the catenary $\rho_0$.
\end{corollary}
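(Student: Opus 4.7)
The plan is to synthesize the existence result established in Section \ref{sec:esistenza_minimo_rhoc} with the classification and uniqueness results of the present section; the corollary should reduce to a short bookkeeping.

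\textbf{Existence.} A $C^2([-h,h])$ solution of \eqref{bvp} lying above $\rho_0$ has already been produced. Indeed, \Cref{prop:esistenza_minimo_Ec} together with \Cref{rem:integrale_primo} yields a $C^2$ minimizer of $\mathcal{F}_c$ satisfying \eqref{bvp}. For $c<r^2$, the minimizer satisfies $\rho_c\geq\rho_0$ by \Cref{tilli1}; for $c>r^2$, the strict concavity from \Cref{prop:proprieta_bvp} forces $\rho_c>r$ on $(-h,h)$, whereas strict convexity of the catenary with the same boundary values gives $\rho_0<r$ on the same interval, so $\rho_c>\rho_0$ is automatic; finally the borderline case $c=r^2$ yields $\rho_c=r\geq\rho_0$.

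\textbf{Uniqueness.} By \Cref{prop:proprieta_bvp}, every strictly positive $C^2$ solution of \eqref{bvp} is of the explicit form \eqref{rho-} or \eqref{rho+}, uniquely parametrized by a constant $E>2\sqrt c$ solving the transcendental equation \eqref{p-} or \eqref{p+}. The correspondence solution $\leftrightarrow E$ is bijective: evaluating the first integral \eqref{e:integrale_primo} at $x=0$, where $\rho'(0)=0$ by the evenness stated in \Cref{prop:proprieta_bvp}, yields $E=\rho(0)+c/\rho(0)$. For $c>r^2$, \Cref{lemma:unicita_soluzioni_2} provides a unique admissible $E$, and the associated solution lies above $\rho_0$ as in Step 1. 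For $c<r^2$, given any BVP solution $\rho\geq\rho_0$, one has $\rho(0)\geq\rho_0(0)=\Pi_0$ and $\rho(0)>\sqrt c$ by \Cref{prop:proprieta_bvp}. Using that $x\mapsto x+c/x$ is strictly increasing on $(\sqrt c,+\infty)$ with infimum value $2\sqrt c$, a short case analysis distinguishing $\Pi_0>\sqrt c$ from $\Pi_0\leq\sqrt c$ gives in all cases $E=\rho(0)+c/\rho(0)>\Pi_0$. \Cref{lemma:unicita_soluzioni_1} then forces $E=\smileacc{E}(c)$, and hence uniqueness of the solution.

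\textbf{Main obstacle.} The only delicate step is the convex case $c<r^2$, where one must translate the geometric condition ``$\rho\geq\rho_0$'' into the arithmetic condition $E>\Pi_0$ on the parameter. The case analysis above handles this, with the subcase $\Pi_0\leq\sqrt c$ resolved via the chain $E>2\sqrt c\geq 2\Pi_0>\Pi_0$, and the subcase $\Pi_0>\sqrt c$ via $E\geq\Pi_0+c/\Pi_0>\Pi_0$; the strict positivity of $c$ is crucial in both.
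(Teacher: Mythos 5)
Your proof is correct and follows exactly the route the paper intends (the paper simply declares the corollary ``immediate'' after \Cref{prop:proprieta_bvp}, \Cref{lemma:unicita_soluzioni_1} and \Cref{lemma:unicita_soluzioni_2}, giving no details). The one genuinely non-obvious step — translating ``$\rho\ge\rho_0$'' into ``$E>\Pi_0$'' via $E=\rho(0)+c/\rho(0)$ and the monotonicity of $x\mapsto x+c/x$ on $(\sqrt c,+\infty)$ together with the bound $E>2\sqrt c$ from \eqref{e:stima_E} — is handled correctly in your case analysis, which the paper leaves implicit.
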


\section{Shape of \texorpdfstring{$\rho_c$}{rho} as \texorpdfstring{$c \to \infty$}{c}}
\label{sec:c_tinfty}

It remains to show that $\rho_c \to \rho_\infty$ uniformly on $[-h,h]$ as $c \to + \infty$ where $\rho_\infty$ is the unique minimizer of $\mathcal{F}_\infty$  defined in \Cref{lemma:esiste_minimo_Finfty}.

\begin{lemma}{\cite[Lemma 3.2]{greco2012}}\label{lemma_conv}
Let $a,b\in \R$ with $a<b$ and let $(\rho_j)$ be a sequence of convex functions on $(a,b)$ such that $\rho_j\to \rho$ uniformly on every $[\gamma,\delta]\subset (a,b)$. Then $\rho_j' \to \rho'$ in $L^1(\gamma,\delta)$ for any $[\gamma,\delta]\subset (a,b)$.
\end{lemma}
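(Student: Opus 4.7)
The plan is to combine the classical pointwise a.e.\ convergence of derivatives of convex functions with a uniform Lipschitz bound, and then conclude via the dominated convergence theorem. As a preliminary remark, $\rho$ is convex on $(a,b)$ as a pointwise limit of convex functions, and hence differentiable except on an at most countable subset of $(a,b)$; in particular $\rho'$ is well-defined a.e.\ on $(a,b)$.

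First, I would produce a uniform Lipschitz bound on $[\gamma,\delta]$. Fix $\gamma',\delta'$ with $a<\gamma'<\gamma<\delta<\delta'<b$. Uniform convergence on $[\gamma',\delta']$ gives $\|\rho_j\|_{\infty,[\gamma',\delta']}\le M$ for some $M$ independent of $j$. Convexity then forces each $\rho_j$ to be Lipschitz on $[\gamma,\delta]$ with a constant controlled by $2M/\min(\gamma-\gamma',\delta'-\delta)$, since any difference quotient of $\rho_j$ across points in $[\gamma,\delta]$ is sandwiched between the secant slopes from $\gamma'$ and to $\delta'$. Thus $|\rho_j'|\le L$ a.e.\ on $[\gamma,\delta]$ for some $L$ independent of $j$.

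Next, I would establish pointwise a.e.\ convergence $\rho_j'(x)\to\rho'(x)$ at every $x\in(\gamma,\delta)$ where $\rho$ is differentiable. The standard convexity sandwich
\[
\frac{\rho_j(x)-\rho_j(x-h)}{h}\le \rho_j'(x)\le\frac{\rho_j(x+h)-\rho_j(x)}{h},
\]
valid whenever $\rho_j'(x)$ exists and for sufficiently small $h>0$, passes to the limit $j\to\infty$ by uniform convergence, yielding
\[
\frac{\rho(x)-\rho(x-h)}{h}\le\liminf_{j}\rho_j'(x)\le\limsup_{j}\rho_j'(x)\le\frac{\rho(x+h)-\rho(x)}{h}.
\]
Letting $h\to 0^+$ and invoking differentiability of $\rho$ at $x$ squeezes both outer quantities to $\rho'(x)$, proving the claim.

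Finally, the dominated convergence theorem on $[\gamma,\delta]$, with dominant the constant $L$, gives $\rho_j'\to\rho'$ in $L^1(\gamma,\delta)$. No step is a serious obstacle; the main care is to work on a slightly larger interval $[\gamma',\delta']$ to guarantee the uniform Lipschitz bound on $[\gamma,\delta]$, and to restrict the pointwise argument to the full-measure set where $\rho$ (and each $\rho_j$) is differentiable.
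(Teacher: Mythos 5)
Your proof is correct. Note that the paper offers no proof of this lemma at all: it is imported verbatim from the cited reference (Greco, Lemma 3.2), so there is no internal argument to compare against. Your route — a uniform Lipschitz bound on $[\gamma,\delta]$ obtained by sandwiching difference quotients between secant slopes on a slightly larger interval, pointwise a.e.\ convergence $\rho_j'\to\rho'$ at differentiability points of the (convex) limit via the convexity sandwich and uniform convergence, and then dominated convergence — is the standard proof of this classical fact, and the one small technical point (that for a.e.\ $x$ \emph{all} the $\rho_j$ and $\rho$ are simultaneously differentiable, since the exceptional sets form a countable union of countable sets) is handled by your restriction to the full-measure set of common differentiability.
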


\begin{proposition}
\label{prop:gamma_convergence}
    Let $(c_j)$ be a positive sequence with $c_j \to +\infty$. Let $\rho_j$ be the minimizer of $\mathcal{F}_{c_j}$. Then, $\rho_j \to \rho_\infty$ uniformly on $[-h,h]$ as $j \to +\infty$ .
\end{proposition}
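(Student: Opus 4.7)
The plan is to exploit the monotonicity in $c$ together with a $\Gamma$-convergence--type bound, consistently with the heuristic $\mathcal{F}_c/c\to \mathcal{F}_\infty$.

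First I would extract a pointwise limit. For $j$ large, $c_j>r^2$, so by \Cref{prop:proprieta_bvp} each $\rho_j$ is strictly concave with $\rho_j\ge r$, and the upper barrier $\rho_j\le \rho_\infty$ is delivered by \Cref{tilli2} applied to the minimizer $\rho_j$ in place of $\rho$. By \Cref{lemma:monotonia_inc}, $c\mapsto \rho_c(x)$ is monotone increasing; being bounded above by $\rho_\infty(x)$, for every $x\in(-h,h)$ the limit
$$
\rho^*(x):=\lim_{j\to\infty}\rho_j(x)=\sup_{c>r^2}\rho_c(x)
$$
exists along the entire sequence and satisfies $r\le \rho^*\le \rho_\infty$. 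Since the $\rho_j$'s are concave, $\rho^*$ is concave and the convergence is uniform on every $[a,b]\cc (-h,h)$, by the classical fact on pointwise convergence of concave functions. Applying \Cref{lemma_conv} to $-\rho_j$ gives $\rho_j'\to (\rho^*)'$ in $L^1_{\rm loc}(-h,h)$, so up to a subsequence also $\rho_j'\to (\rho^*)'$ almost everywhere.

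The core step is the $\Gamma$-convergence--type inequality. Concavity of $\rho_j$ together with $\rho_j(\pm h)=r$ and $\rho_j\le \sqrt{h^2+r^2}=\max\rho_\infty$ gives the uniform length bound
$$
\bigintsss_{-h}^h \sqrt{1+(\rho_j')^2}\,dx\le 2h+\bigintsss_{-h}^h|\rho_j'|\,dx= 2h+2\bigl(\max_{[-h,h]}\rho_j-r\bigr)\le 2h+2\bigl(\sqrt{h^2+r^2}-r\bigr)=:L.
$$
Since also $\rho_j\le \sqrt{h^2+r^2}$, one deduces
$$
\frac{\mathcal{F}_{c_j}(\rho_j)}{c_j}=\bigintsss_{-h}^h \frac{\rho_j}{c_j}\sqrt{1+(\rho_j')^2}\,dx+\mathcal{F}_\infty(\rho_j)=\mathcal{F}_\infty(\rho_j)+o(1).
$$
Testing minimality of $\rho_j$ against the competitor $\rho_\infty\in X$ and dividing by $c_j$ produces the upper bound $\mathcal{F}_\infty(\rho_j)+o(1)\le \mathcal{F}_{c_j}(\rho_\infty)/c_j\to \mathcal{F}_\infty(\rho_\infty)$. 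The matching lower bound $\mathcal{F}_\infty(\rho^*)\le \liminf_j \mathcal{F}_\infty(\rho_j)$ follows from Fatou's lemma applied to the nonnegative integrand $\sqrt{1+(\rho_j')^2}/\rho_j$, using the a.e.\ convergence of $\rho_j$ and of a subsequence of $\rho_j'$, combined with a standard subsequence argument to propagate the bound to the full sequence. Combining, $\mathcal{F}_\infty(\rho^*)\le \mathcal{F}_\infty(\rho_\infty)$. The squeeze $r\le \rho^*\le \rho_\infty$ together with $\rho_\infty(\pm h)=r$ forces $\rho^*(\pm h)=r$, hence $\rho^*\in X$; the uniqueness of the minimizer of $\mathcal{F}_\infty$ established in \Cref{lemma:esiste_minimo_Finfty} then yields $\rho^*=\rho_\infty$.

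Finally, I would upgrade locally uniform convergence to uniform convergence on $[-h,h]$ by a squeeze at the endpoints: given $\varepsilon>0$, choose $\delta>0$ with $\rho_\infty(x)<r+\varepsilon$ for $|x|>h-\delta$; on $[-h,-h+\delta]\cup[h-\delta,h]$ the chain $r\le \rho_j\le \rho_\infty<r+\varepsilon$ (and analogously for $\rho_\infty$) yields $|\rho_j-\rho_\infty|<\varepsilon$, while on $[-h+\delta,h-\delta]$ uniform convergence is already in hand. The step I expect to be the main technical obstacle is precisely the uniform length estimate $L_j\le L$: it is what makes the perturbation $\int \rho_j c_j^{-1}\sqrt{1+(\rho_j')^2}\,dx$ vanish in the limit, and it relies crucially on the \emph{a priori} upper barrier $\rho_j\le \rho_\infty$ secured by \Cref{tilli2}; without such two-sided control of the minimizers the $\Gamma$-convergence scheme would break down.
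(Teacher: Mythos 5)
Your proof is correct and lands on the same skeleton as the paper (a $\Gamma$-convergence-type comparison plus uniqueness of the minimizer of $\F_\infty$), but by a genuinely different route. The paper does not use monotonicity in $c$: it notes that each $\rho_j$ minimizes $\F_{c_j}$ on $Y=\{\rho\in W^{1,1}:\rho_0\le\rho\le\rho_\infty,\ \rho(\pm h)=r\}$, extracts a uniformly convergent subsequence by compactness, gets $\rho_j'\to\rho'$ in $L^1$ from \Cref{lemma_conv}, and then uses a \emph{continuity} property of $\F_\infty$ along strongly $W^{1,1}$-convergent sequences (its Step 1) where you use Fatou/lower semicontinuity; the minimality comparison is made against an arbitrary $\bar\rho\in X$, and whole-sequence convergence comes from uniqueness of the limit. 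You instead obtain convergence of the entire sequence at once from \Cref{lemma:monotonia_inc} (monotone in $c$, bounded above by $\rho_\infty$ via \Cref{tilli2} together with uniqueness of $\rho_j$), need only lower semicontinuity (Fatou along a further subsequence realizing the liminf, then propagated), test minimality against the single competitor $\rho_\infty$, and handle uniform convergence up to $\pm h$ by an explicit endpoint squeeze, a point the paper treats more implicitly through the compactness of $Y$; your approach buys whole-sequence convergence without a compactness argument, while the paper's continuity lemma avoids the a.e.-convergence bookkeeping. Two small remarks, neither a genuine gap: (i) your uniform length bound is true but not actually the crux — the upper-bound chain only needs the trivial inequality $\F_\infty(\rho_j)\le \F_{c_j}(\rho_j)/c_j$, since the discarded term $c_j^{-1}\int_{-h}^h\rho_j\sqrt{1+(\rho_j')^2}\,dx$ is nonnegative; the two-sided barriers matter for the monotone-limit/compactness part, not for this step; (ii) before invoking the uniqueness statement of \Cref{lemma:esiste_minimo_Finfty} you should record why $\rho^*\in X$, i.e.\ $\rho^*\in W^{1,1}(-h,h)$ with traces equal to $r$: this follows because $\rho^*$ is concave and bounded (its derivative is monotone with total variation $2(\max\rho^*-r)$), combined with your squeeze at the endpoints.
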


\begin{proof}
We divide the proof in two steps.
\\
\\
{\sl Step 1.} Let $\bar\rho_j$ be a sequence in $X$ converging to $\bar\rho\in X$ strongly in $W^{1,1}(-h,h)$. In particular, $\bar\rho_j\to \bar\rho$ uniformly on $[-h,h]$ and $\bar\rho_j'\to \bar\rho'$ in $L^1(-h,h)$. We claim that 
\begin{equation}\label{gamma}
\lim_{j\to+\infty}\F_\infty(\bar\rho_j)=\F_\infty(\bar\rho).
\end{equation}
Since $\bar\rho_j\to \bar\rho$ in $L^\infty(-h,h)$, it is sufficient to show that 
\[
\sqrt{1+(\bar\rho_j')^2} \to \sqrt{1+(\bar\rho')^2} \quad \text{in $L^1(-h,h)$}.
\]
Using the subadditivity of the square root, we have
\[
\left|\sqrt{1+(\bar\rho_j')^2}- \sqrt{1+(\bar\rho')^2}\right|\le \sqrt{\left|(\bar\rho_j')^2-(\bar\rho')^2\right|}. 
\]
As a consequence, by H\"older inequality, we have
\[
\begin{aligned}
\bigintsss_{-h}^h\left|\sqrt{1+(\rho_j')^2}- \sqrt{1+(\rho')^2}\right|\,dx
&\le\bigintsss_{-h}^h \sqrt{\left|(\rho_j')^2-(\rho')^2\right|}\,dx\\
&=\bigintsss_{-h}^h \sqrt{\left|\rho_j'+\rho'\right|}\sqrt{\left|\rho_j'-\rho'\right|}\,dx\\
&\le \left(\bigintsss_{-h}^h\left|\rho_j'+\rho'\right|\,dx\right)^{\sfrac{1}{2}}\left(\bigintsss_{-h}^h\left|\rho_j'-\rho'\right|\,dx\right)^{\sfrac{1}{2}}\\
&\le C \left(\bigintsss_{-h}^h\left|\rho_j'-\rho'\right|\,dx\right)^{\sfrac{1}{2}}\to 0
\end{aligned} 
\]
which proves \eqref{gamma}. 
\\
\\
{\sl Step 2.} Notice that $\rho_j$ is a minimizer of $\F_{c_j}$ on the set 
\[
Y:= \left\{\rho \in W^{1,1}(-h,h),  \,
\rho_0 \leq \rho\leq \rho_\infty,\, \rho(\pm h)=r\right\}.
\]
By Ascoli Arzela's Theorem, $Y$ is compact with respect to the uniform convergence on $[-h,h]$. Hence $\rho_j$ converges, up to a subsequence (not relabeled), to $\rho \in Y$ uniformly on $[-h,h]$. Since $c_j\to+\infty$, without loss of generality we can assume $\rho_j$ concave. Applying \Cref{lemma_conv} (for concave functions), we get also $\rho_j'\to \rho'$ in $L^1(-h,h)$. For any $\bar\rho\in X$ we have 
\[
\F_\infty(\rho)\stackrel{\eqref{gamma}}{=}\lim_{j\to+\infty}\F_\infty(\rho_j)\le \liminf_{j\to+\infty}\frac{1}{c_j}\mathcal{F}_{c_j}(\rho_j)\le \liminf_{j\to+\infty}\frac{1}{c_j}\mathcal{F}_{c_j} (\bar\rho)\stackrel{\eqref{gamma}}{=}\F_\infty(\bar\rho).
\]
This means that $\rho$ is a minimizer of $\F_\infty$, and thus by \Cref{lemma:esiste_minimo_Finfty}, $\rho=\rho_\infty$. 
\end{proof}

\section{Numerical tests}
\label{sec:numerica}
Finally, in this section, we run some numerical simulations to visualize our theoretical results and to give some additional geometrical insights 
to the unique minimizers $\rho_c$.
In the regime where \eqref{cond_catenaria} holds true, namely for closed coaxial rings, for all $c >0$, we solve the Euler-Lagrange equation 
\begin{equation}\label{bvp_num}
\left\{
\begin{aligned}
    &\rho''=\frac{(\rho^2-c)(1+(\rho')^2)}{\rho(\rho^2+c)}\\
&\rho(-h)=r\\
&\rho(h)=r
\end{aligned}
\right.
\end{equation}
varying the geometrical ratio $\sfrac{h}{r}$.

The numerical simulations confirm the theoretical results obtained in \Cref{thm_existence}.
In \Cref{fig:numerica}, we plot the minimizers $\rho_c$ for different values of the parameter $c>0$: the solutions are even, regular and convex for $c<r^2$, concave otherwise and flat if $c=r^2$. Moreover, for $c>0$, all minimizers $\rho_c$ lie above the catenary and they tend to be an arc of circumference as $c$ goes to infinity, see in \Cref{fig:numerica} the blue line and the black one, respectively.
Moreover, we also notice that solutions are increasing as the parameter $c$ increases: for all ratios $\sfrac{h}{r}$, $\rho_c(0)$ is a monotone function with respect to $c$.

\begin{figure}[htbp]
	\begin{subfigure}{\linewidth}
		\centering
	\includegraphics[width=0.5\textwidth]{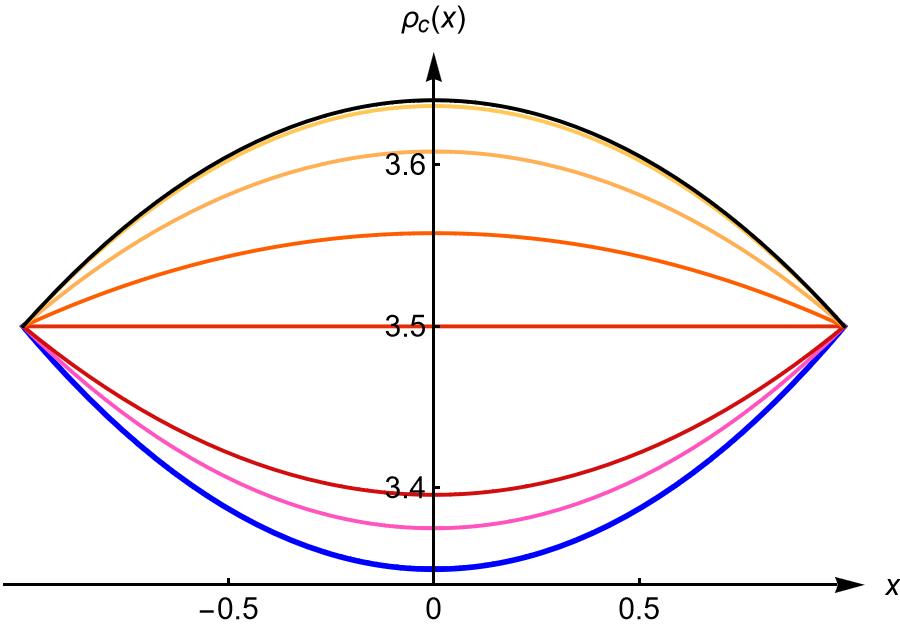}
		\caption{$\rho_c(1) = \rho_c(-1) = \sfrac{7}{2}$}
	\end{subfigure}
	\caption{Numerical representation of minimizers of $\mathcal{F}_c$: we choose $h=1$, as boundary conditions $\rho_c(1) = \rho_C(-1) = \sfrac{7}{2}$ and from the bottom to the top $c = 0, 1,2,12.25(=r^2),30,100,1000$. The thicker blue line is the catenary for $c = 0$. The black line is the truncated circumference when $c\to\infty$, with equation $x^2+y^2=13.25=\sfrac{1}{E_\infty^2}$.}
\label{fig:numerica}
\end{figure}

In \Cref{fig:3d}, we show a portion of the revolution surface for different values of the parameter $c$. In the interior we have the catenoid in yellow, then the surfaces are ordered as $c$ increases: they are convex for $c<r^2$, the solution is the cylinder for $c=r^2$, it has a concave profile for $c>r^2$ and finally the region of sphere generated by $\rho_\infty$ in blue.

\begin{figure}[htbp]
    \centering
    \includegraphics[width=0.5\textwidth]{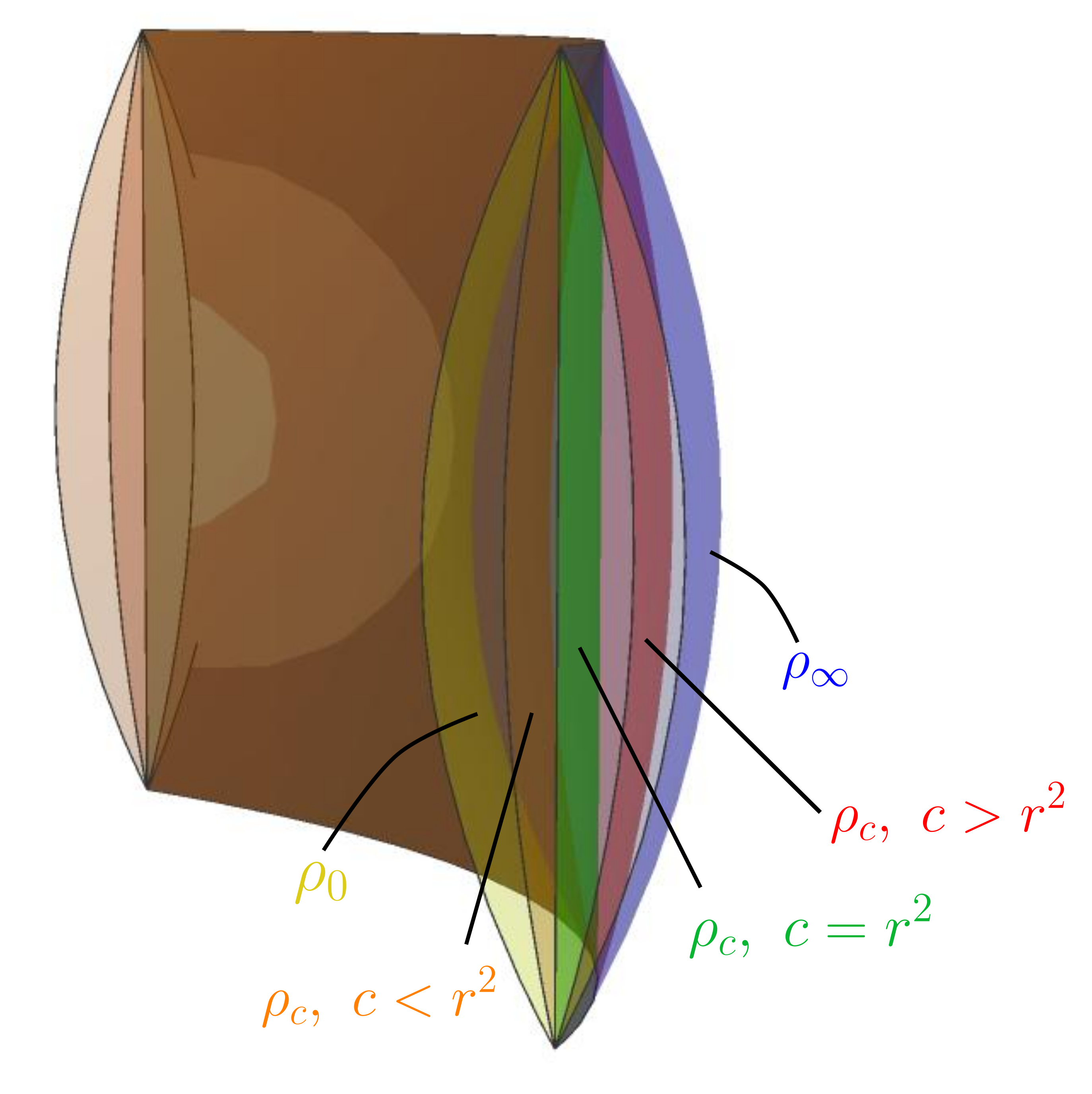}
    %\linewidth
    \caption{A slice of revolution surfaces obtained from profiles that are minimizers of the energy functional $\mathcal{F}_c$. We choose $h=1$ and boundary conditions $\rho_c(1) = \rho_c(-1) = \sfrac{7}{2}$. From the yellow surface to the blue one, we have the catenoid (yellow) for $c = 0$, the convex surface (orange) for $c = 5$, the cylinder (green) for $ c= 12.25 = r^2$, the concave profile (red) for $c = 50$  and finally the region of a sphere generated by $\rho_\infty$ in blue.}
    \label{fig:3d}
\end{figure}

In \Cref{cor:bvp}, we prove that the boundary value problem \eqref{bvp_num} admits a unique solution in all the regimes. Precisely, if $c <r^2$, and if \eqref{cond_catenaria} holds true, \Cref{prop:esistenza_minimo_Ec}, \Cref{prop:proprieta_bvp} and \Cref{lemma:unicita_soluzioni_1} ensure that there exists a unique minimizer solution of the Euler-Lagrange equation lying above the stable catenary $\rho_0$.
Unfortunately, once $c$ and the geometric parameters $h$ and $r$ are fixed,  we are not able to determine how many solutions the transcendental equation \eqref{p-} has, implying that we do not know the exact number of critical points for the energy functional $\mathcal{F}_c$ in the convex regime.
Hence, we decide to perform a numerical study. Precisely, we rewrite \eqref{p-} as
\begin{equation}
\label{trascr}
h =\frac{E}{2}\operatorname{arccosh}\frac{2r^2-E^2+2c}{E\sqrt{E^2-4c}}=:H(E).
\end{equation}
We carry out a qualitative graphical analysis by studying the intersection points between horizontal lines $y=h$ and the graphs of the function $y=H(E)$ for different values of $c$. 
Precisely, in \Cref{fig:multiple}, fixing different values of $c$ from $0$ to $0.05$, we numerically solve \eqref{trascr} determining the exact number of solutions of \eqref{bvp_num} with boundary conditions $\rho(\pm h) = r$.

Numerical simulations in the case $c< r^2$ (the convex regime) suggest that for a fixed value of $h$ and $c$, there can be one, two or three solutions $E_i$ with $i = 1,2,3$ for \eqref{trascr}, 
see for instance the intersection of the dashed horizontal line $h = 0.4$ with the red curve in \Cref{fig:multiple} corresponding to $c = 0.002$ for the case of three solutions, the intersection of the horizontal line $h = \omega$ with the pink curve in \Cref{fig:multiple} corresponding to $c = 0.0005$ for the case of two solutions or the dashed horizontal line $h = 0.4$ with the top yellow curve in \Cref{fig:multiple} corresponding to $c = 0.05$ for the case of one solution.

\begin{figure}[htbp]
   \centering
   \includegraphics[width=0.5\textwidth]{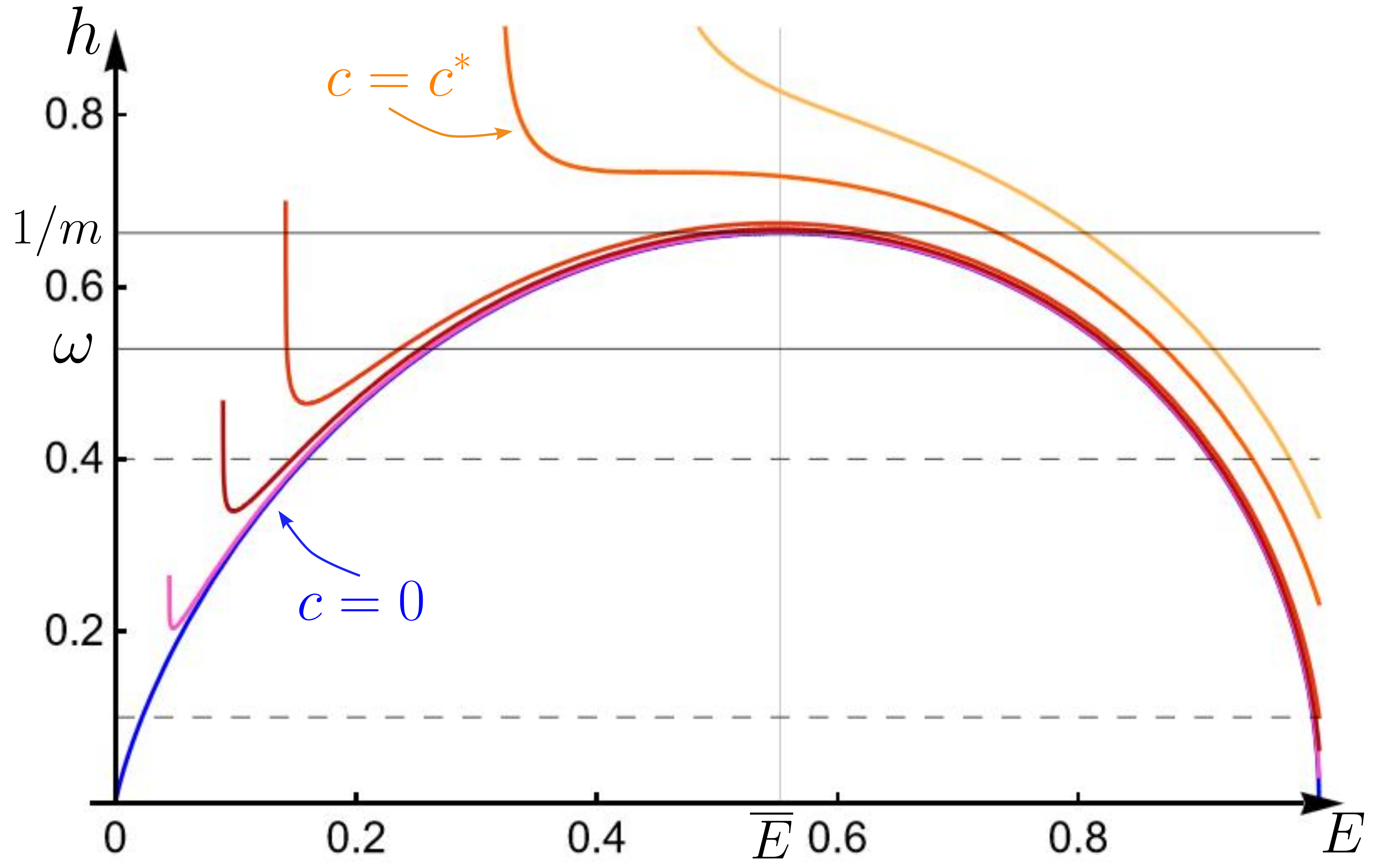}
   \caption{Qualitative study of the number of solutions of \eqref{trascr} for different values of $c$.
    From the bottom to the top, (fixing $r=1$) we have $h=H(E)=0.1, 0.4, \omega\sim 0.528, \sfrac{1}{m}\sim 0.6627$ and $c=0,0.0005,0.002,
 0.005, c^*\sim 0.0257, 0.05$. 
 $\overline{E}$ is the abscissa of the maximum of $H(E)$ for $c=0$. An horizontal line determines two intersections with $H(E)$ in the case $c=0$, three intersections when $c\in(0,c^*)$. If $c\geq c^*$, there is a unique intersection corresponding to the minimizer of the energy functional $\mathcal{F}_c$.}
   \label{fig:multiple}
\end{figure}
From \Cref{fig:multiple}, in the case $c = 0$ and in the regime $0< h\leq \sfrac{1}{m}$, there are always two intersections: they correspond to the two catenaries solutions of the Plateau problem spanning two coaxial rings, one is the stable catenary and the other one is the unstable one.
In particular, the stable catenary is the unique minimizer when $\sfrac{h}{r}<\omega$,
for details we refer to \cite[Remark 2.4]{bllm}. 
Moreover, we also notice that $H'(\overline{E}, c=0)=:H'_0(\overline{E})= 0$ in $\overline{E} =0.552434$ and this maximum point is such that $H_0(\overline{E}) = \sfrac{1}{m}$, i.e the last value  of $\sfrac{h}{r}$ at which catenaries are solutions.
Next, for $c$ small enough and for all $h\leq \omega$, we have three solutions: two intersections are such that $E_i < \overline{E}$ with $i = 1,2$, while the third one has abscissa bigger than $\overline{E}$ and precisely such an intersection 
corresponds to the unique minimizer of $\mathcal{F}_c$ since it lies above the catenary. As expected and theoretically proved, we also notice that the abscissa of this third intersection increases with $c$. Thus, the two intersections that determine $E_i$ with $i=1,2$ correspond to unstable solutions that lie  below the catenary, and thus they cannot be minimizers. 

We finally spot that there are multiple solutions to $\eqref{trascr}$ for $0<c<c^*$ with $c^*\sim 0.0257$. Indeed, repeating the same argument carried out in \Cref{lemma:unicita_soluzioni_1}, we obtain a unique admissible solution of $ H''(E)=0$ equal to
    $$
    E^*(c) = \sqrt{\frac{c ^4-3 c ^3-9 c ^2-\sqrt{c ^8+18 c ^7+15 c ^6-36
   c ^5-33 c ^4+18 c ^3+17 c ^2}-5 c }{3 c ^2-6 c -1}}.
    $$
    Substituting the above expression into $H'(E)$ and imposing that the first derivative vanishes, we numerically solve $H'(E) = 0$ with respect to $c$ and we get (using Mathematica as numerical software) that there exists a unique zero at $c=c^*\sim 0.0257224$. Furthermore, we notice that $H'''(E)\neq 0$ for $E=E^*(c^*)$, implying that $E^*(c^*)$ is an inflection point with horizontal tangent. Finally, if  $c\geq c^*$ there exists a unique $E = E(c)$ solving $\eqref{trascr}$.

\begin{appendix}
    \section{Derivation of $\mathcal{F}_c$}
\label{app:derivazione}
    
In this Appendix, we derive the expression of the energy functional $\mathcal{F}_c$ starting from 
\begin{equation}\label{funz_app}
    \mathcal{E}(\n, S)=\bigintsss_S \left(\gamma+\frac{\kappa}{2}|\nabla_S \n|^2\right)\,dA=
    \bigintsss_S \left(\gamma+\frac{\kappa}{2}\left(|\tens D \n|^2+|\tens L \n|^2\right)\right)\,dA
\end{equation}
and specializing \eqref{funz_app} in the setting of a revolution surface $S$ spanning two coaxial rings of radius $r$ placed at distance $2h$.
Precisely, since $\nabla_S\n=\tens D \n+\boldsymbol{\nu}\otimes \tens L \n$, where $\boldsymbol{\nu}$ is the normal vector to $S$, we immediately end up with
\begin{equation}
\label{e:exp_gradiente_superficiale}
    \abs{\nabla_S \n}^2 = \abs{\tens{D}\n}^2+ \abs{\tens{L}\n}^2,
\end{equation}
where $\tens D \n$ is the covariant derivative of $\n$, while $\tens L$  is the extrinsic curvature tensor.

To compute these terms we proceed by parametrizing the revolution surface $S$. Let 
\[
\vect{Z}:\left\{
\begin{aligned}
    [0,2\pi] \times [-h,h] &\to \R\\
(\varphi,x)&\mapsto \vect{Z}(\varphi,x):= (\rho(x) \cos \varphi,\rho(x)\sin \varphi,x)
\end{aligned}
\right.
\]
where $\rho:[-h,h]\to (0,+\infty)$ is a smooth function satisfying the boundary conditions $\rho(\pm h)=r$. In this setting, we have 
\[
\vect{Z}_\varphi=(-\rho\sin \varphi, \rho \cos\varphi,0), \qquad \vect{Z}_x=(\rho'\cos \varphi, \rho' \sin\varphi,1).
\]
Then, 
\[
dA=|\vect Z_\varphi \times \vect Z_x|d\varphi dx=\rho\sqrt{1+(\rho')^2}\,d\varphi dx.
\]
Since  
\[
\vect{e}_1=\frac{\vect{Z}_\varphi}{|\vect{Z}_\varphi|}, \qquad 
\vect{e}_2=\frac{\vect{Z}_x}{|\vect{Z}_x|}
\]
form an orthonormal basis on the tangent plane to $S$ and since the nematic director $\n$ is a unit vector constrained to be tangent to the surface, we can rewrite $\n$ as 
$$\n=\cos \alpha \vect{e}_1+\sin \alpha \vect{e}_2,$$
where $\alpha=\alpha(x,\varphi) \in [0, \pi]$ is the positive oriented angle formed by the vector $\n$ with $\vect e_1$, which is the direction of parallels.
Since $|\vect Z_\varphi|=\rho$ and $|\vect Z_x|=\sqrt{1+(\rho')^2}$, we explicitly have 
$$
\n=\frac{\cos \alpha}{\rho}\vect{Z}_\varphi+\frac{\sin \alpha}{\sqrt{1+(\rho')^2}}\vect{Z}_x.
$$
The first term $|\tens D\n|^2$ in \eqref{e:exp_gradiente_superficiale} was already computed in \cite[Appendix A]{bllm} and it is given by
\[
|\tens D \n|^2=\frac{\alpha_x^2}{1 + (\rho')^2} + \frac{\alpha_\varphi^2}{\rho^2}-\frac{2\rho'\alpha_\varphi}{\rho^2 \sqrt{1 + (\rho')^2}} +\frac{(\rho')^2}{\rho^2 (1 +(\rho')^2)}.
\]

To compute $\tens L \n$, we proceed as in \cite[Appendix: Geometric preliminaries]{napoli2010equilibrium}. 
The extrinsic curvature tensor $\tens L$ satisfies $\tens L=c_1 \boldsymbol{e}_1\otimes \boldsymbol{e}_1+c_2 \boldsymbol{e}_2\otimes \boldsymbol{e}_2$ where $c_1$ and $c_2$ are the principal curvatures of $S$. Since $S$ is assumed to be a revolution surface, we can apply Meusnier's Theorem to compute $c_1$.
Precisely, we have 
$$c_1=c_P\boldsymbol{\nu}\cdot \boldsymbol{\nu}_P$$
where $c_P$ is the curvature of a parallel on $S$, $\boldsymbol{\nu}$ is the normal vector to the surface and $\boldsymbol{\nu}_P$ is the projection of $\boldsymbol{\nu}$ on the plane of the parallel (i.e. $x=\textup{constant}$). Since parallels of $S$ are circles of radius $\rho$, we have $c_P=\sfrac{1}{\rho}$.
Concerning the normal $\boldsymbol{\nu}$, we can compute it as
$$\boldsymbol{\nu}=\frac{\vect Z_\varphi \times \vect Z_x}{|\vect Z_\varphi \times \vect Z_x|}=\left(\frac{\cos \varphi}{\sqrt{1+(\rho')^2}},\frac{\sin\varphi}{\sqrt{1+(\rho')^2}},-\frac{\rho'}{\sqrt{1+(\rho')^2}}\right).$$
Finally, since $\boldsymbol{\nu}_P$ is its projection on the plane of the parallel, we have
$$
\boldsymbol{\nu}_P=\left(\frac{\cos \varphi}{\sqrt{1+(\rho')^2}},\frac{\sin\varphi}{\sqrt{1+(\rho')^2}},0\right).
$$
Thus, $$c_1=\frac{1}{\rho}\left(\frac{\cos \varphi}{\sqrt{1+(\rho')^2}},\frac{\sin\varphi}{\sqrt{1+(\rho')^2}},-\frac{\rho'}{\sqrt{1+(\rho')^2}}\right)\cdot \left(\frac{\cos \varphi}{\sqrt{1+(\rho')^2}},\frac{\sin\varphi}{\sqrt{1+(\rho')^2}},0\right)=-\frac{1}{\rho\sqrt{1+(\rho')^2}}.$$
The curvature of the meridian, along the direction of $\boldsymbol{e}_2$, is the curvature of a planar curve, that is, see \cite[Chapter 1]{do2016differential}, 
$$c_2=\frac{\rho''}{\left(1+(\rho')^2\right)^{\sfrac{3}{2}}}.$$
Collecting everything, we end up with
$$
\tens L\n=\begin{pmatrix}
    c_1
& 0\\
0 & c_2\end{pmatrix} \begin{pmatrix}
   \n_1\\
\n_2\end{pmatrix}=\begin{pmatrix}
 - \frac{1}{\rho\sqrt{1+(\rho')^2}}  & 0\\
0 & \frac{\rho''}{(1+(\rho')^2)^\frac{3}{2}}
\end{pmatrix}\begin{pmatrix}
    \cos \alpha\\
    \sin \alpha
\end{pmatrix}=\begin{pmatrix}
   - \frac{1}{\rho\sqrt{1+(\rho')^2}}\cos \alpha\\
    \frac{\rho''}{(1+(\rho')^2)^{\sfrac{3}{2}}}
     \sin \alpha
\end{pmatrix}
$$
and in particular 
$$|\tens L\n|^2=c_1^2\cos^2\alpha+c_2^2\sin^2\alpha
=\frac{\cos^2\alpha}{\rho^2\left(1+(\rho')^2\right)}+\frac{(\rho'')^2 \sin^2\alpha}{(1+(\rho')^2)^3}\,.$$
Therefore, by Fubini's Theorem, the energy functional
\eqref{funz_app} reads as
\[
\begin{aligned}
 \mathcal{E}(\n, S)&=\bigintsss_{-h}^h\bigintsss_0^{2\pi}d\varphi dx\left(\gamma\rho\sqrt{1+(\rho')^2}+  \frac{\kappa}{2}\rho\sqrt{1+(\rho')^2} \right.\\
 &\left.\times \left(\frac{\alpha_x^2}{1 + (\rho')^2} + \frac{\alpha_\varphi^2}{\rho^2}-\frac{2\rho'\alpha_\varphi}{\rho^2 \sqrt{1 + (\rho')^2}} +\frac{(\rho')^2}{\rho^2 (1 +(\rho')^2)}
 +\frac{\cos^2\alpha}{\rho^2\left(1+(\rho')^2\right)}+\frac{(\rho'')^2 \sin^2\alpha}{(1+(\rho')^2)^{3}}\right)\right)\\
 &=I_1+I_2+I_3+I_4+ I_5
 \end{aligned}
 \]
 where 
 \begin{equation*}
    \begin{aligned}
 I_1&=2 \pi\gamma\bigintsss_{-h}^h\left(\rho\sqrt{1+(\rho')^2}+\frac{\kappa}{2\gamma}\frac{(\rho')^2}{\rho \sqrt{1 +(\rho')^2}}\right)\,dx,\\
 I_2&=\frac{\kappa}{2}\bigintsss_{-h}^h\bigintsss_{0}^{2 \pi}
 \left(\frac{\cos^2\alpha}{\rho\sqrt{1+(\rho')^2}}+\frac{\rho(\rho'')^2 \sin^2\alpha}{(1+(\rho')^2)^{\sfrac{5}{2}}}\right)\,dxd\varphi, 
\end{aligned}
\end{equation*}
and
\begin{equation*}
    \begin{aligned}
 I_3&=\frac{\kappa}{2}\bigintsss_{-h}^h\frac{\rho}{\sqrt{1 + (\rho')^2}}\bigintsss_{0}^{2 \pi}\alpha_x^2\,d\varphi dx,\\ 
    I_4&=\frac{\kappa}{2}\bigintsss_{-h}^h\frac{\sqrt{1+(\rho')^2}}{\rho}\bigintsss_{0}^{2 \pi}\alpha_\varphi^2\,d\varphi dx, \\
    I_5&=-\frac{\kappa}{2}\bigintsss_{-h}^h\frac{2\rho'}{\rho}\bigintsss_{0}^{2 \pi}\alpha_\varphi\,d\varphi dx. 
\end{aligned}
\end{equation*}
First of all, $I_5=0$: $\n$ is smooth, hence $\alpha(x,0)=\alpha(x,2\pi)$ for any $x\in [-h,h]$. Moreover, in this paper, we choose $\alpha=0$, so that the nematic director $\n$ is aligned along parallels. Then in $\mathcal{E}(\n, S)$, we immediately have that $I_3$ and $I_4$ vanish.
Moreover, $I_2$ simplifies since $\sin \alpha = 0$ while $\cos \alpha=1$ getting that the energy functional is geometric: it depends just on the profile function $\rho$ and up to $2\pi\gamma$, it reduces to 
$$
\begin{aligned}
 \mathcal{E}(\rho)&=\bigintsss_{-h}^h\left(\rho\sqrt{1+(\rho')^2}+c\left(\frac{(\rho')^2}{\rho \sqrt{1 +(\rho')^2}}+\frac{1}{\rho\sqrt{1+(\rho')^2}}\right)\right)\,dx\\
 &=\bigintsss_{-h}^h\sqrt{1+(\rho')^2}\left(\rho+\frac{c}{\rho}\right)\,dx=:\mathcal{F}_c(\rho),   
\end{aligned}
$$ 
having defined $c := \frac{\kappa}{2 \gamma}$.

\end{appendix}

\bigskip
\bigskip

\section*{Acknowledgements}
The authors thank Paolo Tilli for his advices in the proof of \Cref{tilli2} and \Cref{tilli1} and they thank Gaetano Napoli and Luigi Vergori for having put the problem to their attention.\\
GB acknowledges the MIUR Excellence Department Project awarded to the Department of Mathematics, University of Pisa, CUP I57G22000700001. 
CL and AM acknowledge Gruppo Nazionale per la Fisica Matematica (GNFM) of Istituto Nazionale di Alta Matematica (INdAM).

\section*{Funding}
\begin{itemize}
\item GB is supported by European Research Council (ERC), under the European Union's Horizon 2020 research and innovation program, through the project ERC VAREG - {\em Variational approach to the regularity of the free boundaries} (grant agreement No. 853404).
\item GB and LL are supported by Gruppo Nazionale per l'Analisi Matematica, la Probabilit\`a e le loro Applicazioni (GNAMPA) of Istituto Nazionale di Alta Matematica (INdAM) through the INdAM-GNAMPA project 2025 CUP E5324001950001.
\item CL is supported by the MICS (Made in Italy – Circular and Sustainable) Extended Partnership and received funding from the European Union Next-Generation EU (PIANO NAZIONALE DI RIPRESA E RESILIENZA (PNRR) – MISSIONE 4 COMPONENTE 2, INVESTIMENTO 1.3 – D.D. 1551.11-10-2022, PE00000004) and by the Italian Ministry of Research, under the complementary actions to the NRRP “D34Health - Digital Driven Diagnostics, prognostics and therapeutics for sustainable Health care” Grant (\# PNC0000001). 
\end{itemize}

\section*{Conflict of interest and data availability statements}
The authors declare that they have no known competing financial interests or personal relationships that could have appeared to influence the work reported in this paper. Moreover,  no additional data are included.

\printbibliography
\end{document}